\newtheorem{thm}{Theorem}[section]
\newtheorem{prop}[thm]{Proposition}
\newtheorem{coro}[thm]{Corollary}
\newtheorem{lemma}[thm]{Lemma}
\newtheorem{rem}[thm]{Remark}
\newtheorem{hypo}{Hypothesis}[section]
\newcommand*{\plim}[1][]{%
	\if\relax\detokenize{#1}\relax
	\def\next{\qopname\relax m{lim}}%
	\else
	\def\next{\qopname\newmcodes@ m{#1-lim}}%
	\fi
	\next
}
\newcommand*{\psum}[1][]{%
	\DOTSB
	\if\relax\detokenize{#1}\relax\else
	\operatorname{#1-}\mkern-\thinmuskip
	\fi
	\sum@\slimits@
}
\DeclareMathOperator\supp{supp}
\newcommand{\R}{\mathbb{R}}             
\newcommand{\N}{\mathbb{N}}             
\newcommand{\Z}{\mathbb{Z}}             
\newcommand{\C}{\mathbb{C}}             
\newcommand{\half}{\frac{1}{2}}
\newcommand {\clb}{\color{blue}}
\newcommand {\clr} {\color{red}}
\newcommand{\n}{n-\nu}
\numberwithin{equation}{section}
\title{On the magnetic Dirichlet to Neumann operator on the  exterior of the disk -  diamagnetism, weak-magnetic field limit and flux effects}
\author{Bernard Helffer  and Fran{\c{c}}ois Nicoleau}
\begin{document}

\maketitle

\begin{abstract}
In this paper, we analyze the magnetic Dirichlet-to-Neumann operator (D-to-N map)  $\check \Lambda(b,\nu)$ on the exterior of the disk with respect to a magnetic potential $A_{b, \nu}=A^b + A_\nu$ where, for $b\in \mathbb R$ and $\nu \in \mathbb R$, 
$A^b (x,y)= b\,  (-y, x)$ and $A_\nu (x,y)$ is  the Aharonov-Bohm potential centered at the origin of flux $2\pi \nu$.  First,  we show that the limit of $\check \Lambda(b,\nu)$ as $b\rightarrow 0$ is equal to the D-to-N map $\widehat \Lambda (\nu)$ on the interior of the disk associated with the potential $A_\nu (x,y)$. Secondly, we study the ground state energy of the D-to-N map $\check \Lambda(b,\nu)$ and show that the strong diamagnetism property holds.  Finally we slightly extend to the exterior case the asymptotic results obtained in the interior case for general domains.
 \end{abstract}

\newpage 
\tableofcontents



\section{Introduction}

In this paper, we study the weak-field limit and the ground state energy of the magnetic Dirichlet to Neumann operator\footnote{See \cite{GP} for a general introduction.}  (in what follows D-to-N map) in the case  of a magnetic potential with a constant  associated magnetic field. When $\Omega$
 is not simply connected  (which is the case of the exterior of the disk), this leads us to consider a family of potentials  with  one potential of the family different to the  other by an  Aharonov-Bohm (A-B)  potential 
$A_\nu (x,y)$, centered in the complementary of $\overline{\Omega}$ which is assumed to be connected. For  the exterior on the unit disk $D(0,1) \subset \R^2_{x,y}$ we consider the A-B potentials centered at $0$.

\vspace{0.1cm}\noindent
First, let us recall some basic facts on the magnetic D-to-N map in the case of a bounded domain  $\Omega \subset \R^2$, with smooth boundary. For any $ u \in \mathcal D'(\Omega)$, 
the magnetic Schr\"odinger operator  on $\Omega$ is defined as 
\begin{equation}\label{defMagOp}
	H_{A}\  u = (D-A)^2 u = -\Delta u +2i \   A \cdot \nabla u  + (A^2 + i \  {\rm{div}} \ A ) u,
\end{equation}
where   $D= -i \nabla$,  $-\Delta$ is the usual positive Laplace operator on $\R^2$ and $A = (A_1, A_2)$ is the magnetic potential vector field.  When considered as a $1$-form, we write  $\omega_A=  A_1\ dx  + A_2\ dy$ 
 and the magnetic field is given by the $2$-form $\sigma_B :=d\omega_A$.  We  write $\sigma_B = B \, dx\wedge dy$, which permits to identify the $2$-form $\sigma_B$ with the function $B$.

\vspace{0.1cm}\noindent
The boundary value  problem 
\begin{equation}  \label{Dirichletdisque}
	\left\{
	\begin{array}{rll}
		H_{A} \  u &=&0  \  \ \rm{in}  \ \ \Omega,\\
		u_{  \vert \partial \Omega} & =& f \in H^{1/2}(\partial\Omega) ,
	\end{array}\right.
\end{equation}
has a unique solution $u \in H^1(\Omega)$ since zero does not belong to the spectrum of the Dirichlet realization  of  $H_{A}$.   Then, the  D-to-N map, is defined  by
\begin{equation} \label{D-to-N--map}
	\begin{array}{rll}
		\Lambda_{A} :   H^{1/2}(\partial\Omega)& \longmapsto & H^{-1/2}(\partial\Omega) \\ 
		f   &\longmapsto&  \left(\partial_{\vec{\nu}} \, u + i \langle A, \vec{\nu} \rangle \ u  \right)_{  \vert \partial \Omega} ,
	\end{array}
\end{equation}
where $\vec{\nu}$ is the outward normal unit vector field on $\partial\Omega$.  More precisely, we define the D-to-N map using the equivalent weak formulation: 
\begin{equation}\label{DtNweak}
	\left\langle \Lambda_{A} f , g \right \rangle_{H^{-1/2}(\partial \Omega) \times H^{1/2}(\partial \Omega)} = \int_\Omega  \langle (D-A)u , (D-A)v \rangle\ dx\,,
\end{equation}
for any $g \in H^{1/2}(\partial \Omega)$ and $f \in H^{1/2}(\partial \Omega)$ such that $u$ is the unique solution of (\ref{Dirichletdisque}) and $v$ is any element of $H^1(\Omega)$ 
so that $v_{|\partial \Omega} = g$. Clearly, the D-to-N map is a positive operator.

\vspace{0.1cm}\noindent
We recall (see \cite{GP,HKN} and references therein) that the spectrum of the D-to-N operator is discrete and is given by an increasing sequence of eigenvalues 
\begin{equation} \label{spectrum}
	0 \leq 	\mu_1 \leq \mu_2 \leq ... \leq \mu_n \leq ...  \to + \infty\,. 
\end{equation}

\vspace{0.3cm}
\noindent
When $\Omega$ is an unbounded open set, the definition of the magnetic D-to-N map  is not quite as simple as in the case of bounded domains. As far as we know, for compactly supported magnetic fields,  the  D-to-N operator in the half-space $\R^3_+$  was well defined  in (\cite{Li}, Appendix B) using the Lax-Phillips method, and  also on an infinite slab $\Sigma$ in \cite{KrUh}. At last, for non-compactly supported electromagnetic fields, the  D-to-N map on an unbounded open set $\Omega \subset \R^3$ corresponding to a closed waveguide was studied in \cite{Ki}.  We should describe further the definition in the next section  in the case of a non vanishing magnetic field at infinity.

\vspace{0.2cm}\noindent
In this paper, we consider the following family of magnetic $1$-forms  in the exterior of the unit disk $\Omega = R^2\setminus D(0,1)$  defined as:
\begin{equation}\label{constantbAB}
	A_{b, \nu}(x,y) = A^b(x,y) + 	A_\nu (x,y) \,.
\end{equation}
The first magnetic potential appearing in the (RHS) of (\ref{constantbAB}) is 
\begin{equation}
A^b(x,y) = b\, (-y, x),
\end{equation}
 where $b$ is a fixed real constant. The associated magnetic field is constant  of strength $2b$. \\ The second magnetic potential is the  so-called magnetic Aharonov-Bohm-potential  defined as:
\begin{equation}\label{defpotAB}
	A_{\nu} (x,y) = \frac{\nu}{r^2} (-y,x)\quad , \quad  \nu \in \mathbb R\,.
\end{equation}
This magnetic potential $	A_\nu$ creates a flux $2\pi \nu$ around the origin, and in the distributional sense we have:

\begin{equation}
	{\rm curl}\,A_\nu = 2\nu \ \delta_0\,.
\end{equation}

Clearly, the magnetic two-form $d \omega_{ A_{\nu,b}}$ satisfies
\begin{equation}
 d \omega_{ A_{b, \nu} }= 2b\ dx\wedge dy  \mbox{ in }\Omega\,.
\end{equation}
Note also that the magnetic potential $ A_{b,\nu}$ satisfies the Coulomb gauge  condition  
 \begin{equation}
 {\rm div} A_{b, \nu}=0 \mbox{  and } \langle  A_{b, \nu}, \vec{\nu} \rangle =0 \mbox{ on } S^{1}, \mbox{ (the boundary of }\Omega)\,.
\end{equation}
At last, thanks to a natural gauge invariance, we can always assume that $\nu \in (-\half, \half]$.



\vspace{0.2cm}\noindent
Note that the  D-to-N map $\widehat \Lambda(\nu)$ associated with the magnetic potential $A_\nu(x,y)$ in the interior of the unit disk $D(0,1)$ was computed explicitly in \cite{CPS}. In particular, the authors show that the spectrum of $\widehat \Lambda(\nu)$ is given by:
\begin{equation}
	\lambda_k (\nu) =|k-\nu | \ \mbox{ for } \ k\in \mathbb Z \,.
\end{equation}
Except for $\nu = \frac 12$, each eigenvalue has multiplicity $1$.

\vspace{0.3cm}
The first goal of this paper is to rigorously define, when $b\neq 0$,  the D-to-N map  $\check\Lambda(b, \nu)$ associated with the potential $ A_{b, \nu} (x,y)$ on $\Omega$, (i.e, in the exterior of $D(0,1)$). This will actually be defined  in the next section  for a rather general unbounded domains with bounded boundary and rather general non vanishing magnetic fields. We shall denote $\check \lambda^{DN} (b,\nu)$ its ground state energy. 

\vspace{0.2cm}\noindent
Then, in parallel with the analysis by Chaigneau and Grebenkov \cite{CG} of the limit as $p\rightarrow 0$ of the D-to-N problem associated with $-\Delta +p$ on the exterior of a compact set, we study the weak-field limit (i.e. the limit as $b\rightarrow 0$) of the D-to-N map  $\check \Lambda(b, \nu)$ and  get:

\begin{thm}\label{weaklimitintro}
	For any $b>0$ and any  $\nu \in (-\half,\half]$,  $\check \Lambda(b,\nu)-\widehat \Lambda(\nu) \in {\cal{B}}(L^2(S^{1}))$. Moreover, we have as 
	$b \to 0^+\,$: 
	\begin{eqnarray}
		|| \check \Lambda(b,\nu)- \widehat \Lambda(\nu)||_{ {\cal{B}}(L^2(S^{1})) } &=& \mathcal O(\frac{1}{|\log b|}) \quad  ,  \quad {{\rm if}} \ \nu =0, \\
		 &=& \mathcal O(b^{|\nu|}) \quad \quad \  , \quad {{\rm if}} \ \nu \not=0.
	\end{eqnarray}
\end{thm}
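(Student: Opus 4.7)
\medskip
\noindent
\textbf{Proof strategy.}
The starting point is a separation of variables in polar coordinates $(r,\theta)$. In the Coulomb gauge used here, $A_{b,\nu}$ has only an angular component, $A_\theta(r)=br+\nu/r$; consequently both the exterior operator $\check\Lambda(b,\nu)$ and the interior operator $\widehat\Lambda(\nu)$ are simultaneously diagonalised by the Fourier basis $\{e^{ik\theta}\}_{k\in\Z}$ of $L^2(S^1)$. Writing a candidate harmonic extension as $u(r,\theta)=f_k(r)e^{ik\theta}$ and setting $m:=k-\nu$, the equation $H_{A_{b,\nu}}u=0$ reduces on $\Omega$ to the radial problem
\begin{equation*}
-f_k''-\frac{1}{r}f_k'+\left[\frac{m^2}{r^2}+b^2r^2-2bm\right]f_k=0, \qquad r>1,
\end{equation*}
whose potential is coercive at $r=+\infty$ whenever $b\neq 0$; there is thus a one-dimensional space of decaying solutions, which I normalise by $f_k(1)=1$. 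Since the outward unit normal of $\Omega$ on $S^1$ is $-\partial_r$ and $\langle A_{b,\nu},\vec\nu\rangle=0$ there,
\begin{equation*}
\check\Lambda(b,\nu)\,e^{ik\theta}=-f_k'(1)\,e^{ik\theta}, \qquad \widehat\Lambda(\nu)\,e^{ik\theta}=|m|\,e^{ik\theta},
\end{equation*}
and the operator norm of the difference on $L^2(S^1)$ equals $\sup_{k\in\Z}\bigl|{-f_k'(1)}-|m|\bigr|$.

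\medskip
The next step is to solve the radial ODE explicitly. The substitution $t=br^2$, $f_k(r)=r^{|m|}e^{-br^2/2}w(t)$ turns it into Kummer's confluent hypergeometric equation $tw''+(c-t)w'-aw=0$ with $c=|m|+1$ and $a=(|m|-m+1)/2$; the unique (up to scalar) decaying solution is the Tricomi function $w=U(a,c;t)$. A direct computation of $f_k'(1)$ gives the closed form
\begin{equation*}
-f_k'(1)=|m|+b-2b\,\frac{U'(a,c;b)}{U(a,c;b)}.
\end{equation*}
Inserting the classical small-$t$ behaviour $U(a,c;t)\sim \Gamma(c-1)\Gamma(a)^{-1}t^{1-c}$ for $c>1$ (i.e.\ $m\neq 0$), which yields $U'/U=(1-c)/t+\mathcal O(t^{\,c-2})$, cancels the leading $-|m|$ and leaves a correction of order $b^{|m|}$. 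In the exceptional case $m=0$ (which occurs only if $\nu=0$ and $k=0$), $c=1$ and the expansion becomes logarithmic, $U(a,1;t)=-\Gamma(a)^{-1}(\log t+\psi(a)+2\gamma)+o(1)$, producing the $\mathcal O(1/|\log b|)$ rate. Since $\nu\in(-\half,\half]$, the infimum of $|m|=|k-\nu|$ over $k\in\Z$ is attained at $k=0$ and equals $|\nu|$, which yields the two regimes of the statement.

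\medskip
The main obstacle is to upgrade these mode-by-mode estimates to a bound that is uniform in $k$, as required for operator-norm convergence. For $|k|$ in any fixed bounded range this follows at once by continuity of the finitely many Tricomi functions involved in their parameters. For $|k|$ large one has two natural routes: either invoke uniform asymptotics of $U(a,c;t)$ as $c\to\infty$ — which show that $2b\,U'(a,c;b)/U(a,c;b)$ equals $-2|m|+\mathcal O(b/|m|)$, so the correction is actually $\mathcal O(b^2/|m|)$ — or run a Sturm-type comparison directly on the radial ODE, using that when $|m|$ is large the centrifugal term $m^2/r^2$ dominates the perturbation $b^2r^2-2bm$ on a neighbourhood of $r=1$ and forces $f_k$ to stay close to the pure-power $b=0$ decaying solution $r^{-|m|}$, so that $-f_k'(1)$ stays close to $|m|$ to any desired order. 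Combining the small-$|k|$ asymptotics with this large-$|k|$ tail bound produces the uniform operator-norm estimates stated in the theorem; the delicate point throughout is that the Kummer parameters $(a_k,c_k)$ both depend on $k$, so one must either locate uniform special-function bounds in the literature or bypass them via the direct ODE comparison.
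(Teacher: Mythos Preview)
Your outline is essentially the paper's proof: Fourier diagonalisation on $S^1$, the explicit Tricomi solution of the radial ODE, the small-$b$ expansion of $U$ for each fixed mode (logarithmic when $c=1$, power-law otherwise), and a uniform-in-$k$ tail bound for large $|k|$. Two cosmetic slips worth fixing: the displayed formula should read $-f_k'(1)=-|m|+b-2b\,U'/U$ (your own phrase ``cancels the leading $-|m|$'' shows this is what you meant), and the large-$|k|$ correction you quote is $\mathcal O(b)$, not $\mathcal O(b^2/|m|)$; either suffices for the theorem. For the tail the paper takes exactly your first route, making it concrete by citing Temme's Laplace-method asymptotic for $U(\tfrac12,n+1,z)$, valid uniformly for $z\in(0,1]$, and taking the logarithmic derivative to get $n/z+U'/U=1+\mathcal O(1/n)$, hence $\check\lambda_n-n=\mathcal O(b)$ for large $n$. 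A minor cosmetic difference: the paper uses the substitution $r^{\,n-\nu}$ (so $a=\tfrac12$ throughout and $c=n-\nu+1$), whereas you use $r^{|m|}$; the two are related by the standard Kummer transformation and lead to the same estimates.
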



\vspace{0.2cm}\noindent
Actually,  when $\nu=0$,  we can get a more accurate asymptotic estimate for the difference  $\check \lambda_n (b)- |n|$, 
where,  for $n \in \Z$, $\check \lambda_n (b)$ denote the eigenvalues of the D-to-N map  $\check\Lambda(b, 0)$:


\begin{prop}
When $b \to 0^+$, one has :	
\begin{eqnarray}
     \check \lambda_0(b) &=& -\frac{2}{\log b} + \mathcal O(\frac{1}{(\log b)^2}) , \\
	\check \lambda_n (b) -|n|  &=& b \log b  + \mathcal O(b) \ ,\ |n| =1 ,\\ 
	\check \lambda_n (b) - |n| &=& - \frac{n}{|n|-1} \ b + \mathcal O(b^2) \ ,\ |n| \geq 2\,.
\end{eqnarray}

\end{prop}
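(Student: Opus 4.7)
The plan is to exploit the rotational symmetry of $A^b$ and reduce the eigenvalue problem to a family of radial ODEs solvable in terms of confluent hypergeometric functions. Since the D-to-N operator commutes with rotations, its eigenfunctions are $e^{in\theta}$ with $n\in\Z$. Writing $u_n(r,\theta) = R_n(r)\, e^{in\theta}$ in polar coordinates, where $A^b = br\,\hat\theta$ and $H_{A^b} = -\Delta + 2ib\,\partial_\theta + b^2 r^2$, the equation $H_{A^b} u_n = 0$ reduces to
\[
R_n'' + \frac{1}{r}R_n' + \left(-\frac{n^2}{r^2} + 2bn - b^2 r^2\right)R_n = 0 .
\]
The substitution $R_n(r) = r^{|n|}\, e^{-br^2/2}\, W(br^2)$ followed by $s = br^2$ transforms this into Kummer's confluent hypergeometric equation
\[
s\,W'' + (c_n - s)\,W' - a_n W = 0 ,\quad a_n = \tfrac{|n|-n+1}{2} ,\quad c_n = |n|+1 .
\]
Admissibility of $u_n$ at infinity (in the function space used to define $\check\Lambda(b,0)$ in the previous section) selects the Tricomi function, since the Kummer function $M$ would yield an $e^{br^2/2}$ blow-up. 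Hence $R_n(r) = r^{|n|}\, e^{-br^2/2}\, U(a_n, c_n, br^2)$.

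Because $\vec{\nu} = -\hat r$ on $\partial\Omega$ and $\langle A^b, \vec{\nu}\rangle$ vanishes on $S^1$, the $n$-th eigenvalue is
\[
\check\lambda_n(b) = -\frac{R_n'(1)}{R_n(1)} = -|n| + b - 2b\,\frac{U'(a_n, c_n, b)}{U(a_n, c_n, b)} .
\]
The three regimes in the statement now correspond exactly to the three regimes of the small-argument behavior of $U(a, c, \cdot)$. For $|n| \geq 2$, $c_n \geq 3$ is a non-critical integer and a Frobenius-type computation gives
\[
U(a_n, c_n, s) = \frac{(|n|-1)!}{\Gamma(a_n)}\left[s^{-|n|} + \frac{a_n - |n|}{1-|n|}\, s^{1-|n|}\right] + O(s^{2-|n|} \log s) ,
\]
from which $U'/U\,(b) = -|n|/b + (a_n-|n|)/(1-|n|) + O(b)$. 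Plugging in and using $2a_n = |n|-n+1$, the coefficient of $b$ in $\check\lambda_n(b) - |n|$ simplifies to $-n/(|n|-1)$, as claimed.

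For $|n| = 1$ the integer $c_n = 2$ is critical, so the small-$s$ expansion of $U(a_n, 2, s)$ carries a logarithmic term (DLMF 13.2.19),
\[
U(a_n, 2, s) = \frac{1}{\Gamma(a_n)}\, s^{-1} + \frac{1}{\Gamma(a_n - 1)}\, \log s + O(1) ,
\]
whose insertion into the formula for $\check\lambda_n(b)$ produces the announced $b\log b + O(b)$ correction. For $n=0$, $c_n = 1$ and the Tricomi function is itself logarithmic at the origin,
\[
U(\tfrac12, 1, s) = -\pi^{-1/2}\bigl[\log s + \gamma + \psi(\tfrac12)\bigr] + O(s\log s) ,
\]
while the derivative identity $\tfrac{d}{ds}U(a, c, s) = -a\, U(a+1, c+1, s)$ combined with the $|n|\geq 2$-type expansion applied to $U(3/2, 2, s) \sim 2\pi^{-1/2} s^{-1}$ yields $U'(\tfrac12, 1, s) \sim -\pi^{-1/2}\, s^{-1}$. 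Consequently $2b\, U'/U|_{s=b} = 2/(\log b + O(1)) = 2/\log b + O(1/\log^2 b)$, and since $b = o(1/\log^2 b)$ as $b \to 0^+$, the first line of the statement follows.

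The main difficulty is the careful tracking of the logarithmic singularities of $U(a, c, \cdot)$ when $c$ is a positive integer, most delicate for $n = 0$ where the leading behavior is itself logarithmic and one must control the next-order remainder to establish the claimed $O(1/\log^2 b)$ error. Once the appropriate number of terms in the small-$s$ expansion of $U$ is secured from the standard tables, the remainder of the argument is purely algebraic.
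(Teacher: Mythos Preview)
Your approach is essentially the same as the paper's: derive the explicit formula $\check\lambda_n(b) = -|n| + b - 2b\,U'(a_n,c_n,b)/U(a_n,c_n,b)$ via separation of variables and selection of the Tricomi solution, then read off the asymptotics from the known small-argument expansions of $U(a,c,\cdot)$. The only cosmetic difference is that you parametrize with $(a_n,c_n)=\bigl(\tfrac{|n|-n+1}{2},\,|n|+1\bigr)$ to treat both signs of $n$ at once, whereas the paper works with $(a,c)=(\tfrac12,n+1)$ for $n\geq 0$ and invokes the symmetry $\check\lambda_n(b)=\check\lambda_{-n}(-b)$ for negative $n$.

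Two small points deserve more care. First, for $|n|=1$ you stop at ``produces the announced $b\log b+\mathcal O(b)$ correction'' without actually computing the coefficient. If you carry the computation through with your own formulas, you find the coefficient of $b\log b$ is $-2(a_n-1)$, which equals $+1$ for $n=1$ but $-1$ for $n=-1$; so the leading correction for $n=-1$ is $-b\log b$, and the statement as written is slightly imprecise for that case. Second, in the $|n|\geq 2$ computation your claimed remainder $U'/U=-|n|/b+(a_n-|n|)/(1-|n|)+\mathcal O(b)$ is only $\mathcal O(b\log b)$ when $|n|=2$, because the logarithmic part of the Tricomi expansion enters exactly at that order; consequently the remainder in $\check\lambda_n(b)-|n|$ is $\mathcal O(b^2\log b)$ rather than $\mathcal O(b^2)$ for $|n|=2$. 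For $|n|\geq 3$ the second finite sum in the expansion of $U$ supplies another power term before the logarithm appears, and your $\mathcal O(b^2)$ is then correct.
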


\begin{rem}
In the same way, when $\nu \not=0$, we can theoretically estimate the difference $\check \lambda_n (b,\nu)- |n-\nu|$   
where  $\check \lambda_n (b, \nu )$ denote the eigenvalues of the D-to-N map  $\check\Lambda(b, \nu)$, but this leads to very cumbersome computations even with the help of a computer. Nevertheless, in the particular case  $n=0$, one can get:  
\begin{equation} 
	\check \lambda_0(b,\nu) - |\nu| =  \frac{2 \ \Gamma(1-|\nu|) \ \Gamma(|\nu|+\half)}{\sqrt{\pi} \ \Gamma (|\nu|)} \  b^{|\nu|} +  \mathcal O(b^{2|\nu|})\,.
\end{equation}
\end{rem}


\vspace{0.2cm}\noindent
Our second result is concerned by strong diamagnetism. We recall that by diamagnetism we mean that $\check \lambda^{DN} (b,\nu) $ is minimal for $b=0$. This result has been proved in full generality in \cite{EO}. 

\vspace{0.2cm}\noindent
In the particular case of the exterior of the disk
we  prove (the analogous result for the disk was proven in \cite{HN}) the stronger result:

\begin{thm}
	For any fixed $\nu \in (-\half,\half]$, the map $b \mapsto \check \lambda^{DN} (b,\nu) $ is increasing on $(0,+\infty)$. 
\end{thm}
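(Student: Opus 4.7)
The plan is to reduce the problem to a family of 1D ODE problems via Fourier decomposition, derive a Feynman-Hellmann identity for each eigenvalue branch, and then conclude strict monotonicity of the ground state by a Riccati-type argument. By rotational symmetry of both $\Omega$ and $A_{b,\nu}$, the operator $\check\Lambda(b,\nu)$ commutes with rotations and hence acts as multiplication by a constant $\check\lambda_n(b,\nu)$ on each Fourier mode $e^{in\theta}$, $n\in\Z$. Each $\check\lambda_n(b,\nu)=-u_n'(1)$, where $u_n$ is the unique solution on $r>1$ of
\[
-u_n''-\frac{u_n'}{r}+\frac{(n-\nu-br^2)^2}{r^2}u_n=0,\quad u_n(1)=1,\quad u_n(r)\to 0\ \text{as}\ r\to+\infty,
\]
and $\check\lambda^{DN}(b,\nu)=\inf_{n\in\Z}\check\lambda_n(b,\nu)$.

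Applying Feynman-Hellmann to each branch and then integrating by parts twice---once using $\partial_r[(n-\nu-br^2)^2]=-4br(n-\nu-br^2)$, and once using the ODE rewritten as $(n-\nu-br^2)^2 u_n=r(ru_n')'$---I would obtain the key identity
\[
\frac{d\check\lambda_n}{db}(b,\nu)=\frac{\check\lambda_n(b,\nu)^2-(n-\nu-b)^2}{2b},\quad b>0.
\]
The two boundary contributions produce $(n-\nu-b)^2/(4b)$ (from $u_n(1)=1$) and $-\check\lambda_n^2/(4b)$ (from $u_n'(1)=-\check\lambda_n$, together with $(ru_n')(r)\to 0$ at infinity), and combining them with the overall prefactor $-2$ from Feynman-Hellmann produces the identity above.

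The function $\check\lambda^{DN}$ is the infimum of the family $\{\check\lambda_n\}_{n\in\Z}$ of real-analytic functions of $b$; since $\check\lambda_n(b,\nu)\to+\infty$ as $|n|\to+\infty$ for fixed $b>0$, this infimum is locally a minimum over finitely many indices and is therefore locally Lipschitz on $(0,+\infty)$. At each point of differentiability, writing $n_\ast(b)$ for the active mode, the identity yields $(\check\lambda^{DN})'(b)=(\check\lambda^{DN}(b)^2-(n_\ast(b)-\nu-b)^2)/(2b)$, so strict monotonicity reduces to the pointwise bound $\check\lambda^{DN}(b,\nu)>|n_\ast(b)-\nu-b|$. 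For small $b>0$ the active mode is $n_\ast=0$, and the expansions from the previous proposition and remark give $\check\lambda_0(b,\nu)-|b+\nu|>0$, because the leading correction of $\check\lambda_0$ to $|\nu|$ is of order $b^{|\nu|}$ (or of order $-1/\log b$ when $\nu=0$), each dominating the linear term $b$. Should the inequality first fail at some $b_0>0$, the identity would force $\check\lambda_0'(b_0)=0$; the sign of $\tfrac{d}{db}(\check\lambda_0-|b+\nu|)$ at $b_0$ then drives $\check\lambda_0-|b+\nu|$ strictly below zero for $b>b_0$, hence $\check\lambda_0$ is strictly decreasing and must approach zero from above; but as $\check\lambda_0\to 0^+$ the identity yields $\check\lambda_0'\to -\infty$, contradicting the positivity of the D-to-N map.

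The main obstacle is handling the possibility that $n_\ast(b)$ changes as $b$ varies. Explicit computations via the substitution $s=br^2$ reduce the $n$-th ODE to Kummer's equation with parameters $a=(|n-\nu|+1-(n-\nu))/2$ and $c=|n-\nu|+1$, and express $\check\lambda_n(b,\nu)$ in terms of the Tricomi function $U(a,c,b)$; from these expressions one sees that, at least for $\nu=0$, the active mode does switch from $n=0$ to higher modes as $b$ grows. I would overcome this by running the Riccati-type contradiction argument of the previous paragraph separately on each maximal interval where a given mode is active, and by verifying at each crossing $b_0$ that the common value $\check\lambda_n(b_0,\nu)=\check\lambda_{n'}(b_0,\nu)$ strictly dominates both $|n-\nu-b_0|$ and $|n'-\nu-b_0|$, so that $\check\lambda^{DN}$ remains strictly increasing across $b_0$. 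This last verification relies on quantitative Tricomi-function estimates and is the most technically demanding step.
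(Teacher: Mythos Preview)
Your Riccati identity
\[
\frac{d\check\lambda_n}{db}=\frac{\check\lambda_n^2-(n-\nu-b)^2}{2b}
\]
is correct and is in fact algebraically the same object as the paper's formula \eqref{prime}: writing $U'/U=-(\check\lambda_n+n-\nu-b)/(2b)$ from \eqref{explicitvpextAB} and inserting it into \eqref{prime} yields exactly your expression. So the starting point is shared; the divergence is in how the mode crossings are handled.

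The step you flag as ``most technically demanding'' is not merely demanding --- as formulated it is \emph{false}. You propose to check that at a crossing $b_0$ between modes $n$ and $n+1$ the common value strictly dominates both $|n-\nu-b_0|$ and $|n+1-\nu-b_0|$. But the paper's $(G)$-formula \eqref{eq:magic} (which you do not derive) gives $\check\lambda_n(\check z_n,\nu)=\check\lambda_{n+1}(\check z_n,\nu)=(n+1-\nu)-\check z_n=|n+1-\nu-\check z_n|$ exactly. Hence $\check\lambda_{n+1}'(\check z_n,\nu)=0$: the incoming mode sits precisely on the barrier, not strictly above it, and the right-derivative of $\check\lambda^{DN}$ at each crossing vanishes. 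Your induction scheme therefore cannot restart the Riccati comparison on the next interval from a strict inequality at the left endpoint.

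The paper sidesteps this by going one step further with the contiguous relations for $U$: it factors
\[
\check\lambda_n'(z)=-2\,\frac{U'(\tfrac12,n+1,z)\,U(-\tfrac12,n,z)}{U(\tfrac12,n+1,z)^2},
\]
and then observes that $z\mapsto U(-\tfrac12,n,z)$ is strictly increasing (its derivative is $\tfrac12 U(\tfrac12,n+1,z)>0$ by the integral representation) with a \emph{unique} zero, which Proposition~\ref{characteriztionext} identifies with the crossing point $\check z_{n-1}$. This single special-function fact replaces your entire crossing analysis: it shows in one stroke that the unique minimum of $\check\lambda_n$ occurs at $\check z_{n-1}$, so that on $[\check z_{n-1},\check z_n]$ --- exactly the interval where $\check\lambda_n$ realizes $\check\lambda^{DN}$ --- the branch is increasing. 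Your Riccati identity is the right object, but without the $(G)$-formula and the factorization via $U(-\tfrac12,n,\cdot)$ you are missing the structural link between the critical points of one branch and the crossings with its neighbour, which is what makes the argument close.
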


\vspace{0.1cm}\noindent
Finally, in continuation of our work \cite{HN}, we study the ground state energy of the D-to-N map $\check \Lambda(b,\nu)$
and  extend to $\nu \in (-\half,\half]$ the result of \cite{HKN} (which was proven when $\nu=0$ with another approach).
\begin{thm}\label{main}
	For any fixed $\nu \in (-\half,\half]$, one has the asymptotic expansion as $b \to + \infty$,
	\begin{equation}
	\check \lambda^{DN} (b,\nu) = \alpha b^{1/2}  + \frac{ \alpha^2 +2}{6} + \mathcal O_\nu (b^{-1/2})\,,
	\end{equation}
	where $-\alpha$ is the unique negative zero  of the so-called parabolic cylinder function $D_{\half} (z)$ .
\end{thm}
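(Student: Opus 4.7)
The plan is to reduce to a one-dimensional radial problem by separation of variables in polar coordinates, and then carry out a semiclassical expansion of the resulting family of ODEs as $b\to+\infty$. In the chosen Coulomb gauge the magnetic one-form reads $\omega_{A_{b,\nu}}=(br^{2}+\nu)\,d\theta$, so $H_{A_{b,\nu}}$ commutes with rotations and $\check\Lambda(b,\nu)$ is diagonal in the Fourier basis $(e^{in\theta})_{n\in\Z}$ of $L^{2}(S^{1})$. Using that $\langle A_{b,\nu},\vec{\nu}\rangle=0$ on $\partial\Omega$, one obtains
\[
\check\lambda_{n}(b,\nu)=-\frac{w_{n}'(1)}{w_{n}(1)}\,,
\]
where $w_{n}$ is the unique (up to scaling) solution on $(1,+\infty)$, square integrable at infinity, of
\[
-w_{n}''(r)-\tfrac{1}{r}w_{n}'(r)+\tfrac{1}{r^{2}}(n-\nu-br^{2})^{2}w_{n}(r)=0\,.
\]
Hence $\check\lambda^{DN}(b,\nu)=\inf_{n\in\Z}\check\lambda_{n}(b,\nu)$, and the Aharonov-Bohm flux $\nu$ enters only through the shift $n\mapsto n-\nu$ of the angular-momentum label.

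I would then rescale $\rho=b^{1/2}r$ and set $\tilde w_{n}(\rho)=w_{n}(\rho/b^{1/2})$; the ODE becomes $b$-independent while the boundary moves to $\rho=b^{1/2}$, so $\check\lambda_{n}(b,\nu)=-b^{1/2}\tilde w_{n}'(b^{1/2})/\tilde w_{n}(b^{1/2})$. For large $b$ the infimum is concentrated on modes with $n-\nu-b=\mathcal O(b^{1/2})$, which I parameterise by $\xi_{n}:=(n-\nu-b)b^{-1/2}\in b^{-1/2}(\Z-\nu-b)$; the admissible $\xi_{n}$'s form a lattice of mesh $b^{-1/2}$ that becomes dense in $\R$ as $b\to+\infty$. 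Shifting $\tau=\rho-b^{1/2}$ and Taylor-expanding both the radial potential $(n-\nu-\rho^{2})^{2}/\rho^{2}$ and the transport term $\rho^{-1}\partial_{\rho}$ in powers of $b^{-1/2}$, the leading-order local equation is the parabolic cylinder equation
\[
W''(\tau)=(2\tau-\xi)^{2}W(\tau)\,,
\]
whose unique solution decaying at $\tau=+\infty$ is (up to a constant) $W(\tau)=D_{-1/2}(2\tau-\xi)$; this branch is singled out by matching with the outer WKB decaying solution. One thereby obtains the two-term semiclassical expansion
\[
\check\lambda_{n}(b,\nu)=b^{1/2}f(\xi_{n})+g(\xi_{n})+\mathcal O_{\nu}(b^{-1/2})\,,
\]
with $f(\xi):=-2\,D_{-1/2}'(-\xi)/D_{-1/2}(-\xi)$ and $g$ a smooth function produced by the first subleading correction.

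Finally one minimises in $\xi$. From $D_{-1/2}''(z)=\tfrac{z^{2}}{4}D_{-1/2}(z)$ one deduces $f'(\xi)=\tfrac{1}{2}(\xi^{2}-f(\xi)^{2})$, so every critical point obeys $f(\xi_{0})=|\xi_{0}|$, while the recurrence $D_{1/2}(z)=\tfrac{z}{2}D_{-1/2}(z)-D_{-1/2}'(z)$ translates the equation $f(\xi)=\xi$ into $D_{1/2}(-\xi)=0$. Since $f$ is positive with $f\to+\infty$ at $\pm\infty$, its unique minimiser is $\xi_{0}=\alpha$, where $-\alpha$ is the unique negative zero of $D_{1/2}$, and $f(\alpha)=\alpha$. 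As $f''(\alpha)>0$, discretising $\xi$ on the lattice $b^{-1/2}(\Z-\nu-b)$ costs only $\mathcal O(b^{-1})$, absorbed in the remainder; computing $g(\alpha)$ by pushing the matched parabolic-cylinder/WKB expansion one further order (exactly as in \cite{HKN} for $\nu=0$) then yields the universal constant $g(\alpha)=(\alpha^{2}+2)/6$. The main obstacle is precisely this subleading computation, which requires careful control of the first corrections to both the inner (parabolic cylinder) and outer (WKB) approximations; uniformity in $\nu$ is automatic since $\nu$ enters the reduced problem only through the shifted lattice of $\xi_{n}$'s, affecting neither $f$ nor $g$ but only the $\mathcal O_{\nu}(b^{-1/2})$ error.
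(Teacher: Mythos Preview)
Your proposal is correct in outline but follows a genuinely different route from the paper. After the common separation of variables and the formula $\check\lambda_n(b,\nu)=-w_n'(1)/w_n(1)$, the paper does \emph{not} perform a semiclassical rescaling. Instead it writes the solution explicitly via the confluent hypergeometric function of the second kind, obtaining $\check\lambda_n(b,\nu)=\nu-n+b-2b\,U'(\tfrac12,n-\nu+1,b)/U(\tfrac12,n-\nu+1,b)$, and then studies the \emph{intersection points} $\check z_n(\nu)$ of consecutive curves $\check\lambda_n$ and $\check\lambda_{n+1}$. These are characterised by $U(-\tfrac12,n-\nu+1,\check z_n(\nu))=0$, and at each of them a clean algebraic identity (the ``$(G)$-formula'') gives $\check\lambda_n(\check z_n(\nu),\nu)=(n-\nu)+1-\check z_n(\nu)$. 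The asymptotics of $\check z_n(\nu)$ as $n\to+\infty$ are then extracted by the Laplace method applied to the integral representation of $U$, and the $(G)$-formula immediately converts them into the stated two-term expansion of $\check\lambda^{DN}$; the parabolic cylinder functions appear only at the end, as limits of the Laplace integrals.

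Your approach---rescale, Taylor-expand to the parabolic cylinder equation, minimise the leading symbol $f(\xi)=-2D_{-1/2}'(-\xi)/D_{-1/2}(-\xi)$, and compute the next term as in \cite{HKN}---is essentially the general-domain method of \cite{HKN} specialised to this geometry, and the paper explicitly acknowledges this as an alternative proof. What the paper's special-function route buys is that the same intersection-point machinery simultaneously yields strong diamagnetism (monotonicity of $b\mapsto\check\lambda^{DN}(b,\nu)$), which your semiclassical argument does not address. Conversely, your approach is more robust and explains transparently why the two leading terms are $\nu$-independent. One caveat: your claim that ``uniformity in $\nu$ is automatic'' is stronger than what the theorem asserts (the remainder is $\mathcal O_\nu$), and the paper itself remarks that neither proof controls the $\nu$-dependence of the remainder; the third term is in fact expected to oscillate with $\nu$.
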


\vspace{0.2cm}\noindent
We recall that the parabolic cylinder functions $D_\mu (z)$ are the (normalized)  solutions of the differential equation 
\begin{equation}\label{ODEDnu0}
	w'' + (\mu  + \frac 12 - \frac{z^2}{4})\ w=0\,,
\end{equation}
which tend to $0$ as $z \to +\infty$. We refer to Section \ref{champconstant} for more details on the parabolic cylinder functions. 

\vspace{0.2cm}\noindent
At last, the positive real $\alpha$ appearing in this theorem is approximately equal to 
\begin{equation}\label{approalpha}
	\alpha = 0.7649508673 ....
\end{equation}

It is worth to notice that the two first terms of the expansion are independent of $\nu$ as it was predicted by the proofs in \cite{HKN} for general bounded domains. We refer to the last section for a more developed discussion.


\vspace{0.3cm}
\noindent \textbf{Acknowledgements}:  One motivation of this paper comes from a question of Vincent Bruneau, 
when the first author was presenting the results of \cite{HN}  in the case of the disk in Bordeaux. We are also  grateful to Denis Grebenkov, Ayman Kachmar  and Mikael P. Sundqvist for helpful discussions and remarks around  this work.\\


\section{Generalities for the exterior problem with variable magnetic field}
\vspace{0.1cm}\noindent
The case of the exterior problem has been analyzed in the magnetic field case under various assumptions (see for example \cite{GKPS,FPS0,KLPS}). Here we will work under the following assumption:
\begin{hypo}\label{eq:H00}
We assume that  $\Omega$ is the complementary of a bounded regular connected set in $\mathbb R^2$ and that
\begin{equation}
\liminf_{x\in \Omega, |x|\rightarrow+\infty} B(x) >0\,.
\end{equation}
\end{hypo}
Under this condition, using a variant of  the results of \cite{HeMo, KPS} based on Persson's Lemma, the Dirichlet magnetic Laplacian $H_A^D$ has its essential spectrum  bounded from below:
\begin{equation}\label{eq:essspD}
\inf \sigma_{ess}(H_A^D) \geq \liminf_{x\in \Omega, |x|\rightarrow+\infty} B(x)\,,
\end{equation}
and the same holds for the Neumann realization
\begin{equation}\label{eq:essspN}
\inf \sigma_{ess}(H_A^{Ne}) \geq \liminf_{x\in \Omega, |x|\rightarrow+\infty} B(x)\,.
\end{equation}

Let us show 
\begin{lemma}
Under Hypothesis \ref{eq:H00} and assuming that $\Omega$ is connected, we have
\begin{equation}
\inf \sigma(H_A^D) >0\,,
\end{equation}
and
\begin{equation}
\inf \sigma(H_A^{Ne}) >0\,.
\end{equation}
\end{lemma}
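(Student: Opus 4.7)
The plan is to combine the bounds \eqref{eq:essspD}–\eqref{eq:essspN} on the essential spectrum with a unique continuation argument for covariantly flat sections. Since $H_A^D$ and $H_A^{Ne}$ are non-negative self-adjoint operators and, under Hypothesis \ref{eq:H00}, their essential spectra are contained in $[\liminf_{|x|\to+\infty}B(x),+\infty) \subset (0,+\infty)$, the infimum of the spectrum is either strictly positive or else an isolated eigenvalue at $0$. Hence it suffices to rule out $0$ as an eigenvalue in each case.

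Suppose first that $u \in \mathrm{Dom}(H_A^D)$ satisfies $H_A^D u = 0$. Testing against $u$ in the quadratic form gives
\begin{equation}
0 = \langle H_A^D u , u \rangle = \int_\Omega |(D-A)u|^2 \, dx\,,
\end{equation}
so that $(D-A)u = 0$ almost everywhere in $\Omega$, i.e.\ $\nabla u = i A\, u$ in the distributional sense. Since $u \in H^1(\Omega)$ and $A$ is real-valued, the Leibniz rule yields
\begin{equation}
\nabla |u|^2 = (\nabla u)\bar u + u\,\overline{\nabla u} = iA|u|^2 - iA|u|^2 = 0\,.
\end{equation}
Because $\Omega$ is connected, $|u|$ must be constant on $\Omega$.

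For the Dirichlet realization, the trace $u_{|\partial\Omega}=0$ then forces $|u|\equiv 0$, hence $u\equiv 0$, and $0$ is not an eigenvalue of $H_A^D$. For the Neumann realization, we use instead that $\Omega$ is unbounded: a nonzero constant $|u|$ is not square integrable on $\Omega$, so again $u\equiv 0$, and $0$ is not an eigenvalue of $H_A^{Ne}$. In both cases we conclude $\inf\sigma(H_A^D)>0$ and $\inf\sigma(H_A^{Ne})>0$.

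The only delicate points are (i) that the argument for the Neumann case genuinely needs $\Omega$ to be unbounded (which is the case here, since $\Omega$ is the complement of a bounded set), and (ii) the identification of the form domain with $H^1$-regular functions so that the computation $\nabla|u|^2 = 0$ is legitimate; both are standard and should not constitute a real obstacle.
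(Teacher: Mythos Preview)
Your proof is correct and in fact more economical than the paper's. The paper proceeds differently: from $(D-A)u=0$ it differentiates once more to obtain $Bu=0$, then uses $\liminf_{|x|\to\infty}B(x)>0$ to conclude that $u$ vanishes outside a large disk, and finally invokes unique continuation (or a direct Cauchy--Lipschitz argument along line segments) to propagate this vanishing through all of the connected set $\Omega$. Your route bypasses both the curvature identity and unique continuation: you observe that $(D-A)u=0$ forces $|u|$ to be constant on the connected domain $\Omega$, and then use that $\Omega$ has infinite measure together with $u\in L^2(\Omega)$ to conclude $u=0$. Two small remarks: first, your separate treatment of the Dirichlet case via the boundary trace is unnecessary, since the infinite-measure argument already covers it (the paper simply reduces Dirichlet to Neumann via the form inequality $\inf\sigma(H_A^{Ne})\le\inf\sigma(H_A^D)$); second, be careful with the claim ``$u\in H^1(\Omega)$'', since the form domain here is $H^1_A(\Omega)$ and $A$ is unbounded --- but $(D-A)u=0$ with $A$ locally bounded immediately gives $u\in H^1_{\mathrm{loc}}$, which is all the Leibniz computation needs. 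What the paper's approach buys in exchange for its extra length is that it localizes the vanishing via the zero set of $B$ rather than via a global integrability constraint.
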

\begin{proof}
The proof is by contradiction. By the variational characterization of the spectrum, it is enough to consider the Neumann case. If $0$ was in the spectrum, it would be by \eqref{eq:essspN}  an eigenvalue. Hence there would be a non zero  eigenfunction $u$ such that
\begin{equation}\label{differential}
du = i \omega_A u\,,
\end{equation}
where $\omega_A$ is the magnetic potential, considered as a one form.\\
Taking the differential above, we get:
\begin{equation}
0=  \omega_A \wedge du + \sigma_B u \,,
\end{equation}
where we recall that $\sigma_B$ is the magnetic field considered as a $2$-form, i.e  $\sigma_B = B\,  dx\wedge dy$. 
Using again (\ref{differential}), we get immediately:
\begin{equation}
B u =0\,.
\end{equation}
By the condition at $\infty$, $u$ would be $0$ outside a large disk, and by unique continuation theorem\footnote{We recall that, in dimension $n \geq 3$, the unique continuation principle holds for a uniformly elliptic operator on a domain $\Omega$ if the coefficients of the principal part of this operator are locally Lipschitz continuous, whereas in dimension $n=2$, the unique continuation principle holds if the coefficients of the principal part are $L^{\infty}$ (see for instance \cite{Hor1, Hor2, Tat}).}
 for an eigenfunction of $H_A$ and the connectedness of $\Omega$, we get $u=0$. 

\vspace{0.1cm}\noindent
For completion, we give a more direct argument which does not involve the unique continuation principle. Let $x_0 \in \partial \supp u \cap \Omega$ and $v\in \mathbb R^2$ of norm $1$. Let us consider the function $t \mapsto \phi_{x_0,v}(t):= u(x_0+t v)$ which is well defined for $|t| < d(x_0,\partial \Omega)$. We now notice that $\phi_{x_0,v}$ is the solution of the ordinary differential equation
$$
\frac{d}{dt} \phi_{x_0,v}(t) = i < A(x_0+vt) , v>  \phi_{x_0,v}(t)
$$
with
$$
\phi_{x_0,v}(0)=0\,.
$$
This implies by the Cauchy-Lipschitz theorem that $u$ vanishes in the disk centered at $x_0$ of radius $d(x_0,\partial \Omega)$ in contradiction of the choice of $x_0$.
\end{proof}

Since  $\partial \Omega$ is compact, the construction given in the bounded case works identically in the exterior case if we replace $H^1(\Omega)$ by the magnetic  Sobolev space $$H^1_A:=\{u\in L^2(\Omega)\,,\, (D-A) u\in L^2(\Omega;\mathbb R^2)\}\,.
$$ 
Notice that the trace space at $\partial \Omega$ is the same and independent of the magnetic potential:
$$
H^{1/2}_A(\partial \Omega)= H^{1/2}(\partial \Omega)\,,
$$
and that the situation could be more delicate if $\partial \Omega$ is not compact (see \cite{HKN} for a discussion).

\begin{prop}
Under Hypothesis \ref{eq:H00} and assuming that $ \Omega$ is connected  we have
$$
\inf \sigma(\Lambda_A) >0\,,
$$
where $\Lambda_A$ denote the D-to-N map associated with the magnetic potential $A$ on $\Omega$.
\end{prop}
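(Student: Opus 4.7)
The plan is to establish a coercivity bound of the form $\langle \Lambda_A f, f\rangle \geq c\,\|f\|_{L^2(\partial\Omega)}^2$ for all $f \in H^{1/2}(\partial\Omega)$ with some constant $c>0$ independent of $f$. Combined with the min-max characterization of the bottom of the spectrum of a non-negative self-adjoint operator, this immediately yields $\inf\sigma(\Lambda_A)\geq c>0$.

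First, I would invoke the preceding lemma to extract $c_0 := \inf\sigma(H_A^{Ne})>0$, which, by the variational characterization of the Neumann realization, is equivalent to the magnetic Poincar\'e inequality
$$
c_0\,\|u\|_{L^2(\Omega)}^2 \;\leq\; \|(D-A)u\|_{L^2(\Omega)}^2, \qquad \forall u \in H^1_A.
$$
Next, I would use the fact that $\partial \Omega$ is compact and smooth and that $A$ is bounded on a neighborhood of $\partial\Omega$: a standard localization via a smooth cutoff $\chi$ equal to $1$ near $\partial\Omega$ and compactly supported in $\overline{\Omega}$ reduces the trace estimate to the classical one on a bounded domain, since on such a neighborhood the $H^1_A$ and $H^1$ norms are equivalent up to lower-order terms. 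This provides the continuity of the trace map $\tau : H^1_A \to L^2(\partial \Omega)$, with an estimate of the form
$$
\|\tau u\|_{L^2(\partial\Omega)}^2 \;\leq\; C_t\,\bigl( \|u\|_{L^2(\Omega)}^2 + \|(D-A)u\|_{L^2(\Omega)}^2\bigr).
$$

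Given $f\in H^{1/2}(\partial\Omega)$, I would let $u_f \in H^1_A$ be the unique solution of the Dirichlet problem (\ref{Dirichletdisque}), whose existence and uniqueness are guaranteed by $0 \notin \sigma(H_A^D)$ (preceding lemma). Inserting $u_f$ in the trace inequality, dominating $\|u_f\|_{L^2(\Omega)}^2$ by $c_0^{-1}\|(D-A)u_f\|_{L^2}^2$ via the Poincar\'e bound, and applying the weak formulation (\ref{DtNweak}) with $v=u_f$ (which gives $\langle \Lambda_A f,f\rangle = \|(D-A)u_f\|_{L^2}^2$), I would obtain
$$
\|f\|_{L^2(\partial\Omega)}^2 \;\leq\; C_t\,(1+c_0^{-1})\,\langle \Lambda_A f, f\rangle,
$$
which is exactly the desired coercivity with $c = \bigl[C_t(1+c_0^{-1})\bigr]^{-1}$.

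The only genuinely technical point is the justification of the trace estimate in the exterior setting, but since $\partial\Omega$ is compact it reduces after localization to the classical trace theorem on a bounded domain; everything else is a direct consequence of the positivity of the Neumann magnetic Laplacian (previous lemma) and the weak formulation of the D-to-N map.
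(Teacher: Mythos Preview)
Your proof is correct and follows essentially the same approach as the paper's: both combine the lower bound $\langle \Lambda_A f,f\rangle = \|(D-A)u\|^2 \geq \inf\sigma(H_A^{Ne})\,\|u\|^2$ from the preceding lemma with the continuity of the trace map $H^1_A \to H^{1/2}(\partial\Omega)$ (or into $L^2(\partial\Omega)$, as you state it) to obtain the coercivity. Your write-up simply makes the chain of inequalities explicit and justifies the trace estimate via localization near the compact boundary, whereas the paper records the two ingredients in two sentences.
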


\begin{proof}
Notice that 
$$
\left\langle \Lambda_{A} f , f \right \rangle_{H^{-1/2}(\partial \Omega) \times H^{1/2}(\partial \Omega)} \geq \inf \sigma(H_A^{Ne}) \ ||u||^2\,.
$$
Together with the continuity of the map $u\mapsto u_{|\partial \Omega}$ from $H^1_A(\Omega)$ onto $H^{\frac 12}(\partial \Omega)$, this implies that 
 the D-to-N map is a positive operator.
 \end{proof}

All these conditions are satisfied for the case of the complementary of the disk and the non-zero constant magnetic case, which will be our main interest in this paper.

\section{ Constant magnetic field in the disk--Reminder} \label{champconstant}

In this section, we recall the main results of \cite{HN} in the case of the disk and for the magnetic potential $A^b(x,y) = b\,  (-y,x)$ only, (i.e in absence of the (A-B) potential $A_\nu (x,y)$).  Like initiated in \cite{S-J} this approach is based on the use of 
 families of special functions (see \cite{SoSo,BW24,HeLe,KMi} for other applications in the same spirit).

\vspace{0.1cm}\noindent
We use the standard Fourier decomposition to solve the boundary value problem (\ref{Dirichletdisque}). In polar coordinates $(r, \theta)$, the D-to-N map is defined (in a weak sense) by:
\begin{equation}\label{defD-to-N--}
	\begin{array}{lll}
		\Lambda^{\rm DN}(b) : H^{\half} (S^1) & \to &    H^{-\half} (S^1) \\
		\hspace{1.5cm} \Psi  &\to& \partial_r v (r, \theta)|_{r=1} \,,
	\end{array}
\end{equation}
where $v$ is the solution of \eqref{Dirichletdisque} with $f=\Psi$ expressed in polar coordinates.\\

\noindent
We write the solution $v(r, \theta)$ in the form  
\begin{equation}
	v(r, \theta) = \sum_{n \in \Z} v_n (r) e^{in \theta}\ \ , \ \  \Psi(\theta) = \sum_{n \in \Z} \Psi_n e^{in \theta}, 
\end{equation}
and we see, (\cite{CLPS1}, Appendix B), that $v_n (r)$ solves the ODE:
\begin{equation}\label{polarequations}
	\left\{
	\begin{array}{ll}
		- v_n'' (r) - \frac{v_n'(r)}{r} + (br-\frac{n}{r} )^2 v_n (r)= 0   & \mbox{for} \  r \in (0,1) , \\
		v_n (1) = \Psi_n .
	\end{array}
	\right.
\end{equation}
A bounded solution to the differential equation (\ref{polarequations}) is given (see \cite{CLPS1}, Eq. (B.2))  by 
\begin{equation}\label{solutionpos}
	v_n(r) = c_n e^{-\frac{br^2}{2}} r^nM(\half, n+1, br^2)   \ \ \mbox{for} \ n \geq 0\, ,
\end{equation}
where $M(a,c,z)$ is the Kummer confluent hypergeometric function, (this function is also denoted by $_1F_1 (a,c,z)$ in the literature), and is defined as 
\begin{equation}\label{Kummerfunction}
	M(a,c,z) = \sum_{n=0}^{+\infty} \frac{(a)_n}{(c)_n} \ \frac{z^n}{n!}\,.
\end{equation}
\vspace{0.1cm}\noindent
Here $z$ is a complex variable, $a$ and $c$ are parameters which can take arbitrary real or complex values, except that $c \notin \Z^-$. At last, 
\begin{equation}
	(a)_0 =1 \ ,\ (a)_n = \frac{\Gamma(a+n)}{\Gamma(a)} = a(a+1) ...(a+k-1), \ k=1,2, ... \,,
\end{equation}
are the so-called Pochhammer's symbols, (see \cite{MOS1966}, p. 262). Finally, the  function $M(a,c,z)$ satisfies the differential equation:
\begin{equation}\label{KummerODE}
	z\frac{d^2 w}{dz^2}  + (c-z)\frac{dw}{dz} -a w=0\,.
\end{equation}
Note that for $n \leq -1$ and thanks to symmetries in  (\ref{polarequations}), we get a similar expression for $v_n(r)$  
changing the parameters $(n,b)$ into $(-n, -b)$.

\vspace{0.1cm}\noindent
Now, let us return to the study of the eigenvalues of the D-to-N map $\Lambda(b)$. They are usually called {\it magnetic Steklov eigenvalues} and  given by
\begin{equation}
	\lambda_n (b)= \frac {v_n' (1)}{v_n (1)} \ \ \mbox{for} \ n \in \Z \,.
\end{equation}
Thus, using (\ref{solutionpos}),  we see that the {\it magnetic Steklov spectrum} is the set:
\begin{equation}\label{spectrumSpos}
	\sigma(\Lambda^{\rm DN}(b)) = \{\lambda_0(b)\} \cup \{\lambda_n(b) , \lambda_n(-b) \ \}_{n \in \N^*},
\end{equation}
where for $n \geq 0$,
\begin{equation}\label{explicitvp}
	\lambda_n (b) =  n - b +  2b \ \frac{M'(\half, n+1, b)}{M(\half, n+1,b)}.
\end{equation}

\vspace{0.2cm}\noindent
In \cite{HN}, motivated by questions in (\cite{CGHP}, Example 2.8), we were interested in the analysis of
\begin{equation}
	\lambda^{\rm DN}(b) :=\inf_{n\in \Z} \lambda_n(b)\,,
\end{equation}
as $b\rightarrow +\infty$.

\begin{figure}
	\begin{center}
			\includegraphics[width=0.48\textwidth]{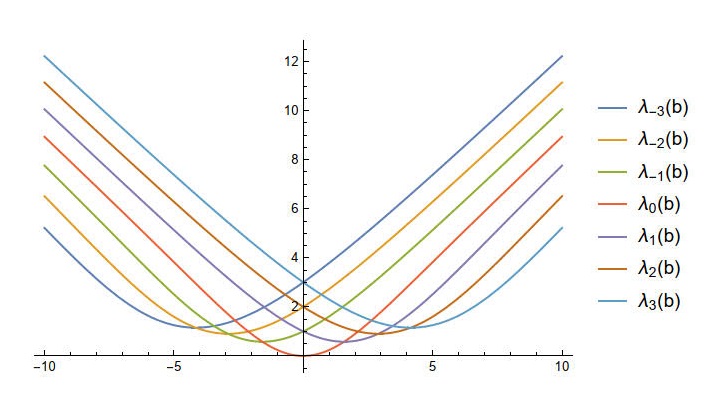}
		\includegraphics[width=0.48\textwidth]{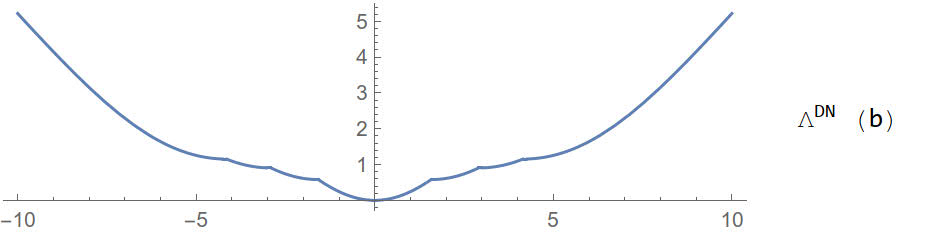} 
			\end{center}
	\caption{The Steklov eigenvalues $\lambda_n (b)$ (left) on the disk and the ground state energy $\lambda^{DN}(b) $ (right). }
\end{figure}

\vspace{0.2cm}\noindent
The first result of \cite{HN} is the following:
\begin{thm}\label{mainHN}
	One has the asymptotic expansion as $b \to + \infty$,
	\begin{equation}
		\lambda^{DN}(b) = \alpha \,  b^{1/2} - \frac{\alpha^2 +2}{6} + \mathcal O (b^{-1/2})\,,
	\end{equation}
	where $-\alpha$ is the unique negative zero  of the so-called parabolic cylinder function $D_{\half} (z)$ .
\end{thm}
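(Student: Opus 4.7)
The plan is to find, for large $b$, the minimizing index $n^{*}(b)\in\Z$ in the explicit formula (\ref{explicitvp}) and to extract the first two terms of $\lambda_{n^{*}(b)}(b)$. The symmetry $\lambda_n(b)\le\lambda_{-n}(b)$ for $n\ge 0$ reduces the problem to non-negative indices. A Landau-level heuristic---the Dirichlet bulk spectrum of $H_A$ sits above $2b$, so a ground state of order $\sqrt b$ must come from a boundary layer near $r=1$---suggests the scaling $n=b+\mu\sqrt b$ with $\mu\in\R$. We substitute $u(r):=r^{1/2}v_n(r)$ in (\ref{polarequations}) to obtain the Schr\"odinger form
\[
-u''(r)+\left[(br-n/r)^2-\tfrac{1}{4r^2}\right]u(r)=0,\qquad r\in(0,1),
\]
with $u$ bounded at $0$, from which $\lambda_n(b)=u'(1)/u(1)-\half$.

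We now zoom near the boundary via $r=1-t/(2\sqrt b)$ and set $z=t+\mu$. Expanding $(br-n/r)^2-1/(4r^2)$ in powers of $b^{-1/2}$, the rescaled unknown $U(z):=u(r)$ satisfies
\[
U''(z)-\tfrac{z^2}{4}\,U(z)=b^{-1/2}\,P_1(z,\mu)\,U(z)+\mathcal O(b^{-1}),
\]
where $P_1$ is an explicit cubic polynomial in $z$ depending on $\mu$. To leading order this is the parabolic cylinder equation with parameter $\nu=-\half$, and the requirement $U\to 0$ at $+\infty$ (which encodes the boundedness of $u$ at $r=0$) singles out $U_0(z)=D_{-1/2}(z)$ up to a constant. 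The recurrence $D'_{-1/2}(z)=(z/2)D_{-1/2}(z)-D_{1/2}(z)$ then yields
\[
\lambda_n(b)=-\sqrt b\,\mu+2\sqrt b\,\frac{D_{1/2}(\mu)}{D_{-1/2}(\mu)}-\half+\mathcal O(b^{-1/2}).
\]
Minimizing the profile $F(\mu):=-\mu+2D_{1/2}(\mu)/D_{-1/2}(\mu)$, the recurrence $D_{3/2}(z)=zD_{1/2}(z)-\half D_{-1/2}(z)$ shows that $F'(\mu)=0$ is equivalent to $D_{1/2}(\mu)=0$; hence the unique real minimizer is $\mu=-\alpha$, with $F(-\alpha)=\alpha$ and $F''(-\alpha)>0$. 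Since $n$ is restricted to $\Z$, the attainable $\mu$ lies within $\mathcal O(b^{-1/2})$ of $-\alpha$, and the quadratic non-degeneracy forces this integer mismatch to contribute only $\mathcal O(b^{-1/2})$ to $\lambda_n$.

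It remains to adjust the constant from $-\half$ (obtained above) to the announced $-(\alpha^2+2)/6$, a difference of $(1-\alpha^2)/6$ produced by the $b^{-1/2}$ terms in the rescaled equation. Writing $U=D_{-1/2}+b^{-1/2}U_1+\mathcal O(b^{-1})$ and solving the inhomogeneous parabolic cylinder equation for $U_1$ (with source $P_1(z,-\alpha)D_{-1/2}(z)$ and decay at $+\infty$) by variation of parameters against the Wronskian of $\bigl(D_{-1/2}(z),D_{-1/2}(-z)\bigr)$, one computes $U_1(-\alpha)/U_0(-\alpha)$ and $U_1'(-\alpha)/U_0(-\alpha)$; using $D_{1/2}(-\alpha)=0$ and the resulting identity $D_{3/2}(-\alpha)=-\half D_{-1/2}(-\alpha)$, the correction to $\lambda_n$ simplifies to $(1-\alpha^2)/6$. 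The main obstacle is precisely this last step: one must keep all Taylor terms in the expansions of $1/r$, $1/r^2$ and of the change-of-variable Jacobian up to order $b^{-1/2}$, compute the resulting $P_1$ explicitly, and verify the rather delicate algebraic cancellations at $\mu=-\alpha$ that yield the clean constant $(1-\alpha^2)/6$. The uniform $\mathcal O(b^{-1/2})$ remainder is obtained from standard boundary-layer error estimates together with the non-degeneracy of $F$, which rules out a hidden lower value of $\lambda_n(b)$ at a non-optimal integer $n$.
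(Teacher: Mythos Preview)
Your approach is genuinely different from the one the paper (following \cite{HN}) uses. The paper's method is purely algebraic in the special functions: one characterizes the \emph{intersection points} $z_n$ of consecutive eigenvalue curves by the equation $M(-\tfrac12,n+1,z_n)=0$ (resp.\ $U(-\tfrac12,n+1,\check z_n)=0$ in the exterior case), derives the exact ``$(F)$-formula'' $\lambda_n(z_n)=z_n-n-1$, proves that $\lambda^{DN}(z)=\lambda_n(z)$ on $[z_{n-1},z_n]$, and then obtains the asymptotics of $z_n$ by Laplace-method analysis of the Kummer integral representation. The two-term expansion of $\lambda^{DN}$ then follows from $(F)$ and the four-term expansion of $z_n$. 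Your route is a direct boundary-layer/WKB analysis of the ODE~(\ref{polarequations}) in the scaled variable $z=2\sqrt b\,(1-r)+\mu$, which bypasses the intersection-point machinery entirely. The advantage of your approach is conceptual transparency (the parabolic cylinder equation appears immediately as the model problem); the advantage of the paper's approach is that the $(F)$-formula and the characterization of $z_n$ are exact identities, so the remainder control and the constant $-(\alpha^2+2)/6$ come out of a clean asymptotic expansion of a single implicit equation rather than from a delicate inhomogeneous computation.

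That said, two points in your argument are not yet proofs. First, the selection of $D_{-1/2}$ as the leading profile is justified by ``$U\to 0$ at $+\infty$ encodes boundedness of $u$ at $r=0$,'' but in your scaling $r=0$ corresponds to $t=2\sqrt b$, not $t=+\infty$; you need a genuine matching argument (or the known connection formula between $M(\tfrac12,n+1,\cdot)$ and $D_{-1/2}$) to rule out the exponentially growing solution. Second---and you acknowledge this---the computation of the constant correction $(1-\alpha^2)/6$ is only outlined: you have not exhibited $P_1$, solved for $U_1$, or carried out the cancellations at $\mu=-\alpha$. In the paper's scheme this constant drops out automatically from the $(F)$-formula combined with $z_n=n+\alpha\sqrt n+\tfrac{\alpha^2+2}{3}+\mathcal O(n^{-1/2})$, with no inhomogeneous ODE to solve. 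Finally, your claim that $F'(\mu)=0\Leftrightarrow D_{1/2}(\mu)=0$ is not quite right: the derivative factors as $F'(\mu)=2\,\dfrac{D_{1/2}(\mu)}{D_{-1/2}(\mu)}\Bigl(\dfrac{D_{1/2}(\mu)}{D_{-1/2}(\mu)}-\mu\Bigr)$, so a separate argument is needed to exclude the second factor (or to show directly that $\mu=-\alpha$ is the global minimum).
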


\vspace{0.2cm}\noindent
We recall that the parabolic cylinder functions $D_\mu (z)$ are solutions of the differential equation  \eqref{ODEDnu0} which tend to $0$ as $z \to +\infty$. 
For any $\mu <0$, one has the following integral representation (\cite{MOS1966}, p. 328):
\begin{equation}\label{integralrep}
	D_\mu (z) = \frac{e^{- \frac{z^2}{4}}}{\Gamma(-\mu)} \ \int_0^{+\infty} t^{-\mu -1} e^{- ( \frac{t^2}{2} +zt)} \ dt \,.
\end{equation} 
The parabolic cylinder functions have the following asymptotic expansion (\cite{MOS1966}, p. 331):
\begin{equation}\label{asymptDnu}
	D_\mu (z) = e^{- \frac{z^2}{4} } z^\mu \ (1+ \mathcal O (\frac{1}{z^2})) \ ,\ z \to + \infty.
\end{equation}
Notice that for $\mu <0$, these asymptotics are obtained by applying the Laplace integral method in \eqref{integralrep}.
The parabolic cylinder functions $D_\mu (z)$ satisfy the recurrence relations  (\cite{MOS1966}, p. 327),
\begin{subequations}
	\begin{equation}\label{recurrenceDnu}
		D'_\mu (z) - \frac{z}{2} D_\mu (z) + D_{\mu+1}(z)=0 \,,
	\end{equation}
	\begin{equation}\label{recurrenceDnu1}
		D_{\mu+1} (z) - z D_\mu (z) + \mu D_{\mu-1}(z)=0 \,.
	\end{equation}
	\begin{equation}\label{recurrenceDnu2}
		D'_{\mu} (z) + \frac{z}{2} D_\mu (z) -\mu  D_{\mu-1}(z)=0 \,,
	\end{equation}
\end{subequations}

\vspace{0.2cm}\noindent
At last, the second result obtained in  \cite{HN} is concerned by strong diamagnetism.

\begin{thm}
	The map $b \mapsto \lambda^{DN}(b) $ is increasing on $(0,+\infty)$. 
\end{thm}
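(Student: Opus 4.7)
The strategy is to derive a closed-form expression for $\frac{d\lambda_n}{db}$ on each Fourier mode and then combine them through an envelope argument applied to $\lambda^{DN}(b) = \inf_{n} \lambda_n(b)$.

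Setting $f_n(b) := M(\half, n+1, b)$ and $g_n(b) := b f_n'(b)/f_n(b)$, one has by (\ref{explicitvp}) that $2 g_n(b) = \lambda_n(b) + b - n$. Substituting the Kummer ODE $b f_n'' + (n+1-b) f_n' - \half f_n = 0$ into the expression $g_n' = f_n'/f_n + b f_n''/f_n - b (f_n'/f_n)^2$ produces the Riccati relation $2b\, g_n'(b) = 2 g_n(b)\bigl(b - n - g_n(b)\bigr) + b$, which, rewritten in terms of $\lambda_n$, gives the compact formula
\begin{equation*}
\frac{d \lambda_n}{db}(b) = \frac{(b-n)^2 - \lambda_n(b)^2}{2b}, \qquad b > 0.
\end{equation*}
Since $\lambda_n(b) \geq 0$, the derivative is non-negative if and only if $\lambda_n(b) \leq |b - n|$. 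Combining this with the symmetry $\lambda_n(b) \leq \lambda_{-n}(b)$, which allows one to write $\lambda^{DN}(b) = \min_{n \geq 0} \lambda_n(b)$, and with the fact that $\lambda^{DN}$ is continuous and piecewise real-analytic (each $\lambda_n$ being analytic in $b$, with one-sided derivatives at crossings coinciding with those of the active branches), the monotonicity of $\lambda^{DN}$ reduces to the single bound $\lambda_{n_*(b)}(b) \leq |b - n_*(b)|$ at every active mode $n_*(b)$.

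To verify this bound, I would first study the Riccati flow for $v_n := 2 g_n$ with $v_n(0) = 0$ and show that each branch $b \mapsto \lambda_n(b)$ strictly decreases from $\lambda_n(0) = n$, first meets the curve $|b - n|$ at a unique point $b_n^*$ (its global minimum, with $\lambda_n(b_n^*) = b_n^* - n$), and is strictly increasing with $\lambda_n(b) < |b - n|$ for $b > b_n^*$. The claim then becomes that every active-branch crossing $\lambda_{n-1}(b^*) = \lambda_n(b^*)$ satisfies $b^* \geq b_n^*$, equivalently $\lambda_{n-1}(b_n^*) \geq b_n^* - n$. The \emph{main obstacle} is to prove this last comparison, which I expect to do by induction on $n \geq 1$, using Gauss's contiguous relations for the confluent hypergeometric functions $M(\half, n+1, b)$ to link $\lambda_{n-1}$ to $\lambda_n$ and propagate the inequality from level $n-1$; the base case $n = 1$ is then checked directly from the series expansions of $M(\half, 1, b)$ and $M(\half, 2, b)$ together with the monotonicity of $\lambda_0$ already established in the phase-portrait step.
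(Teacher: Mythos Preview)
Your Riccati identity $\lambda_n'(b) = \dfrac{(b-n)^2 - \lambda_n(b)^2}{2b}$ is correct and is in fact just an intrinsic rewriting of the special-function expression for $\lambda_n'$ that the paper (following \cite{HN}) obtains via the contiguous relations for $M$; the two approaches coincide up to this point. The phase-portrait step (unique minimum of $\lambda_n$ at $b_n^*$ with $\lambda_n(b_n^*)=b_n^*-n$, increase thereafter) is also fine.

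There are two problems with the remainder. First, your stated equivalence has the wrong sign. Since $\lambda_{n-1}(0)=n-1<n=\lambda_n(0)$ and the crossing is unique, one has $\lambda_{n-1}(b)<\lambda_n(b)$ for $b<z_{n-1}$; hence $z_{n-1}\geq b_n^*$ is equivalent to $\lambda_{n-1}(b_n^*)\leq \lambda_n(b_n^*)=b_n^*-n$, not $\geq$. In truth equality holds, so both inequalities are satisfied, but the logical step as written is backwards.

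Second, the ``main obstacle'' you isolate is exactly where the paper's route is more efficient than the induction you sketch. The paper characterises the crossing $z_n$ of $\lambda_n$ and $\lambda_{n+1}$ as the unique positive zero of $M(-\tfrac12,n+1,\cdot)$ and, by one application of the contiguous relations, derives the (F) formula $\lambda_n(z_n)=z_n-n-1$. Since $\lambda_{n+1}(z_n)=\lambda_n(z_n)=z_n-(n+1)$, your Riccati identity gives $\lambda_{n+1}'(z_n)=0$: the crossing with the previous branch is \emph{exactly} the minimum of the next one, i.e.\ $b_{n+1}^*=z_n$ for every $n\geq 0$, with no induction required. The monotonicity of $\lambda^{DN}$ and the structure $\lambda^{DN}=\lambda_n$ on $[z_{n-1},z_n]$ then follow at once (together with $z_{n-1}<z_n$). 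Your proposed induction would have to locate $b_n^*$ and compare $\lambda_{n-1}$ there without direct access to it; it is not clear how either the base case or the inductive step would proceed without, in effect, rederiving the (F) formula.
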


\vspace{0.2cm}\noindent
 Notice that the case of the Neumann realization in the interior of the disk has been analyzed extensively (see \cite{HeLe} and references therein) and the case in the exterior of the disk is analyzed in \cite{GKPS,KLPS,HKN}. The Dirichlet case
  is also analyzed in \cite{BW24}.

\section{Magnetic Steklov eigenvalues on the exterior of the disk.}\label{s4}

In this section, we assume that the magnetic potential is given by $ A(x,y) = b (-y, x)$, (i.e we assume that the flux $\nu=0$). The case $\nu \in  (\half,\half]\backslash \{ 0 \}$ will be studied in Section \ref{ABsection}.

\subsection{Special functions}
Similarly to the interior case, we consider the following ordinary differential equations where $n \in \Z$:
\begin{equation}\label{polarequationsext}
	\left\{
	\begin{array}{ll}
		- v_n'' (r) - \frac{v_n'(r)}{r} + (br-\frac{n}{r} )^2 v_n (r)= 0   & \mbox{for} \  r \in (1,+\infty) , \\
		v_n (1) = 1\,.
	\end{array}
	\right.
\end{equation}
We emphasize that this time (in comparison with \eqref{polarequations}), the interval is $(1,+\infty)$ instead of $(0,1)$.

\vspace{0.2cm}\noindent
For a magnetic field $b \geq 0$, the unique bounded solution at infinity $v_n$, (actually $v_n$ decays exponentially) is given by 
\begin{equation}
	v_n (r)= c_n e^{- b r^2/2} r^n \ U (\frac 12,n+1, br^2)\,,
\end{equation}
where $c_n$ is a suitable constant.\\ We also note that, instead of the Kummer function $M(a,c,z)$ introduced  in Section \ref{champconstant}, 
we introduce a new function denoted by $U(a,c,z)$. This function is called {\it{the confluent hypergeometric  function of the second kind}}. As we will see later, $U(a,c,z)$ is  better adapted to the study of our  exterior problem.
 
\vspace{0.2cm}
\par\noindent
First, we observe that, although the function $M(a,c,z)$ is undefined if $c=-m$ with $m \in \N$, the following limit exists (\cite{MOS1966}, p. 263):
\begin{equation}\label{prolongeM} 
\lim_{c \to -m} \ \frac{1}{\Gamma(c)} M(a,c,z) = \frac{(a)_{m+1}} {(m+1)!} \ z^{m+1}\  M(a+m+1,m+2,z)\,.
\end{equation}
Thus, we can define for any $a,c \in \C$ and  $ -\pi<{\rm arg } \ z \leq \pi$,
\begin{equation}\label{defU}
	U(a,c,z) = \frac{\pi}{\sin(\pi z)}\ \left(\frac{ M(a,c,z) }{\Gamma(c)\Gamma(1+a-c)}  - z^{1-c} \frac{  M(1+a-c,2-c,z)}{\Gamma(a)\Gamma(2-c)} \right)\,.
\end{equation}
Actually, we can define $U(a,c,z)$ as a multiple-valued function with its principal branch  given by $-\pi<{\rm arg} \ z \leq \pi$. For more details concerning the analytic continuation of $U(a,c,z)$ on the Riemann surface,  
see \cite{MOS1966}, p. 263.
\vspace{0.2cm}
\par\noindent
For fixed values of $a$ and $c$, the hypergeometric function $U(a,c,z)$  has the following asymptotics as $z \to +\infty$, (\cite{MOS1966}, p. 289): 
\begin{equation}\label{asymptotiqueU}
\forall N\in \N\,,\, 	U(a,c,z) = \sum_{n=0}^N (-1)^n \frac{(a)_n (a+1 -c)_n}{n!} z^{-n-a} + \mathcal O(|z|^{-N-a-1})\,.
\end{equation}
At last, we have  (see \cite{MOS1966}, p. 277)  the following integral representation for $U(a,c,z)$, 
\begin{equation}\label{eq:integralrepU}
U(a,c,z)= \frac{1}{\Gamma(a)} \int_0^{+\infty} e^{-zt} t^{a-1} (1+t)^{c-a-1} dt\,\quad , \quad \Re a >0, \ \Re z >0\,.
\end{equation}
It follows that for $a>0$, $ c \in \R$, the function $z \to U(a,c,z)$ does not have real zeros.
\vspace{0.2cm}
\par\noindent
Similarly with  the derivative of $M(a,c,z)$ (see \cite{HN}), we get for the derivative of $U(a,c,z)$ with respect to $z$ and denoted by $U'(a,c,z)$:
\begin{equation}\label{derivU}
 U'(a,c,z) := - a \ U (a+1,c+1,z)\,.
\end{equation}

\par\noindent
For the convenience of the reader, we recall also  some of the relations (see \cite{MOS1966}, p. 265). \\
For any $a,c,z \in \C$, one has:
\begin{subequations}\label{ContiguousU}
\begin{eqnarray}
	&  U(a,c,z) - U(a,c-1,z) - a U(a+1,c,z)=0 \,.  \\
	& U(a-1,c,z) + (c-a) U(a,c,z) -z U(a,c+1,z)=0 \,.  
\end{eqnarray}
\end{subequations}

\subsection{Steklov eigenvalues}
We now compute the magnetic Steklov eigenvalues $\check \lambda_n(b)$ for this exterior problem. We begin with:
\begin{equation}
v'_n(r) = ( -  br  + \frac{n}{r}) v_n + 2 b r c_n e^{- b r^2/2} r^n   U' (\frac 12,n+1, br^2) 
\end{equation}
This leads 
to
\begin{equation}\label{explicitvpext}
\check \lambda_n(b) :=- \frac{v'_n(1)}{v_n(1)}= -n+b -  2b  \frac{ U' (\frac 12,n+1, b) }{ U (\frac 12,n+1, b) } \,.
\end{equation}
Using (\ref{derivU}), we easily see, to compare with \eqref{explicitvp},  that 
\begin{equation}\label{explicitvpext1}
\check \lambda_n(b) = -n+b +b \ \frac{  U (\frac 32,n+2, b) }{ U (\frac 12,n+1, b) }\,.
\end{equation}
Moreover,  thanks to symmetries in  (\ref{polarequationsext}), for a magnetic field $b \leq 0$, we get immediately: 
\begin{equation}\label{symetries}
\check \lambda_n(b) = \check \lambda_{-n}(-b)\,.
\end{equation}
Thus, we get the following picture for the family of eigenvalues $\check \lambda_n(b)$ associated with the D-to-N operator  $\check \Lambda (b)$, in the case of the exterior of a disk (see Figure 2), as well as its ground state energy
 \begin{equation}
  \check\lambda^{DN}(b):=\inf_{n\in \Z} \check \lambda_n(b)\,. 
  \end{equation}


\begin{figure}
	\begin{center}
		\includegraphics[width=0.48\textwidth]{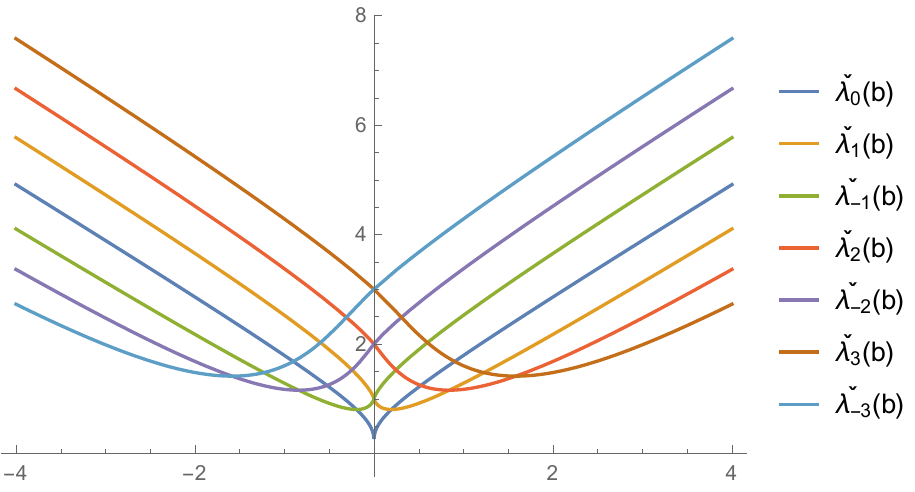}
		\includegraphics[width=0.48\textwidth]{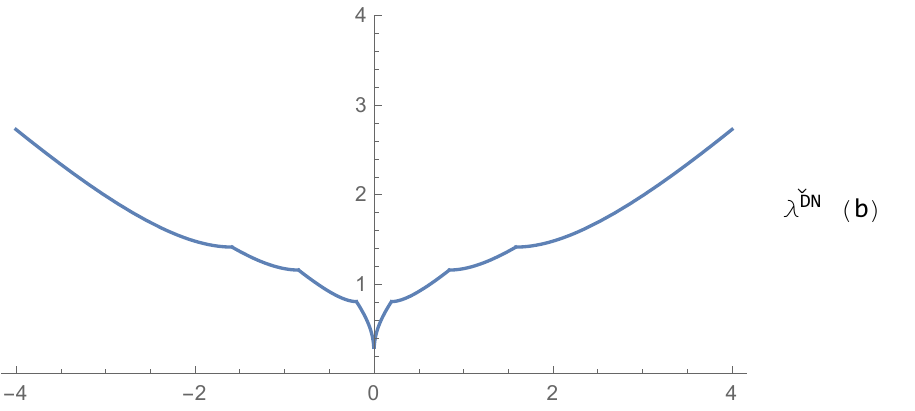} 
	\end{center}
\caption{The magnetic Steklov eigenvalues $\check \lambda_n(b)$ (left) and  the ground state energy $\check \lambda^{DN}(b)$ (right).}
\end{figure}

\newpage


\subsection{Weak magnetic field limit of the exterior D-to-N map $\check\Lambda(b)$.}
\vspace{0.1cm}
\par\noindent
The case $n=0$ is particularly interesting (see Figure 3). Indeed, we recall (\cite{MOS1966}, p. 288 or \cite{SoSo}) that the following hypergeometric functions  satisfy the asymptotic expansions as $z \to 0^+$: 
\begin{subequations}\label{asymptotUzero}
\begin{eqnarray}
	U\left(a,1,z\right)&=&-\frac{1}{\Gamma\left(a\right)}\left(\ln z+\frac{\Gamma'(a)}{\Gamma(a)}-2\gamma\right)+\mathcal O\left(z\ln z\right), \\
	U\left(a,2,z\right)&=&\frac{1}{\Gamma\left(a\right)}z^{-1}+\mathcal O\left(\ln z\right), \\ 
	U\left(a,n,z\right)&=&\frac{\Gamma(n-1)}{\Gamma(a)}z^{1-n}+\mathcal O\left(z^{2-n}\right) \ ,\ n \geq 3.
\end{eqnarray}
\end{subequations}
Using (\ref{explicitvpext1}), we see that the weak-field limit  for the eigenvalue  $\check \lambda_0(b)$ is given by:
\begin{equation} \label{lecasn=0}
	\check \lambda_0(b) = -\frac{2}{\log b} + \mathcal O(\frac{1}{(\log b)^2}) \quad , \quad b \to 0^+ \,.
\end{equation}

\begin{figure}
	\begin{center}
		\includegraphics[width=0.48\textwidth]{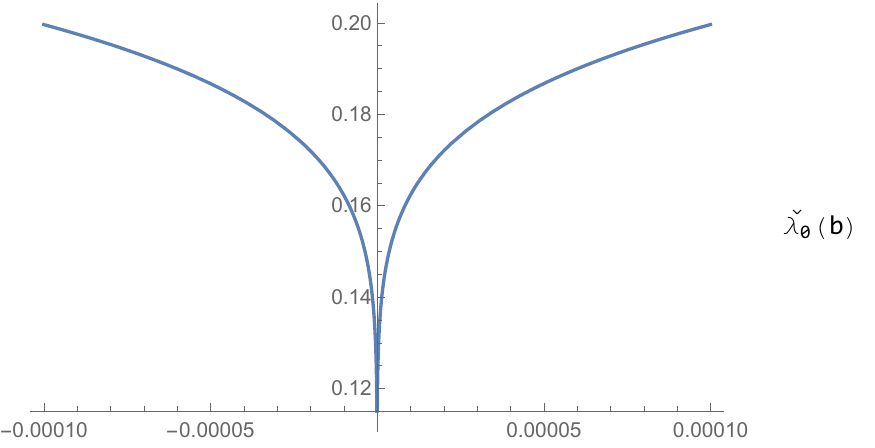}
	\end{center}
	\caption{The magnetic Steklov eigenvalues $\check \lambda_0(b)$.}
\end{figure}

\noindent
This is interesting to compare with the results of \cite{CG} where the authors consider the D-to-N operator for exterior problem with $-\Delta +p$ in the limit $p\rightarrow +0$.

\vspace{0.2cm}
\noindent
We recall that the eigenvalues of the Dirichlet to Neumann map  $\Lambda(b)$ on the interior of the unit disk  are given  by
\begin{equation}\label{explicitvprappel}
	\lambda_n (b) =  n - b +  2b \ \frac{M'(\half, n+1, b)}{M(\half, n+1,b)}\quad , \quad n \geq 0\,.
\end{equation}
For $n\leq 0$, thanks to symmetries,  we have 	$\lambda_n (b) :=	\lambda_{-n} (-b)$. In particular, when the magnetic field $b=0$, we recover the well-known result for the free Laplacian $-\Delta$ on the disk:
\begin{equation}
\lambda_n (0) = |n| \quad , \quad n \in \Z\,.
\end{equation}

\vspace{0.2cm}
\noindent
We now prove the following result:

\begin{thm}\label{weaklimi}
	 For any $b>0$, $\check \Lambda (b)-\Lambda(0) \in {\cal{B}}(L^2(S^{1}))$ and we have
	\begin{equation}
	|| \check \Lambda(b)-\Lambda (0)||_{ {\cal{B}}(L^2(S^{1})) } = \mathcal O(\frac{1}{|\log b|}) \quad , \quad b \to 0^+.
	\end{equation}
\end{thm}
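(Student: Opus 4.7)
Both operators $\check\Lambda(b)$ and $\Lambda(0) = \widehat\Lambda(0)$ are Fourier multipliers on $L^2(S^1)$, diagonal in the orthonormal basis $\{e^{in\theta}/\sqrt{2\pi}\}_{n\in\Z}$, with respective eigenvalues $\check\lambda_n(b)$ given by \eqref{explicitvpext1} and $|n|$ (the usual D-to-N spectrum of the Laplacian on the disk). Consequently, $\check\Lambda(b)-\Lambda(0)$ is itself a Fourier multiplier, and membership in $\mathcal{B}(L^2(S^1))$ is equivalent to the symbol being bounded, with
$$\|\check\Lambda(b)-\Lambda(0)\|_{\mathcal{B}(L^2(S^1))} = \sup_{n\in\Z}|\check\lambda_n(b)-|n||\,.$$
The symmetry \eqref{symetries} reduces the analysis to $n\geq 0$.

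The case $n=0$ is exactly the computation already carried out in \eqref{lecasn=0}, giving $\check\lambda_0(b) = \mathcal{O}(1/|\log b|)$; this is the dominant contribution to the final estimate. For $n\geq 1$, the idea is to recast the ratio appearing in \eqref{explicitvpext1} using the contiguous relations \eqref{ContiguousU}. Combining \eqref{ContiguousU}(b) applied with $(a,c)=(3/2,n+1)$ together with the identity $U(3/2,n+1,b) = 2[U(1/2,n+1,b)-U(1/2,n,b)]$ (which follows from \eqref{ContiguousU}(a) with $(a,c)=(1/2,n+1)$) yields, after a short algebraic manipulation,
$$\check\lambda_n(b) - n \;=\; b \;-\; (2n-1)\,\frac{U(1/2,n,b)}{U(1/2,n+1,b)}\,.$$
The pointwise asymptotics \eqref{asymptotUzero} applied to numerator and denominator then give $\check\lambda_1(b)-1 = b\log b + \mathcal O(b)$ and $\check\lambda_n(b)-n = -\frac{n}{n-1}b + \mathcal O(b^2)$ for $n\geq 2$, all of which are $o(1/|\log b|)$ as $b\to 0^+$.

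The genuine difficulty lies in turning these pointwise-in-$n$ estimates into a uniform bound. For this, I would use the integral representation \eqref{eq:integralrepU} to write
$$\frac{U(1/2,n,b)}{U(1/2,n+1,b)} \;=\; \frac{\int_0^\infty e^{-bt}\,t^{-1/2}(1+t)^{n-3/2}\,dt}{\int_0^\infty e^{-bt}\,t^{-1/2}(1+t)^{n-1/2}\,dt}\;=\;\mathbb{E}_{\nu_{n,b}}\!\left[(1+t)^{-1}\right],$$
where $\nu_{n,b}$ is the probability measure proportional to $e^{-bt}t^{-1/2}(1+t)^{n-1/2}\,dt$. After the substitution $u=bt$ (which factors out the small-$b$ scale) and standard Laplace-type estimates controlling the concentration of $\nu_{n,b}$, one shows that $(2n-1)\mathbb{E}_{\nu_{n,b}}[(1+t)^{-1}] = b + \mathcal O(b|\log b|)$ uniformly for $n\geq 1$ and $b$ small. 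Together with the $n=0$ bound, this gives $\sup_n|\check\lambda_n(b)-|n|| = \mathcal O(1/|\log b|)$, which is the assertion. The crux, and the main obstacle, is precisely this uniformity as $n\to\infty$: one must ensure that the error terms in the $U$-asymptotics do not accumulate in $n$ and overwhelm the smallness in $b$.
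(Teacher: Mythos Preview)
Your reduction to the supremum of $|\check\lambda_n(b)-|n||$ is correct, and the algebraic identity
\[
\check\lambda_n(b)-n \;=\; b \;-\; (2n-1)\,\frac{U(\tfrac12,n,b)}{U(\tfrac12,n+1,b)}
\]
is a nice reformulation (it does follow from \eqref{ContiguousU} as you indicate). The treatment of $n=0$ and of each fixed $n\geq 1$ is fine and matches the paper's.

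The gap is exactly where you say it is: the uniformity in $n$. Your proposed resolution---writing the ratio as an expectation against a probability measure $\nu_{n,b}$ and invoking ``standard Laplace-type estimates''---is only a heuristic at this stage. The substitution $u=bt$ does not by itself isolate a single scale: $\nu_{n,b}$ concentrates near $t\sim n/b$, so both large $n$ and small $b$ interact, and one has to control the width of the concentration and the tails uniformly in $n$. This can presumably be done, but you have not done it, and it is the entire content of the theorem.

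The paper bypasses this by citing a ready-made uniform large-parameter asymptotic (Temme \cite{Te2}, Eq.~(10.4.90)):
\[
z^n\,U(\tfrac12,n+1,z) \;=\; \sqrt{2}\,n^{n-\frac12}\Bigl(1-\tfrac{z}{n+1}\Bigr)^{-1/2}e^{\,z-n-1}\bigl(1+\mathcal O(n^{-1})\bigr),
\]
valid \emph{uniformly} for $z\in(0,1]$. Taking the logarithmic derivative in $z$ (and noting, as the paper does, that the expansion can be differentiated termwise) yields
\[
\frac{n}{z}+\frac{U'(\tfrac12,n+1,z)}{U(\tfrac12,n+1,z)} \;=\; 1+\mathcal O(n^{-1}) \quad\text{uniformly on }(0,1],
\]
and plugging this into \eqref{explicitvpext} gives $\check\lambda_n(b)-n=\mathcal O(b)$ uniformly for large $n$ and all $b\in(0,1]$. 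This is a shorter and sharper route than your expectation argument, because the uniform Laplace analysis has already been carried out in the reference; it also gives the bound $\mathcal O(b)$ for large $n$, rather than the $\mathcal O(b|\log b|)$ you aim for. If you want to keep your expectation formulation, you would effectively be reproving an instance of Temme's result, and you should at least indicate the saddle-point location $t_*\sim n/b$ and a Gaussian-width estimate to make the claimed uniform bound credible.
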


\begin{proof}
Clearly, one has to prove that
\begin{equation}\label{goal}
|\ \check \lambda_n (b) -|n|\  | =  \mathcal O(\frac{1}{|\log b|}) \quad , \quad b \to 0^+,
\end{equation}
uniformly with respect to $n \in \Z$. For simplicity, we restrict ourselves to the case $n \geq 0$. 
First, for small values of $n$, the estimates (\ref{goal}) comes directly from (\ref{explicitvpext1}) and the asymptotic expansions  (\ref{asymptotUzero}), as it has be done in (\ref{lecasn=0}). For instance, one gets:
\begin{eqnarray}
	\check \lambda_1 (b) -1 &=& \mathcal O(b |\log b|) ,\\
	\check \lambda_n (b) -n &=& \mathcal O(b) \quad {\rm{for \ any\ fixed} \ n\geq 2.}
\end{eqnarray}
Now, to get uniform estimates, we use the well-known Laplace method. We obtain (see \cite{Te2}, Eq. (10.4.90))  the following asymptotic expansion which is uniform with respect to $z \in (0,1]$:
\begin{equation}\label{asymtUn}	
	z^n \ U(\half, n+1,z) = \sqrt{2} \ n^{n-\half} \left( 1-\frac{z}{n+1}\right)^{-\half} e^{z-n-1} \ \left(1+ \mathcal O(\frac{1}{n})\right) \quad , \quad n \to + \infty.
\end{equation}	
Moreover, following the proof given in  (\cite{Te2}), it is easy to see that the previous asymptotics can be derivated with respect to $z$.  Thus, taking the logarithmic derivative of (\ref{asymtUn}) with respect to $z$, we get, uniformly for $z \in (0,1] $,
\begin{equation}\label{derivlog}
\frac{n}{z} + \frac{U'(\half, n+1,z)}{U(\half, n+1,z) } = 1+ \mathcal O(\frac{1}{n})\,.
\end{equation}
Then, using (\ref{explicitvpext}), we get, uniformly for $b\in (0,1] $,  
\begin{equation}
	\check \lambda_n (b) = -n+b -2b \ \left( -\frac{n}{b} +  \mathcal O(1) \right) \mbox{ as } n \to +\infty\,.
\end{equation}
 In other words, we have $\check \lambda_n  - n = \mathcal O(b)$ uniformly for large $n$ and the proof is complete.
\end{proof}

\vspace{0.2cm}
\noindent
\begin{rem}
Actually, using the previous asymptotics of  $U(a,c,z)$ as $z \to 0^+$, we can get a more accurate asymptotic estimate for $\check \lambda_n (b)- |n|$ as in (\ref{lecasn=0}). For instance, we have
\begin{eqnarray}
\check \lambda_1 (b) -1  &=& b \log b  + \mathcal O(b), \\ 
\check \lambda_n (b) - n &=& - \frac{n}{n-1} \ b + \mathcal O(b^2) \ ,\ n \geq 2\,.
\end{eqnarray}
It is not clear for us that these asymptotics are uniform with respect to $n$.
\end{rem}

\subsection{Intersecting points for the case  outside  the disk.}

As  in \cite{HN} (see also \cite{HeLe}), the goal is to determine the intersection points between the curves of the magnetic Steklov eigenvalues $\check \lambda_n (b)$ and $\check\lambda_{n+1}(b)$. We restrict  our analysis  to the case of positive intersection points and $b$ is replaced by the variable $z$.  

\vspace{0.2cm}\noindent 
Although the scheme of the proof is the same as in the case of the disk (\cite{HN}) (see the reminder above), we can not avoid to redo the details of the computations which are different, as well as some remainder estimates.

\subsubsection{Characterization of the intersection points.}
\vspace{0.2cm}\noindent
Let $\check z_n$ be the positive intersection point between the curves $\check \lambda_n (b)$ and $\check \lambda_{n+1} (b)$. In other words, one has:  
 \begin{equation} \label{intersection}
   \check\lambda_n(\check z_n)=\check\lambda_{n+1} (\check z_{n})\,.
\end{equation}
Using \eqref{explicitvpext1} we obtain immediately:
\begin{equation}\label{eq:fn1ext}
 \frac{U(\frac 12,n+2,z)- zU(\frac 32,n+3,z)}{U(\frac 12,n+2,z)} = -z \ \frac{U(\frac 32,n+2,z)}{U(\frac 12,n+1,z)}\,.
 \end{equation}
First, let us study the numerator in the left hand side (LHS) of \eqref{eq:fn1ext}.  Using (\ref{ContiguousU}b)  with the parameters $a = 3/2$ and $c=n+2$, we get:
\begin{equation}
U(\frac 12,n+2,z)-z \ U(\frac 32,n+3,z)+ (n+\frac 12)\ U(\frac 32,  n+2,z)=0\,.
\end{equation}
Hence we have:
 \begin{equation}
 (LHS)= - (n+\frac 12) \ \frac{ U(\frac 32,n+2,z)}{U(\frac 12,n+2,z)}\,.
\end{equation}
Since $U(\frac 32,n+2,z) \not=0$ thanks to the integral representation  (\ref{eq:integralrepU}), we consequently obtain at the intersection point:
\begin{equation}\label{eq:intersaa}
(n+\frac 12)\ U(\frac 12,n+1,z)= z  \ U (\frac 12,n+2,z)\,.
\end{equation}
Now, using (\ref{ContiguousU}b) with $a=\frac 12$, $c=n+1$, we get:
\begin{equation}\label{eq:intersz}
 U(-\frac 12, n+1,z) +(n + \frac12)\, U (\frac 12,n+1,z) - z\ U(\frac 12,n+2,z)=0\,.
\end{equation}
Thus, we get $\check\lambda_n(z)=\check\lambda_{n+1} (z)$ if and only if $U(-\frac 12,n+1,z)=0$. 

\vspace{0.4cm}\noindent
Hence we have the following result (to compare with Proposition 4.1 in \cite{HN}):
 \begin{prop}\label{characteriztionext}
For any $n\geq 0$, there is a unique positive intersection point $\check z_n$  between the curves $\check \lambda_n (b)$ and $\check \lambda_{n+1}(b)$. Moreover, one has:
\begin{equation}
 U(-\frac 12,n+1,\check z_n)=0\,.
\end{equation}	
\end{prop}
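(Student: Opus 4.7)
The plan is to leverage the equivalence already derived in the excerpt: on $(0,+\infty)$, the equation $\check\lambda_n(z)=\check\lambda_{n+1}(z)$ is equivalent to $\varphi_n(z):=U(-\half,n+1,z)=0$, since all the intermediate denominators $U(\half,n+1,z)$ and $U(\half,n+2,z)$ are strictly positive on $(0,+\infty)$ by the integral representation (\ref{eq:integralrepU}) applied with $a=\half>0$ (so that every step in the manipulation (\ref{eq:fn1ext})--(\ref{eq:intersz}) can be reversed). It therefore suffices to prove that $\varphi_n$ has exactly one positive zero.

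For uniqueness, I would establish strict monotonicity of $\varphi_n$ via the derivative formula (\ref{derivU}):
$$\varphi_n'(z) = \half\, U(\half,n+2,z) > 0 \quad \text{for } z>0,$$
again by the same integral representation. Hence $\varphi_n$ is strictly increasing on $(0,+\infty)$ and has at most one zero.

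For existence, I would examine the two endpoint limits. As $z \to +\infty$, the asymptotic (\ref{asymptotiqueU}) with $a=-\half$, $c=n+1$ yields $\varphi_n(z) = z^{1/2}\bigl(1+\mathcal O(z^{-1})\bigr) \to +\infty$. As $z \to 0^+$, the three sub-cases of (\ref{asymptotUzero}) each produce $\varphi_n(z)\to -\infty$: when $n=0$ this comes from the $\log z$ leading term with positive coefficient $-1/\Gamma(-\half) = 1/(2\sqrt{\pi})$; for $n=1$ and $n\geq 2$ the leading $z^{-1}$ (respectively $z^{-n}$) term carries the negative coefficient $1/\Gamma(-\half) = -1/(2\sqrt{\pi})$. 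The intermediate value theorem, combined with the monotonicity above, then furnishes the unique $\check z_n$. The only mild subtlety is the case-split in (\ref{asymptotUzero}) forced by the discrete parameter $n+1$, but the sign $\Gamma(-\half)<0$ propagates uniformly through all three regimes, so no genuine obstacle arises.
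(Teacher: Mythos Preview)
Your proof is correct and follows essentially the same route as the paper: strict monotonicity of $\varphi_n$ via the derivative formula and the integral representation, combined with the endpoint asymptotics from (\ref{asymptotUzero}) and (\ref{asymptotiqueU}), yielding existence and uniqueness by the intermediate value theorem. The only imprecision is that for $n\geq 2$ the leading coefficient is $\Gamma(n)/\Gamma(-\half)=-\frac{(n-1)!}{2\sqrt{\pi}}$ rather than $1/\Gamma(-\half)$, but since $\Gamma(n)>0$ the sign---which is all that matters---is unaffected.
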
 
	
\begin{proof}  For $x \in \R_+^*$, we define $f(x) = U(-\half; n+1, x)$. So, we get immediately $f'(x)= \half \ U(\half, n+2,x)$ which is positive thanks again to the integral representation  (\ref{eq:integralrepU}). Thus, $f$ is strictly increasing on $(0,+\infty)$. Now, using (\ref{asymptotUzero}) and $\Gamma(-\half)= -2 \sqrt{\pi}$, as well  (see \cite{MOS1966}, p. 288) as the following asymptotic expansions  as $x \to 0^+\,$:
\begin{equation}
U\left(-\half,n+1,x\right)=- \frac{(n-1)!}{2\sqrt{\pi}}\ x^{-n} +\mathcal O\left(x^{1-n}\right) \quad , \quad n \geq 2\,,
\end{equation}
we see that $U(-\half; n+1, x) \to -\infty$ as $x \to 0^+$, (see Figure 4 in the case $n=4$). Moreover, using (\ref{asymptotiqueU}), we get $U(-\half; n+1, x) \to +\infty$ as $x \to +\infty$. This concludes the proof.
\end{proof}

\begin{figure}
	\begin{center}
		\includegraphics[width=0.48\textwidth]{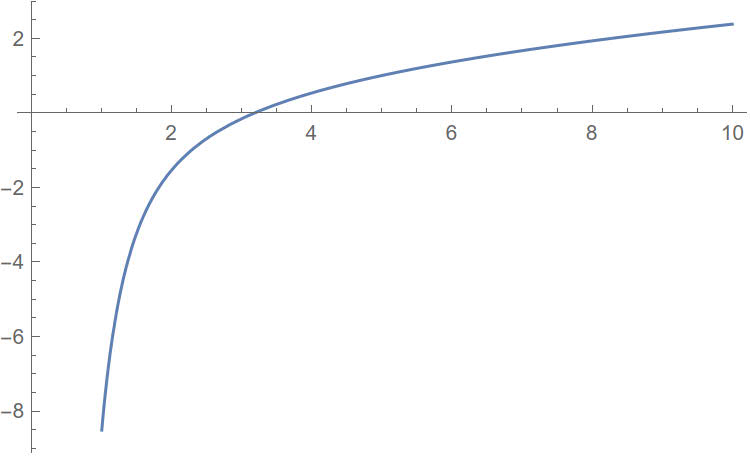}
	\end{center}
	\caption{The graph of $U(-\half, 5, x)$.}
\end{figure}

 \subsubsection{The (G) formula.}
 
 We want to compute for $z=\check z_n$
 \begin{equation}
 \check \lambda_n (z)= -n + z + z \frac{U(\frac 32,n+2,z)}{U(\frac 12,n+1,z)}\,.
 \end{equation}
 We use (\ref{ContiguousU}a)  with the parameters  $a=\frac 12$ and $c=n+2$ and  get
\begin{equation}\label{eq:intersy}
U(\frac 32,n+2,z)= 2U(\frac 12,n+2,z) - 2U(\frac 12,n+1,z)\,.
\end{equation}
Combining with \eqref{eq:intersz} and using Proposition \ref{characteriztionext}, we get, for $z=\check z_n$, 
\begin{equation}
 z \ U(\frac 32,n+2,z)= 2z \ U(\frac 12,n+2,z) - 2z \ U(\frac 12,n+1,z)= (2n+1-2z) \ U(\frac 12,n+1,z)\,.
\end{equation}
Coming back to the definition of  $\check\lambda_n$, we obtain (similarly to the (F)-formula in \cite{HN}):
 \begin{equation}\label{eq:magic}
(G)\quad  \check \lambda_n (\check z_n) = n+1 - \check z_n \,.
 \end{equation}
 
Let us also observe that, using $(G)$-formula and since D-to-N map is a positive operator,  one has, for any $ n \geq 0$,
\begin{equation}\label{eq:ineq}
	\check z_n <  n+1\,.
\end{equation}

\subsubsection{Computation of $\check \lambda'_n(z)$ and application to strong diamagnetism.}

Using (\ref{explicitvpext}), one has:
\begin{equation}
	\check \lambda_n(z) = -n + z - 2z \ \frac{U'(\half, n+1, z)}{U(\half, n+1, z)} \,.
\end{equation}
In what follows, to simplify the exposition, we set $U = U=U(\frac 12,n+1,z)$. By differentiation, we get immediately
\begin{equation}
	\check \lambda'_n(z)  = \frac{U^2 -2UU'-2zU''U +2zU'^2 }{U^2}\,.
\end{equation}
Since the confluent hypergeometric function  $U$ satisfies the differential equation (\ref{KummerODE}):
\begin{equation}
z U'' + (n+1-z) U' - \frac 12 U=0\,,
\end{equation}
we get 	
\begin{equation}\label{prime}
	\check \lambda'_n(z)=  2 \ \frac{\left((n-z)\ U +z\ U'\right)U'}{U^2}\,.
\end{equation}
Let us examine the numerator of the (RHS) of (\ref{prime}). Using (\ref{derivU}), we have:
\begin{equation}
	(n-z) \ U+z\  U' = 	(n-z)\ U(\frac 12,n+1,z) -\half \ U(\frac 32,n+2,z).
\end{equation}
Then, using \eqref{eq:intersy}, we get
\begin{eqnarray}
	(n-z)\  U +z\  U' &=& n\ U(\frac 12,n+1,z) - z\ U(\frac 12,n+2,z) \\
               	&=&  - ( U(-\half, n+1,z) +\half \ U(\half, n+1,z)),
\end{eqnarray}
where we have used \eqref{eq:intersz} in the last equality. As a conclusion, using again (\ref{ContiguousU}a)  with the parameters $a=-\frac 12$ and $c=n+1$, we have obtained:

\vspace{0.2cm}

\begin{prop}\label{characteriztionU}
For $ n \geq 0$, we have 
\begin{eqnarray}\label{eq:lambdaprimeU}
\check \lambda'_n(z) &=&  -2\ \frac{U'(\half, n+1, z)\ ( U(-\half, n+1, z) + \frac 12 \ U(\half, n+1, z) )}{(U(\half, n+1, z))^2}     \,.  \\ 
&=& -2\ \frac{U'(\half, n+1, z) \, U(-\frac 12,n,z) }{(U(\half, n+1, z))^2}     \,. \label{lambdaprime2U}
\end{eqnarray}
\end{prop}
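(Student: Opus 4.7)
The plan is to compute $\check\lambda_n'(z)$ by direct differentiation of the explicit formula from \eqref{explicitvpext}, use the Kummer ODE \eqref{KummerODE} at parameters $(a,c)=(\tfrac12,n+1)$ to eliminate the second derivative $U''(\tfrac12,n+1,z)$ that appears, and finally appeal to the contiguous relations \eqref{ContiguousU} together with the derivative identity \eqref{derivU} to rewrite the result in the claimed closed form.

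Setting $U:=U(\tfrac12,n+1,z)$ for brevity, the quotient rule yields the numerator $U^{2}-2UU'-2zUU''+2z(U')^{2}$. The Kummer equation $zU''+(n+1-z)U'-\tfrac12 U=0$ furnishes $-2zUU''=2(n+1-z)UU'-U^{2}$, and after substitution the numerator collapses to $2U'\bigl((n-z)U+zU'\bigr)$. The main algebraic work is then to identify $(n-z)U+zU'$ with a simple hypergeometric expression. Using \eqref{derivU} to rewrite $U'$ as $-\tfrac12 U(\tfrac32,n+2,z)$ and then the identity \eqref{eq:intersy}, which is a specialization of one of the contiguous relations at $(a,c)=(\tfrac12,n+2)$, one reduces the expression to $nU(\tfrac12,n+1,z)-zU(\tfrac12,n+2,z)$. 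Applying \eqref{eq:intersz}, which comes from the other contiguous relation at $(a,c)=(\tfrac12,n+1)$, then produces $(n-z)U+zU'=-U(-\tfrac12,n+1,z)-\tfrac12 U(\tfrac12,n+1,z)$, yielding the first form \eqref{eq:lambdaprimeU}.

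For the second form \eqref{lambdaprime2U}, a single further application of the first contiguous relation at $(a,c)=(-\tfrac12,n+1)$ gives $U(-\tfrac12,n+1,z)-U(-\tfrac12,n,z)+\tfrac12 U(\tfrac12,n+1,z)=0$, so the bracket appearing in the first form collapses to $U(-\tfrac12,n,z)$. The argument is purely algebraic manipulation of confluent hypergeometric identities and presents no analytic obstacle; the only point requiring some care is the bookkeeping of parameter shifts $(a,c)\mapsto(a\pm1,c\pm1)$ when invoking \eqref{ContiguousU}, to ensure that each relation is applied at the correct arguments and with the correct sign.
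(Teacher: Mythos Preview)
Your proof is correct and follows essentially the same approach as the paper: differentiate \eqref{explicitvpext} directly, eliminate $U''$ via the Kummer ODE to obtain $\check\lambda_n'(z)=2U'\bigl((n-z)U+zU'\bigr)/U^2$, then reduce $(n-z)U+zU'$ step by step using \eqref{derivU}, \eqref{eq:intersy}, and \eqref{eq:intersz}, and finally invoke (\ref{ContiguousU}a) at $(a,c)=(-\tfrac12,n+1)$ for the second form. The sequence of identities and parameter shifts matches the paper's computation exactly.
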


\vspace{0.2cm}
\noindent
Now, using the integral representation formula (\ref{eq:integralrepU}), it is easy to see that  $$U'(\half, n+1,z) = -\half\  U(\frac32, n+2,z) <0$$ for positive real $z$. Moreover,  for $n \geq 1$, thanks to Proposition \ref{characteriztionext}, the map  $z \mapsto U (-\frac 12,n,z)$ is increasing with a unique zero at $\check z_{n-1}$. Thus, we obtain:

\begin{prop}
For $n \geq 1$, one has:
\begin{itemize}
\item $\check \lambda'_n (z_{n-1})	=0$. 
\item $\check \lambda'_n (z)>0$ on $(\check z_{n-1}, + \infty)$ and $\check \lambda'_n (z)<0$ on $(0, \check z_{n-1})$. 
\item  $\check z_{n-1}$ is the unique minimum of $\check \lambda_n(z)$.
\item  $\check z_{n-1} < \check z_{n}$.
\item  
 $\check \lambda_{n}(z)$ is increasing between $\check z_{n-1}$ and $\check z_{n}$. 
 \item On the interval  $[\check z_{n-1},\check z_n]$,  we have 
$
\check \lambda^{DN}(z)= \check \lambda_n(z)\,.
$
\end{itemize}
\end{prop}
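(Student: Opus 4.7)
The first three assertions can be read off directly from the factorised formula \eqref{lambdaprime2U} of Proposition~\ref{characteriztionU}. The integral representation \eqref{eq:integralrepU} gives $U'(\half,n+1,z)=-\half\,U(\frac 32,n+2,z)<0$ on $(0,+\infty)$, so the sign of $\check\lambda'_n(z)$ is exactly the sign of $U(-\half,n,z)$. Repeating the argument of Proposition~\ref{characteriztionext} with $n-1$ in place of $n$ (allowed since $n\geq 1$), the map $z\mapsto U(-\half,n,z)$ is strictly increasing on $(0,+\infty)$, tends to $-\infty$ as $z\to 0^+$, and has a unique positive zero, equal to $\check z_{n-1}$. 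Hence $\check\lambda'_n$ vanishes only at $\check z_{n-1}$, is negative on $(0,\check z_{n-1})$ and positive on $(\check z_{n-1},+\infty)$, so $\check z_{n-1}$ is the unique minimum of $\check\lambda_n$.

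For the inequality $\check z_{n-1}<\check z_n$, I would rewrite (\ref{ContiguousU}a) with $a=-\half$ and $c=n+1$ as
\begin{equation*}
U(-\half,n+1,z)=U(-\half,n,z)-\half\,U(\half,n+1,z),
\end{equation*}
and evaluate at $z=\check z_{n-1}$. The first term vanishes by Proposition~\ref{characteriztionext} while $U(\half,n+1,\check z_{n-1})>0$ by \eqref{eq:integralrepU}, so $U(-\half,n+1,\check z_{n-1})<0$. Since $z\mapsto U(-\half,n+1,z)$ is strictly increasing with unique positive zero $\check z_n$, this sign forces $\check z_n>\check z_{n-1}$. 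That $\check\lambda_n$ is increasing on $[\check z_{n-1},\check z_n]$ is then a direct consequence of the sign analysis of the previous paragraph.

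The identification $\check\lambda^{DN}(z)=\check\lambda_n(z)$ on $[\check z_{n-1},\check z_n]$ is the main obstacle, since one must compare $\check\lambda_n(z)$ with every other eigenvalue. My plan is to first show, for every $k\geq 0$, that $\check\lambda_k<\check\lambda_{k+1}$ on $(0,\check z_k)$ and $\check\lambda_k>\check\lambda_{k+1}$ on $(\check z_k,+\infty)$. The small-$b$ asymptotics $\check\lambda_k(b)\to k$ (read off \eqref{asymptotUzero}, exactly as in \eqref{lecasn=0}), combined with the uniqueness of the positive intersection (Proposition~\ref{characteriztionext}), yields $\check\lambda_k<\check\lambda_{k+1}$ on $(0,\check z_k)$. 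The reversed inequality past $\check z_k$ then follows from $\check\lambda'_{k+1}(\check z_k)=0$ (minimum of $\check\lambda_{k+1}$) versus $\check\lambda'_k(\check z_k)>0$ (since $\check z_{k-1}<\check z_k$): $\check\lambda_k$ strictly overtakes $\check\lambda_{k+1}$ at $\check z_k$, and uniqueness of the intersection keeps it above afterwards. Chaining these inequalities along the increasing sequence $(\check z_k)_k$, for $z\in[\check z_{n-1},\check z_n]$ one has $z\geq\check z_k$ for $k\leq n-1$, giving $\check\lambda_0(z)>\cdots>\check\lambda_{n-1}(z)\geq\check\lambda_n(z)$, and $z\leq\check z_k$ for $k\geq n$, giving $\check\lambda_n(z)\leq\check\lambda_{n+1}(z)<\cdots$. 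Finally, the negative-index eigenvalues, related to the positive ones by the symmetry \eqref{symetries} $\check\lambda_{-k}(b)=\check\lambda_k(-b)$, are ruled out by checking directly on \eqref{explicitvpext} that $\check\lambda_k(-b)>\check\lambda_k(b)$ when $b>0$ (for instance through the monotonicity in $b$ of the ratio $b\,U'(\half,k+1,b)/U(\half,k+1,b)$), so that $\check\lambda^{DN}(b)=\inf_{k\geq 0}\check\lambda_k(b)$ for $b>0$, concluding the identification.
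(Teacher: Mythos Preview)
Your treatment of the first five items is correct and matches the paper's (very terse) argument: the sign analysis of $\check\lambda'_n$ via Proposition~\ref{characteriztionU} and the monotonicity of $z\mapsto U(-\half,n,z)$ is exactly what the paper uses, and your explicit derivation of $\check z_{n-1}<\check z_n$ from the contiguous relation (\ref{ContiguousU}a) is cleaner than what the paper writes down (it simply states the proposition after the sign analysis, deferring to \cite{HN}). The chaining argument for $\check\lambda^{DN}=\check\lambda_n$ on $[\check z_{n-1},\check z_n]$ over nonnegative indices is also correct; just note that at the endpoint $k=0$ your justification ``since $\check z_{k-1}<\check z_k$'' does not apply, but one checks directly from \eqref{lambdaprime2U} that $U(-\half,0,z)=z\,U(\half,2,z)>0$, so $\check\lambda'_0>0$ on $(0,\infty)$ and the sign change at $\check z_0$ still goes through.

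The genuine gap is in the negative-index step. Formula \eqref{explicitvpext} is derived from the decaying solution $e^{-br^2/2}r^nU(\half,n+1,br^2)$, which requires $b>0$; you cannot plug $-b$ into it, so ``checking directly on \eqref{explicitvpext} that $\check\lambda_k(-b)>\check\lambda_k(b)$'' is not meaningful, and the suggested ``monotonicity in $b$ of the ratio'' does not give the comparison. What you actually need is to compare $\check\lambda_{-k}(b)$ and $\check\lambda_k(b)$ for $b>0$ using \eqref{explicitvpext} with $n=-k$ (still valid since $b>0$). A clean way is to apply the Kummer transformation $U(a,c,z)=z^{1-c}U(1+a-c,2-c,z)$ to rewrite $\check\lambda_{-k}(b)=-k+b-2b\,U'(\half+k,k+1,b)/U(\half+k,k+1,b)$, and then compare the two logarithmic derivatives through the integral representation \eqref{eq:integralrepU}: the ratio of the two weights is $(t/(1+t))^k$, an increasing function of $t$, so a standard correlation (likelihood-ratio) inequality yields $-U'(\half+k,k+1,b)/U(\half+k,k+1,b)\geq -U'(\half,k+1,b)/U(\half,k+1,b)$, hence $\check\lambda_{-k}(b)\geq\check\lambda_k(b)$. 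With this fixed, your argument for the last item is complete (and in fact more detailed than the paper, which leaves this comparison implicit).
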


\vspace{0.1cm}\noindent
As a final result, we obtain: 
 		
\begin{thm}
The map $z \mapsto  \check \lambda^{DN}(z)$ is increasing on $(0,+\infty)$.
\end{thm}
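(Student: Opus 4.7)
The plan is to glue together the monotone pieces supplied by the preceding proposition. For every $n \geq 1$ that proposition gives $\check \lambda^{DN}(z) = \check \lambda_n(z)$ on $[\check z_{n-1}, \check z_n]$, with $\check \lambda_n$ strictly increasing there; continuity of $\check \lambda^{DN}$ at each junction $\check z_n$ is automatic from the defining intersection identity $\check \lambda_n(\check z_n) = \check \lambda_{n+1}(\check z_n)$. To guarantee that these intervals exhaust $[\check z_0, +\infty)$, I would use the $(G)$-formula \eqref{eq:magic}: the sequence $(\check z_n)_{n \geq 0}$ is strictly increasing, hence admits a limit $L \in (0,+\infty]$; if $L$ were finite, then $\check \lambda^{DN}(\check z_n) = n+1 - \check z_n \to +\infty$ would contradict the obvious bound $\check \lambda^{DN}(\check z_n) \leq \check \lambda_0(\check z_n) \to \check \lambda_0(L) < +\infty$. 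Hence $\check z_n \to +\infty$, and $\check \lambda^{DN}$ is continuous and strictly increasing on $[\check z_0, +\infty)$.

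It remains to extend this to the initial interval $(0, \check z_0]$. I would first establish that $\check \lambda_0$ itself is strictly increasing on $(0, +\infty)$. Applying Proposition \ref{characteriztionU} with $n = 0$ gives
\begin{equation*}
	\check \lambda_0'(z) \;=\; -2\,\frac{U'(\tfrac12, 1, z)\, U(-\tfrac12, 0, z)}{\bigl(U(\tfrac12, 1, z)\bigr)^2}.
\end{equation*}
By \eqref{derivU} and \eqref{eq:integralrepU}, $U'(\tfrac12,1,z) = -\tfrac12 U(\tfrac32,2,z) < 0$ and $U(\tfrac12,1,z) > 0$. For the remaining factor, the Kummer-type identity $U(a,c,z) = z^{1-c} U(1+a-c, 2-c, z)$ applied at $(a,c) = (-\tfrac12, 0)$ yields $U(-\tfrac12, 0, z) = z\, U(\tfrac12, 2, z) > 0$, again by \eqref{eq:integralrepU}. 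Hence $\check \lambda_0'(z) > 0$ for every $z > 0$.

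To conclude, I would identify $\check \lambda^{DN}(z) = \check \lambda_0(z)$ on $(0, \check z_0]$ by a comparison argument: for any $n \geq 1$, the unique minimum of $\check \lambda_n$ lies at $\check z_{n-1} \geq \check z_0$, so on $(0, \check z_0]$ the function $\check \lambda_n$ is decreasing, and
\begin{equation*}
	\check \lambda_n(z) \;\geq\; \check \lambda_n(\check z_{n-1}) \;=\; \check \lambda^{DN}(\check z_{n-1}) \;\geq\; \check \lambda^{DN}(\check z_0) \;=\; \check \lambda_0(\check z_0) \;\geq\; \check \lambda_0(z),
\end{equation*}
where the middle inequality uses the monotonicity of $\check \lambda^{DN}$ on $[\check z_0,+\infty)$ just secured, and the last uses that of $\check \lambda_0$. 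The main obstacle is the corresponding comparison with the negative-index branches $\check \lambda_n$, $n \leq -1$, since the machinery in Section~\ref{s4} has been carried out only for $n \geq 0$; I would resolve it either by repeating the intersection-point and monotonicity analysis for $n \leq -1$ (the same integral representations and contiguous relations for $U$ remain available after the sign change in the differential equation), or by establishing an a priori inequality of the type $\check \lambda_n(b) \geq \check \lambda_{-n}(b)$ for $b > 0$ and $n \leq -1$, in the spirit of \cite{HN}. Combining the three steps, we obtain strict monotonicity of $z \mapsto \check \lambda^{DN}(z)$ on all of $(0,+\infty)$.
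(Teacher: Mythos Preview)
Your approach matches the paper's: the theorem is stated there as a direct consequence of the preceding proposition, with no further argument. You have filled in details the paper leaves implicit---the exhaustion $\check z_n \to +\infty$ via the $(G)$-formula and the bound $\check\lambda^{DN}(\check z_n)\leq \check\lambda_0(\check z_n)$, the treatment of the initial interval $(0,\check z_0]$, and the verification that $\check\lambda_0'>0$ from Proposition~\ref{characteriztionU} (your use of the Kummer identity $U(-\tfrac12,0,z)=z\,U(\tfrac12,2,z)$ is correct, and then the integral representation \eqref{eq:integralrepU} gives the sign). Your chain of inequalities for $n\geq 1$ on $(0,\check z_0]$ is also sound.

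The negative-index comparison you flag is a genuine loose end, but the paper does not close it either: the last bullet of the preceding proposition already asserts $\check\lambda^{DN}(z)=\check\lambda_n(z)$ on $[\check z_{n-1},\check z_n]$ with $\check\lambda^{DN}$ defined as an infimum over all $n\in\Z$, so the comparison with negative indices is tacitly assumed there, presumably by analogy with the inequality $\lambda_n(b)\leq\lambda_{-n}(b)$ for $b>0$, $n\geq 0$ established in \cite{HN} for the interior case. Your proposal is therefore correct modulo the same point the paper itself leaves to its companion work.
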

Hence we have strong diamagnetism for the exterior problem of the disk.\\

\noindent In addition, we have:

\begin{prop}
	For $n \geq 1$, one has $\check \lambda_n'' (\check z_{n-1}) > 0$. 
\end{prop}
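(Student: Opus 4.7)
The plan is to differentiate the expression for $\check\lambda_n'$ given in Proposition \ref{characteriztionU} and exploit the fact that one of its factors vanishes exactly at the critical point, so that the product rule collapses to a single surviving term whose sign is easy to control.

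More precisely, I would start from the factorisation
\[
\check \lambda'_n(z) = g(z)\, h(z), \qquad g(z) := -2\,\frac{U'(\tfrac 12, n+1, z)}{(U(\tfrac 12, n+1, z))^2}, \qquad h(z) := U(-\tfrac 12, n, z),
\]
which is exactly formula \eqref{lambdaprime2U}. By Proposition \ref{characteriztionext} applied to the index $n-1$ (valid because $n\geq 1$), we have $h(\check z_{n-1}) = U(-\tfrac 12, n, \check z_{n-1}) = 0$. Differentiating and evaluating at $z=\check z_{n-1}$, the term $g'(z)h(z)$ drops out, leaving
\[
\check \lambda''_n(\check z_{n-1}) = g(\check z_{n-1})\, h'(\check z_{n-1}).
\]

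To compute $h'$, I would apply the derivation rule \eqref{derivU}, namely $U'(a,c,z) = -a\,U(a+1, c+1, z)$, with $a=-\tfrac 12$ and $c=n$, which gives $h'(z) = \tfrac 12 \,U(\tfrac 12, n+1, z)$. Plugging this in yields the clean identity
\[
\check \lambda''_n(\check z_{n-1}) = -\,\frac{U'(\tfrac 12, n+1, \check z_{n-1})}{U(\tfrac 12, n+1, \check z_{n-1})}.
\]

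Finally, the sign analysis is immediate from the integral representation \eqref{eq:integralrepU}: for $z>0$ and for any $c\in\mathbb R$, the value $U(\tfrac 12, c, z)$ is strictly positive, while $U'(\tfrac 12, n+1, z) = -\tfrac 12\, U(\tfrac 32, n+2, z)$ via \eqref{derivU} is strictly negative. Hence $\check \lambda''_n(\check z_{n-1}) > 0$, as required. There is essentially no obstacle here — the only thing to check carefully is that the derivative formula and the integral representation apply at the relevant parameter values, which they do since $\check z_{n-1}>0$ and the second arguments are integers $\geq 2$.
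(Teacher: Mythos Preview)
Your proof is correct and follows essentially the same route as the paper: both differentiate the factorised formula \eqref{lambdaprime2U}, use $U(-\tfrac12,n,\check z_{n-1})=0$ to collapse the product rule, and arrive at $\check\lambda_n''(\check z_{n-1})=-U'(\tfrac12,n+1,\check z_{n-1})/U(\tfrac12,n+1,\check z_{n-1})$. The only difference is in the final sign argument: you conclude directly from the integral representation, whereas the paper goes a step further and rewrites this ratio via \eqref{explicitvpext} and the $(G)$-formula to obtain the explicit value $\check\lambda_n''(\check z_{n-1})=(n-\check z_{n-1})/\check z_{n-1}$, whose positivity then follows from \eqref{eq:ineq}.
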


\begin{proof}
 Using Proposition \ref{characteriztionU}, one has $U(-\half, n, \check z_{n-1})=0$. Thus, using \eqref{eq:lambdaprimeU} a straightforward calculation shows that: 
\begin{equation}
\check \lambda_n'' (z_{n-1}) = -2 \ \frac{U'(\half, n+1, \check z_{n-1}) \ U'(-\half, n, \check z_{n-1}) }{(U(\half, n+1, \check z_{n-1}))^2}.
\end{equation}
Now, we have: 
\begin{equation}
	U'(-\half, n, \check z_{n-1}) =  \frac{1}{2} \ U(\half, n+1, \check z_{n-1})\,.
\end{equation}
It follows that:
\begin{equation}
\check \lambda_n''  (\check z_{n-1}) = -    \frac{U'(\half, n+1, \check z_{n-1})}{U(\half, n+1, \check z_{n-1})}\,.
\end{equation}
So, using (\ref{explicitvpext}),  we get immediately:
\begin{eqnarray}
\check \lambda_n'' (\check z_{n-1}) &=& \frac{\check \lambda_n (\check z_{n-1}) + n - \check z_{n-1}}{2 \check z_{n-1}}  
\nonumber \\
	&=&  \frac{\check \lambda_{n-1} (\check z_{n-1}) + n - \check z_{n-1}}{2 \check z_{n-1}} \nonumber \\ &= &  \frac{n -\check z_{n-1}}{\check z_{n-1}} >0\,,
\end{eqnarray}
where we have used the characterization of the intersection point, \eqref{eq:ineq} 	and the (G)-formula.
\end{proof}


 \subsubsection{Asymptotics of $\check z_n$. }
 
 
The next goal  is to prove the following asymptotic expansion (to compare with the asymptotics of $z_n$ in \cite{HN}):

\begin{prop}\label{completeasymptU}
As $n\rightarrow +\infty$, $\check z_n$ admits the asymptotics 
\begin{equation}\label{eq:twotermsU} 
\check z_n \sim n - \alpha  \sqrt{n}  +   \frac{\alpha^2 +2}{3} + \sum_{j\geq 1}\check  \alpha_j n^{-\frac j2}\,.
\end{equation}
where the $\check \alpha_j$'s  are  real constants.
\end{prop}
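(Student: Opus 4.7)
\medskip\noindent
\emph{Proof plan.} The strategy mirrors the analysis of the interior case $z_n$ in \cite{HN}, the key point being that by Proposition \ref{characteriztionext} the quantity $\check z_n$ is the unique positive zero of $z \mapsto U(-\tfrac12, n+1, z)$. Since we expect $\check z_n \sim n$, the natural rescaling is $z = n + y\sqrt{n}$ with $y$ in a bounded neighborhood of $-\alpha$, and one sets $W_n(y) := U(-\tfrac12, n+1, n + y\sqrt{n})$. Starting from the Kummer equation \eqref{KummerODE} with $a = -\tfrac12$, $c = n+1$, a direct change of variable yields
\begin{equation*}
\bigl(1 + y/\sqrt{n}\bigr) W_n''(y) + \bigl(1/\sqrt{n} - y\bigr) W_n'(y) + \tfrac12 W_n(y) = 0.
\end{equation*}
After the gauge change $W_n(y) = e^{y^2/4} V_n(y)$, the equation for $V_n$ is a perturbation of size $\mathcal{O}(1/\sqrt{n})$ of the parabolic cylinder equation \eqref{ODEDnu0} with $\mu = \tfrac12$, namely $V'' + (1 - y^2/4) V = 0$.

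\medskip\noindent
The next step is to establish a uniform asymptotic expansion
\begin{equation*}
V_n(y) = c_n \Bigl( D_{1/2}(y) + \frac{\phi_1(y)}{\sqrt{n}} + \frac{\phi_2(y)}{n} + \cdots \Bigr),
\end{equation*}
valid uniformly on compact subsets of $\mathbb R$, the $\phi_k$ being determined recursively by variation of parameters from the hierarchy of equations produced by injecting the ansatz into the perturbed ODE. Among the two independent solutions of the parabolic cylinder equation, $D_{1/2}(y)$ is the correct branch because $U(-\tfrac12, n+1, z)$ is the recessive solution of \eqref{KummerODE} at $z = +\infty$ (no exponential growth), whereas $e^{y^2/4} D_{1/2}(-y)$ grows like $e^{y^2/2}$ as $y \to +\infty$. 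A matching argument in an intermediate regime $1 \ll y \ll \sqrt{n}$, using the asymptotic \eqref{asymptotiqueU}, pins down the normalization $c_n$ and the choice of branch.

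\medskip\noindent
Once the expansion is at hand, the equation $W_n(y) = 0$ becomes
\begin{equation*}
D_{1/2}(y) + \frac{\phi_1(y)}{\sqrt{n}} + \frac{\phi_2(y)}{n} + \cdots = 0,
\end{equation*}
and the full expansion $y = -\alpha + \mu_1/\sqrt{n} + \mu_2/n + \cdots$ follows by the implicit function theorem applied order by order. The leading order gives $D_{1/2}(-\alpha) = 0$, which determines $\alpha$; the next order yields $\mu_1 = -\phi_1(-\alpha)/D_{1/2}'(-\alpha)$, which after explicit simplification using the recurrence relations \eqref{recurrenceDnu}, \eqref{recurrenceDnu1}, \eqref{recurrenceDnu2} for parabolic cylinder functions (expressing $\phi_1$ as a linear combination of $D_{1/2}$ and $D_{-1/2}$ with polynomial coefficients, and exploiting $D_{1/2}(-\alpha) = 0$) reduces to $\mu_1 = (\alpha^2+2)/3$. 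Iterating produces the remaining coefficients $\check\alpha_j$, and translating back through $\check z_n = n + y\sqrt{n}$ gives \eqref{eq:twotermsU}.

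\medskip\noindent
The main obstacle is the rigorous control of the remainders in the uniform asymptotic expansion of $V_n$: in the double-scaling regime where both $c = n+1$ and $z$ are of the same order $n$, the uniform estimate \eqref{asymtUn} (restricted to $z \in (0,1]$) does not apply, and one must either appeal to a known uniform expansion of $U$ in this joint-large-parameter regime (e.g. \cite{Te2} or the Olver machinery) or carry out a direct analysis based on a (regularized) integral representation of $U(-\tfrac12, n+1, z)$ via the Laplace / steepest descent method around the relevant saddle point. The explicit evaluation $\phi_1(-\alpha) = -\tfrac{\alpha^2+2}{3}\, D_{1/2}'(-\alpha)$ is then a routine symbolic computation.
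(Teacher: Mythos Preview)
Your plan is sound and would yield the result, but it follows a genuinely different route from the paper's. The paper does not analyze $U(-\tfrac12,n+1,z)$ directly. Instead it rewrites the intersection condition via \eqref{eq:intersaa} as
\[
n+\tfrac12-z \;=\; -z\,\frac{U'(\tfrac12,n+1,z)}{U(\tfrac12,n+1,z)},
\]
introduces $\check\beta=(n+\tfrac12-z)/\sqrt n$, and then expands the ratio $U'/U$ using the \emph{integral representation} \eqref{eq:integralrepU}, which is available precisely because the parameter $a=\tfrac12$ is positive. After the change of variable $s=\sqrt n\,t$ one obtains two Laplace-type integrals $\check\sigma_n,\check\tau_n$ whose asymptotic expansions in powers of $n^{-1/2}$ follow from a direct stationary-phase/Taylor argument; the leading terms are recognized via \eqref{integralrep} as $D_{-3/2}(-\beta)$ and $D_{-1/2}(-\beta)$, and the recurrence \eqref{recurrenceDnu1} brings in $D_{1/2}$. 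The implicit-function step is then the same as yours.

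What each approach buys: the paper's integral-representation method is more elementary and gives explicit, easily controlled remainders, and it neatly sidesteps the difficulty you flag yourself---namely that for $a=-\tfrac12$ the representation \eqref{eq:integralrepU} fails and must be regularized. Your ODE/gauge approach is more structural and makes the appearance of the parabolic cylinder equation transparent at the level of the differential equation, but the rigorous control of the remainders (the Olver machinery or a matched-asymptotics argument in the transition zone) is heavier than the Laplace estimates the paper uses. Your identification of the branch $D_{1/2}(y)$ and of the constant term $(\alpha^2+2)/3$ is correct and agrees with the paper's computation via $\check\beta_n$.
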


\vspace{0.2cm}\noindent
First, we can see that the Proposition \ref{completeasymptU} can be written as follows. Using (\ref{ContiguousU}a)  with the parameters $a=\frac 12$ and $c=n+2$, as well as (\ref{derivU}) and \eqref{eq:intersaa}, we see that 
$\check z_n$ is the unique solution of the equation 
\begin{equation}\label{eq:as1U}
n+ \frac 12 -z =- z\ \frac{U'(\frac 12,n+1,z) }{U(\frac 12,n+1,z)}\,.
\end{equation}
We introduce as new variable:
\begin{equation} \label{eq:as2U}
\check \beta = \frac{- z+ n+\frac 12}{\sqrt{n}}
\end{equation}
and we get that \eqref{eq:as1U} is equivalent to:

\begin{equation}\label{eq:as3U}
\check\Psi_n(\check \beta):=\check \beta\,  (1 - \frac{1}{\sqrt{n} } \check \theta_n(\check \beta)) + (1 + \frac{1}{2n})\;  \check \theta_n(\check \beta)=0\,,
\end{equation}
where
\begin{equation}\label{eq:as4U}
\check \theta_n(\check\beta) = \sqrt{n} \;  \ \frac{U'(\frac 12,n+1,z) }{U(\frac 12,n+1,z)}\,.
\end{equation}
Hence \begin{equation}\label{eq:defbetan}
\check \beta_n:= \frac{-\check z_n +n+ \frac 12}{\sqrt{n}}
\end{equation} is the unique solution of \eqref{eq:as3U} 
\begin{equation}
\check\Psi_n(\check \beta_n)=0\,,
\end{equation}
and Proposition \ref{completeasymptU} will be a consequence of  the existence of a sequence $\hat \alpha_j $ such that, as $n \rightarrow +\infty$, 
\begin{equation}\label{eq:twotermsUz} 
\check \beta_n \sim  \alpha    - \frac{2\alpha^2 +1}{6}  n^{-1/2} + \sum_{j\geq 2} \hat \alpha_j n^{- j/2}\,.
\end{equation}

\vspace{0.2cm}\noindent
Now, let us analyse  $\check \theta_n$.
Coming back to the formulas for $U(\frac 12,n+1,z)$ and $U'(\frac 12,n+1,z)$ and after a change of variable $s =\sqrt{n}\, t$ in the defining integrals, we obtain:
\begin{equation}
\check \theta_n(\beta) = -\frac{\check \sigma_n(\beta)}{\check \tau_n(\beta)}\,,
\end{equation}
where
\begin{eqnarray}
\check \sigma_n(\beta) &=& \int_0^{+\infty} e^{\big(\beta - n^{1/2}  - \frac 12 n^{-1/2} \big)s}\,s^{1/2} (1+s\,n^{-1/2})^{n-\frac 12} \ ds\,,\\
\check \tau_n(\beta) &=& \int_0^{\infty} e^{\big(\beta - n^{1/2}  - \frac 12 n^{-1/2} \big)s}\,s^{-1/2} (1+s\,n^{-1/2})^{n-\frac 12} \ ds\,.
\end{eqnarray}
We now treat the asymptotics for $\check \sigma_n$ and $\check \tau_n$ separately but focus on $\check \sigma_n$ since the proof for $\check \tau_n$ is identical.
We write the previous formula in the form:
\begin{eqnarray}
\check \sigma_n(\beta) &=& \int_0^{+\infty} e^{\big(\beta - n^{1/2}  - \frac 12 n^{-1/2} \big)s + (n-\frac 12) \log (1+s n^{-1/2} )}\,s^{1/2} \ ds \,.
\end{eqnarray}
Assuming that the critical zone is with $\beta$ bounded and $s \leq C n^{\frac 14}$, 
we  write
\begin{equation}\label{Taylor}
\big(\beta - n^{1/2}  - \frac 12 n^{-1/2} \big)s + (n-\frac 12) \log (1+s n^{-1/2} ) =  
\beta s -\frac{s^2}{2} +(\frac{s^3}{3} -s) n^{-\half} + \mathcal O(n^{-1}).
\end{equation}
and the main term is formally 
\begin{equation}
\check \sigma_n (\beta) = \int_0^{+\infty} \exp (\beta s -\frac{s^2}{2}) s^{\frac 12} \,  ds + o(1) \mbox{ as } n\rightarrow +\infty.
\end{equation}
Similarly, we formally get 
\begin{equation}
\check \tau_n (\beta) = \int_0^{+\infty} \exp (\beta s -\frac{s^2}{2}) s^{-\frac 12} \,  ds + o(1) \mbox{ as } n\rightarrow +\infty.
\end{equation}
To justify the main term and have a better control of the remainder, we first decompose the integral  from $C n^{1/2}$ to $+\infty$ and then from  $0$ to $C n^{\frac 12}$. A rough estimate shows that
\begin{equation}
\check \sigma_n(\beta) = \int_0^{Cn^{1/2}} e^{\big(\beta -n^{1/2}  - \frac 12 n^{-1/2} \big)s} \,s^{1/2} (1+s\,n^{-1/2})^{n-\frac 12} ds + \mathcal O (n^{-\infty})\,.
\end{equation}
Here we have used that
\begin{equation}  
\big(\beta - n^{1/2}  -  \frac 12 n^{-1/2} \big)s +(n-\frac 12)  \log (1+ s\,n^{-1/2})\leq  \beta s  + n (- sn^{-1/2} +  \log (1+ s\,n^{-1/2}))\,,
\end{equation}
and  the fact that there exist a constant $C>0$ such that
$$
x- \log (1+x) \leq -1\,,\, \forall x \geq C\,.
$$
For the remaining zone, we decompose the integral in two subzones from $0$ to $n^{\rho}$ and  from $n^{\rho}$ to $C n^{1/2}$ with $\rho < \frac 12$.  The new difficulty (in comparison with \cite{HN}) is to control the integral on the second subzone.

\vspace{0.2cm}
\noindent
We come back to the sign of $-x + \log (1+x)$ and for $\epsilon >0$ small enough we want to determine when the inequality $-x + \log (1+x) < - \epsilon$ holds. One can show that there exists $$x(\epsilon)\sim \sqrt {2\epsilon}$$ such that
\begin{equation}
-x + \log (1+x) < - \epsilon \,,\, \forall x > x(\epsilon)\,.
\end{equation}
We now take $\epsilon = n^{-\frac 12}$ (and $n$ large enough),  and this shows that there exists $\check  C$ such that if $s \geq \check C  n^{\frac 14}$ we have
\begin{equation}
 n (- sn^{-1/2} +  \log (1+ s\,n^{-1/2}) )\leq - n \epsilon =- n^{\frac 12}\,.
\end{equation}
It is then easy  to control the second subzone with $\rho =\frac 14$. 

\vspace{0.2cm}
\noindent
At last, the treatment of the first subzone is identical to the case of the interior of the disk and is obtained by a Taylor's expansion of $\log (1+x)$ with $x=s n^{-1/2}$.
 More precisely, we now consider
\begin{equation}
\check \sigma_n^{(1)}(\beta): = \int_0^{n^{\rho}} e^{\big(\beta - n^{1/2}  - \frac 12 n^{-1/2} \big)s}\,s^{1/2} (1+s\,n^{-1/2})^{n-\frac 12} ds\,.
\end{equation}
Using a Taylor expansion with remainder  of $\log (1+s\,n^{-1/2})$ to a sufficient high order, we get  an infinite sequence of polynomials $\check P_j$ ($j\geq 1$) such that for any $N$, there exists $p(N)$ such that 
\begin{equation}
\check \sigma_n^{(1)}(\beta) = \int_0^{n^{\rho}} e^{\beta s -\frac{s^2}{2}} s^{1/2} \Big (1 + \sum_{j=1}^{p(N)} \check P_j(s) n^{-j/2}\Big) ds + \mathcal O (n^{-N}) \,.
\end{equation}
In the last step, we see that, modulo an exponentially small error, we can integrate over $(0,+\infty)$ in order to get
\begin{equation}
\check \sigma_n(\beta)=\int_0^{+\infty} e^{\beta s -\frac{s^2}{2}} s^{1/2} \Big (1 + \sum_{j=1}^{p(N)} \check P_j(s) n^{-j/2}\Big) ds + \mathcal O (n^{-N})\,.
\end{equation}

\vspace{0.1cm}\noindent
Hence we get by integration the following  lemma: \\
\begin{lemma}
 For any $N$, there exist $p(N)$, $\check P_1(s)$    and $C^\infty$-functions $\check {\check {\sigma}}_{j}$  such that
 \begin{equation}
 \check \sigma_n(\beta)=\int_0^{+\infty} e^{\beta s -\frac{s^2}{2}} s^{1/2} ds + \Big(\int_0^{+\infty} e^{\beta s -\frac{s^2}{2}} s^{1/2} \check P_1(s)  ds\Big)\, n^{-1/2} + \sum_{j=2}^{p(N)}\check{ \check \sigma}_{j} (\beta) n^{-j/2} +  \mathcal O (n^{-N})\,.
 \end{equation}
Similarly,  for any $N$, there exist $p(N)$, $\check Q_1(s)$    and $C^\infty$-functions $\check{\check \tau}_{j}$  such that
\begin{equation}
\check  \tau_n(\beta)=\int_0^{+\infty} e^{\beta s -\frac{s^2}{2}} s^{-1/2} ds + \Big(\int_0^{+\infty} e^{\beta s -\frac{s^2}{2}} s^{-1/2} \check Q_1(s)  ds\Big) \, n^{-1/2}  + \sum_{j=2}^{p(N)} \check{\check \tau}_{j} (\beta) n^{-j/2} +  \mathcal O (n^{-N})\,.
\end{equation}
\end{lemma}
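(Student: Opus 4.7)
The plan is to organize the expansion via the standard Laplace method, starting from the cut-offs already established just above. Recall that the integrand of $\check\sigma_n(\beta)$ can be written as $e^{E_n(s,\beta)}\, s^{1/2}$, with
\[
E_n(s,\beta) = \bigl(\beta - n^{1/2} - \tfrac12 n^{-1/2}\bigr)\,s + \bigl(n - \tfrac12\bigr)\log\bigl(1+sn^{-1/2}\bigr)\,.
\]
As already shown, outside $[0,\check C n^{1/4}]$ the contribution to $\check\sigma_n(\beta)$ is of order $\mathcal O(\exp(-n^{1/2}))$, so it is enough to analyse the truncated integral $\check\sigma_n^{(1)}(\beta)$ on this interval.

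On this region, $s n^{-1/2}\leq \check C n^{-1/4}$, so the Taylor series of $\log(1+x)$ converges absolutely. Collecting powers of $n^{-1/2}$ and cancelling the $sn^{1/2}$ terms yields, for any $M$,
\[
E_n(s,\beta) = \beta s - \tfrac{s^2}{2} + \sum_{j=1}^M p_j(s)\,n^{-j/2} + R_M(s,n)\,,
\]
with $p_j$ a polynomial of degree $j+2$ (in particular $p_1(s) = -s + s^3/3$) and $|R_M(s,n)| \leq C_M s^{M+3}\, n^{-(M+1)/2}$ on $[0,\check C n^{1/4}]$. Exponentiating via the standard series $e^X = \sum_k X^k/k!$ produces polynomials $\check P_k(s)$, which are universal polynomial combinations of $p_1,\dots,p_k$, so that
\[
\exp\Bigl(\sum_{j=1}^M p_j(s)\,n^{-j/2}\Bigr) = 1 + \sum_{k=1}^{L} \check P_k(s)\,n^{-k/2} + \tilde R_{M,L}(s,n)\,,
\]
with $\check P_1 = p_1$.

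Multiplying by $e^{\beta s - s^2/2}\,s^{1/2}$, the Gaussian weight $e^{-s^2/2}$ ensures that after integration every polynomial factor is absorbed and that the integration domain may be extended from $[0,\check C n^{1/4}]$ back to $[0,+\infty)$ modulo an error $\mathcal O(\exp(-cn^{1/2}))$. Setting
\[
\check{\check\sigma}_j(\beta):= \int_0^{+\infty} e^{\beta s - s^2/2}\,s^{1/2}\,\check P_j(s)\,ds\,,
\]
and choosing $M$ and $L$ sufficiently large in terms of $N$, hence $p(N)$ large enough, gives the stated expansion with remainder $\mathcal O(n^{-N})$. Smoothness in $\beta$ follows from dominated convergence, since any $\beta$-derivative produces an extra factor of $s$ which the Gaussian tames. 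The argument for $\check\tau_n(\beta)$ is word for word the same with $s^{1/2}$ replaced by $s^{-1/2}$; the only additional remark is that $s^{-1/2}\check Q_j(s)$ is still integrable near $s=0$ against $e^{\beta s - s^2/2}$ for every polynomial $\check Q_j$.

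The main technical obstacle is that the perturbations $p_j(s)\,n^{-j/2}$ are \emph{not} pointwise small on the full Taylor region $[0,\check C n^{1/4}]$ -- for instance $p_1(s)\,n^{-1/2}$ reaches size $\mathcal O(n^{1/4})$ at the upper endpoint -- so the exponential expansion cannot be justified pointwise, only after integration against the Gaussian weight. A clean way to handle this is to introduce a further intermediate scale, say $s \leq (\log n)^2$, where the expansion is pointwise legitimate, and to dispose of $(\log n)^2 \leq s \leq \check C n^{1/4}$ by the crude Gaussian bound $e^{-s^2/2} \leq e^{-(\log n)^4/2}$, which is $\mathcal O(n^{-\infty})$. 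The same splitting controls the Taylor remainder $R_M$ and the exponentiation remainder $\tilde R_{M,L}$, and fixing the relation between $p(N)$ and $N$ is the only quantitative bookkeeping required.
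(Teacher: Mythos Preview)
Your proposal is correct and follows the same route as the paper: truncate to the region $[0,\check C n^{1/4}]$ already isolated in the preceding discussion, Taylor-expand the logarithm in the exponent, exponentiate to produce the polynomials $\check P_j$, and then extend the integration back to $(0,+\infty)$ using the Gaussian decay. Your explicit identification of the technical obstacle (that $p_1(s)\,n^{-1/2}$ is not pointwise small near $s\sim n^{1/4}$) and your cure via a further intermediate scale $(\log n)^2$ are a useful refinement; the paper treats this step more briskly, simply asserting the expansion on $[0,n^{1/4}]$ and referring to the interior case in \cite{HN} for the details.
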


\vspace{0.1cm}\noindent
Looking at the first term in the Taylor expansion in (\ref{Taylor}), we see that
\begin{equation}
\check P_1(s)= \frac{s^3}{3}-s= \check Q_1(s)\,.
\end{equation}
Notice that in comparison with \cite{HN}, we have $\check P_1 =-P_1$.

\vspace{0.4cm}
\noindent
We can deduce a first localization of $\check\beta_n$.

\vspace{0.1cm}

\begin{lemma}
	For any $\eta >0$, there exists $n_0$ such that for $n\geq n_0$, 
	\begin{equation}
		\check\beta_n \in [\alpha -\eta,\alpha + \eta]\,.
	\end{equation}
\end{lemma}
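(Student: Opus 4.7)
The plan is to prove that $\check\beta_n\to\alpha$ as $n\to+\infty$, which immediately yields the localization. The key idea is to pass to a limiting equation and, via the integral representation \eqref{integralrep} together with the recurrence \eqref{recurrenceDnu1}, identify it with a zero problem for the parabolic cylinder function $D_{1/2}$.

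\emph{Step 1 (passage to the limit).} From the preceding lemma, used uniformly in $\beta$ on compact subsets of $\R$, the leading terms in the expansions of $\check\sigma_n$ and $\check\tau_n$ give, as $n\to+\infty$,
\begin{equation*}
\check\theta_n(\beta)\ \longrightarrow\ \check\theta_\infty(\beta):=-\frac{\int_0^{+\infty}e^{\beta s-s^2/2}\,s^{1/2}\,ds}{\int_0^{+\infty}e^{\beta s-s^2/2}\,s^{-1/2}\,ds},
\end{equation*}
uniformly on compacts. Consequently $\check\Psi_n\to\check\Psi_\infty(\beta):=\beta+\check\theta_\infty(\beta)$ uniformly on compacts of $\R$.

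\emph{Step 2 (identification via $D_\mu$).} Applying \eqref{integralrep} with $\mu=-3/2$ and $\mu=-1/2$ at the argument $z=-\beta$, one recognizes
\begin{equation*}
\check\theta_\infty(\beta)=-\frac{\Gamma(3/2)}{\Gamma(1/2)}\,\frac{D_{-3/2}(-\beta)}{D_{-1/2}(-\beta)}=-\frac{1}{2}\,\frac{D_{-3/2}(-\beta)}{D_{-1/2}(-\beta)}.
\end{equation*}
The recurrence \eqref{recurrenceDnu1} applied at $\mu=-1/2$, $z=-\beta$ yields $D_{1/2}(-\beta)=-\beta D_{-1/2}(-\beta)+\tfrac12 D_{-3/2}(-\beta)$, so
\begin{equation*}
\check\Psi_\infty(\beta)=-\frac{D_{1/2}(-\beta)}{D_{-1/2}(-\beta)}.
\end{equation*}
Since the integrand in \eqref{integralrep} is strictly positive, $D_{-1/2}(-\beta)>0$ for every $\beta\in\R$; hence the positive zeros of $\check\Psi_\infty$ are in bijection with the negative zeros of $D_{1/2}$. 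By the defining property of $\alpha$ the only one is $\beta=\alpha$, and $\check\Psi_\infty$ changes sign there.

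\emph{Step 3 (conclusion by Bolzano and obstacle).} Fix $\eta\in(0,\alpha)$; then $\check\Psi_\infty(\alpha-\eta)$ and $\check\Psi_\infty(\alpha+\eta)$ have opposite signs, and by Step 1 the same holds for $\check\Psi_n(\alpha\pm\eta)$ once $n\geq n_0(\eta)$. The intermediate value theorem then produces a zero of $\check\Psi_n$ inside $(\alpha-\eta,\alpha+\eta)$. Proposition \ref{characteriztionext} guarantees that $\check\beta_n$ is the unique solution of $\check\Psi_n(\beta)=0$ corresponding to $z>0$, and the lower bound $\check\beta_n\geq -1/(2\sqrt n)$ furnished by \eqref{eq:ineq} controls it from below. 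The main technical obstacle is the \emph{upper} a priori bound excluding $\check\beta_n\to+\infty$ along a subsequence, which is needed to place $\check\beta_n$ inside a fixed compact set so that Step 1 applies: I would obtain it from the uniform Temme-type expansion \eqref{asymtUn}, combined with (\ref{ContiguousU}b) written at $a=\tfrac12$, $c=n+1$, to show that $U(-\tfrac12,n+1,z)<0$ for $z\leq n-C\sqrt n$ with $C$ large and $n$ large, hence $\check z_n\geq n-C\sqrt n$, i.e.\ $\check\beta_n\leq C+o(1)$. Granted this bound, the Bolzano argument identifies the zero of $\check\Psi_n$ in $(\alpha-\eta,\alpha+\eta)$ with $\check\beta_n$, completing the proof.
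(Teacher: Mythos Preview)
Your argument is essentially the paper's own: pass to the limit in $\check\theta_n$ via the preceding asymptotic lemma, identify $\check\Psi_\infty(\beta)=-D_{1/2}(-\beta)/D_{-1/2}(-\beta)$ through \eqref{integralrep} and the recurrence \eqref{recurrenceDnu1}, and close with the intermediate value theorem plus uniqueness from Proposition~\ref{characteriztionext}. The paper proceeds identically (it explicitly notes $D_{1/2}'(-\alpha)\neq 0$ via the ODE to guarantee the sign change, which you should also state rather than just ``changes sign there'').

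Your final ``obstacle'', however, is spurious and over-complicates matters. You do not need any a~priori confinement of $\check\beta_n$ to a compact set: Step~1 is applied only at the two \emph{fixed} points $\alpha\pm\eta$, not at $\check\beta_n$. Once the IVT produces some zero of $\check\Psi_n$ in $(\alpha-\eta,\alpha+\eta)$, uniqueness---which you have already invoked---forces that zero to \emph{be} $\check\beta_n$, since for large $n$ the interval lies in the admissible range $\beta<\sqrt n+\tfrac{1}{2\sqrt n}$ (i.e.\ $z>0$). The Temme-type bound you sketch is therefore unnecessary, and the paper does not include it; drop the obstacle paragraph and your proof matches the paper's.
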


\begin{proof}
	For fixed $\beta$, we have
	\begin{equation}
		\lim_{n\rightarrow +\infty}  \check\sigma_n(\beta)= \int_0^{+\infty} e^{\beta s -\frac{s^2}{2}} s^{1/2} ds\,,
	\end{equation}
	and
	\begin{equation}
		\lim_{n\rightarrow +\infty}  \check\tau_n(\beta)= \int_0^{+\infty} e^{\beta s -\frac{s^2}{2}} s^{-1/2} ds\,,
	\end{equation}
	This implies, 
	\begin{equation}
		\check\Phi(\beta):=\lim_{n\rightarrow +\infty}  \check\theta_n(\beta) = - \frac{\int_0^{+\infty} e^{\beta s -\frac{s^2}{2}} s^{1/2} ds}{\int_0^{+\infty} e^{\beta s -\frac{s^2}{2}} s^{-1/2} ds}\,,
	\end{equation}
	or equivalently using (\ref{integralrep}),
	\begin{equation}
		\check\Phi(\beta) = - \half \frac{D_{-3/2} (-\beta)}{D_{-1/2}(-\beta)} \,,
	\end{equation}
	(we remark that, for any  $\nu<0$, $D_{\nu} (z)$ has no real zeros since the integrand in (\ref{integralrep}) is always positive).
	
	\vspace{0.1cm}\noindent
	Now, using (\ref{recurrenceDnu1}) with $\nu =-\half$ and $z=-\beta$, we immediately get:
	\begin{equation}\label{newPhi}
		\check\Phi (\beta) = -\beta - \frac{D_\half ( -\beta)}{D_{-\half} (-\beta)}.
	\end{equation}
  Considering now $\check\Psi_n$ given in (\ref{eq:as3U}), one gets:
	\begin{equation}
		\lim_{n\rightarrow +\infty} \check\Psi_n(\beta) = \beta + \check \Phi(\beta) =  - \frac{D_\half ( -\beta)}{D_{-\half} (-\beta)}\,.
	\end{equation}
	Since  $D_\half ( -\alpha)=0$, one has $D_\half '( -\alpha)\neq 0$ since $D_\nu(z)$ satisfies the second order ordinary differential equation (\ref{ODEDnu0}). It is then clear that for $\eta >0$ small enough, and $n$ large enough we have
	\begin{equation}
		\check\Psi_n(\alpha -\eta) \check\Psi_n(\alpha +\eta) <0\,.
	\end{equation}
	Hence $\check\Psi_n$ should have a zero in this interval, which is necessarily $\check z_n$ by uniqueness.
	This achieves the proof of the lemma.
\end{proof}

\vspace{0.1cm}\noindent
In other words, we have shown that
\begin{equation}\label{twoterm}
	\lim_{n\rightarrow +\infty} \check\beta_n =\alpha\,,
\end{equation}
and as a consequence we obtain  a two-terms asymptotics for $\check z_n$.

\subsubsection{A complete asymptotic expansion.}

We refer the reader to \cite{HN} for the end of the proof since the arguments are strictly identical. Indeed, we see that $\check \Phi(\alpha) =-\Phi(\alpha)$ where $\Phi(\alpha)$ is the analogous function appearing in the interior case. 
In particular, we get: 

\begin{equation}
	\check \beta_n \sim \alpha - \frac{2\alpha^2 +1}{6} \ n^{-\half} + \sum_{j\geq 1} \check \alpha_j \ n^{-\frac {j+1}{2}}\,,
\end{equation}
which concludes the proof recalling from \eqref{eq:defbetan} that $\check z_n = n+\half - \sqrt{n} \ \check \beta_n$. \hfill $\Box$

\subsubsection{Applications.}
\vspace{0.2cm}\noindent
Now, mimicking the proofs of \cite{HN}, we can first  show:
\begin{prop}
	\begin{equation}
		\lim_{z\rightarrow +\infty} z^{-1/2} \, \check \lambda^{DN} (z) = \alpha\,.
	\end{equation}
\end{prop}

\noindent
Then, we deduce successively, exactly as in \cite{HN}, the following asymptotic expansions.

\begin{prop}\label{ecart}
	We have
	\begin{eqnarray}
		\check z_n- \check z_{n-1} &=& 1 - \frac{\alpha}{2} n^{-1/2} + \mathcal O (n^{-1})\,, \\ 
		\check\lambda_n( \check z_n)  &=&\alpha \, n^{1/2} + \frac{1- \alpha^2}{3}  + \mathcal O (n^{-1/2}) \,,\\
		\check \lambda^{DN} (z)&=& \alpha z^{1/2} + \frac{\alpha^2 +2}{6} + \mathcal O (z^{-1/2})\,.
	\end{eqnarray}
\end{prop}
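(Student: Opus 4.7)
All three asymptotic statements are consequences of the complete expansion of $\check z_n$ given in Proposition \ref{completeasymptU} combined with the (G)-formula \eqref{eq:magic} and the description of the ground state as a union of monotone pieces $\check \lambda^{DN}(z)= \check \lambda_n(z)$ on $[\check z_{n-1},\check z_n]$. The plan is to treat the three items in order, with each one feeding into the next.

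\textbf{First item.} I would start from the three-term expansion
$$
\check z_n = n - \alpha \sqrt{n} + \frac{\alpha^2+2}{3} + \mathcal O(n^{-1/2}),
$$
apply it at $n$ and $n-1$, and use $\sqrt{n}-\sqrt{n-1}=\frac{1}{2\sqrt{n}}+\mathcal O(n^{-3/2})$. Only the constant $\alpha^2+2$-terms cancel exactly and the two $\mathcal O(n^{-1/2})$ remainders combine into a single $\mathcal O(n^{-1})$ after a short bookkeeping, yielding
$$
\check z_n - \check z_{n-1} = 1 - \frac{\alpha}{2}\,n^{-1/2} + \mathcal O(n^{-1}).
$$

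\textbf{Second item.} The (G)-formula \eqref{eq:magic} gives $\check \lambda_n(\check z_n) = n+1-\check z_n$, so plugging the expansion of $\check z_n$ yields directly
$$
\check \lambda_n(\check z_n) = \alpha\,n^{1/2} + 1 - \frac{\alpha^2+2}{3} + \mathcal O(n^{-1/2}) = \alpha\,n^{1/2} + \frac{1-\alpha^2}{3} + \mathcal O(n^{-1/2}).
$$

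\textbf{Third item.} Here I would use the preceding proposition which states that on $[\check z_{n-1},\check z_n]$ one has $\check \lambda^{DN}(z)=\check \lambda_n(z)$, that $\check \lambda_n$ is increasing on this interval, and that at the endpoint $\check z_{n-1}$ the value $\check \lambda_n(\check z_{n-1}) = \check \lambda_{n-1}(\check z_{n-1}) = n - \check z_{n-1}$ by the intersection characterisation. A short computation using the expansion of $\check z_{n-1}$ shows that this endpoint value also equals $\alpha\,n^{1/2} + \frac{1-\alpha^2}{3} + \mathcal O(n^{-1/2})$, so that monotonicity sandwiches
$$
\check \lambda^{DN}(z) = \alpha\,n^{1/2} + \frac{1-\alpha^2}{3} + \mathcal O(n^{-1/2})\quad \text{uniformly for } z\in[\check z_{n-1},\check z_n].
$$
It then remains to re-express $n^{1/2}$ in terms of $z^{1/2}$. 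Inverting $z = n -\alpha\sqrt{n} + \mathcal O(1)$ gives $n^{1/2} = z^{1/2} + \frac{\alpha}{2} + \mathcal O(z^{-1/2})$, hence $\alpha n^{1/2} = \alpha z^{1/2} + \frac{\alpha^2}{2} + \mathcal O(z^{-1/2})$, and adding $\frac{1-\alpha^2}{3}$ produces the constant $\frac{\alpha^2}{2} + \frac{1-\alpha^2}{3} = \frac{\alpha^2+2}{6}$, which is precisely the claimed second term.

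\textbf{Main obstacle.} The only non-mechanical point is the uniformity of the remainders in the third item: I need that $\mathcal O(n^{-1/2})$ in the sandwiching step is uniform in $z\in[\check z_{n-1},\check z_n]$ (not merely valid at the endpoints) and that the conversion $n\leftrightarrow z$ respects this. Uniformity in $z$ follows because, on the interval $[\check z_{n-1},\check z_n]$ of length $\sim 1$, the function $\check \lambda_n$ is increasing and wedged between the two endpoint values which agree to order $\mathcal O(n^{-1/2})$; and $z^{-1/2}$ is comparable to $n^{-1/2}$ on this same interval since $z\sim n$. Once this is recorded the rest is algebra, and the three formulas of Proposition \ref{ecart} follow.
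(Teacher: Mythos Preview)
Your proposal is correct and follows essentially the same route as the paper, which simply refers back to \cite{HN}: use Proposition~\ref{completeasymptU} and the $(G)$-formula for the first two items, then the sandwich $\check\lambda_{n-1}(\check z_{n-1})\le\check\lambda^{DN}(z)\le\check\lambda_n(\check z_n)$ together with the inversion $\sqrt{n}=\sqrt{z}+\frac{\alpha}{2}+\mathcal O(z^{-1/2})$ for the third.

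One small imprecision in your first item: from the \emph{three}-term expansion $\check z_n=n-\alpha\sqrt{n}+\frac{\alpha^2+2}{3}+\mathcal O(n^{-1/2})$ alone, the difference of two generic $\mathcal O(n^{-1/2})$ remainders is still only $\mathcal O(n^{-1/2})$, not $\mathcal O(n^{-1})$. What makes the ``short bookkeeping'' work is precisely that Proposition~\ref{completeasymptU} gives a \emph{complete} expansion, so the next term is $\check\alpha_1 n^{-1/2}$ with the same coefficient $\check\alpha_1$ at level $n$ and $n-1$; then $\check\alpha_1 n^{-1/2}-\check\alpha_1(n-1)^{-1/2}=\mathcal O(n^{-3/2})$ and the claimed $\mathcal O(n^{-1})$ follows. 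You should state explicitly that you use the four-term expansion here.
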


\section{Additional  flux effects.}\label{ABsection}

\subsection{Introduction}
First, let us  consider for $\nu \in (-\half, \half]$  the (A-B) potential  in the exterior of the disk $\Omega = R^2\setminus D(0,1)$ introduced in \eqref{defpotAB}.
In this section, we would like to analyze, as a function of  the magnetic flux $\nu$ and the magnetic field $b$, the ground state of the D-to-N operator  associated with the magnetic potentiel 
\begin{equation}
	 A_{b,\nu}(x,y) := A^b(x,y) + 	A_\nu (x,y) = \left( b + \frac{\nu}{r^2}\right) (-y,x)\,.
\end{equation}
Clearly, the associated  magnetic field is constant of strengh $2b$ in $\Omega$.

\vspace{0.2cm}
\noindent
In order to define the D-to-N map denoted $\check \Lambda(b,\nu)$, we solve as previously:
\begin{equation}  \label{DirichletAB}
	\left\{
	\begin{array}{rll}
		H_{A_{b,\nu}} \  v &=&0  \  \ \rm{in}  \ \ \Omega,\\
		v_{  \vert \partial \Omega} & =& \Psi \in H^{1/2}(\partial\Omega) .
	\end{array}\right.
\end{equation}
Working with the polar coordinates $(r, \theta)$ and using the Fourier decomposition   
\begin{equation}
	v(r, \theta) = \sum_{n \in \Z} v_n (r) e^{in \theta}\ \ , \ \  \Psi(\theta) = \sum_{n \in \Z} \Psi_n e^{in \theta}, 
\end{equation}
a staightforward calculation shows that $v_n (r)$ solves the following ODE:
\begin{equation}\label{polarequationsAB}
	\left\{
	\begin{array}{ll}
		- v_n'' (r) - \frac{v_n'(r)}{r} + (br-\frac{n-\nu}{r} )^2 v_n (r)= 0   & \mbox{for} \  r >1 , \\
		v_n (1) = \Psi_n .
	\end{array}
	\right.
\end{equation}

\vspace{0.2cm}
\noindent
If the magnetic field $b$ is positive, the unique bounded solution at infinity is given by: 
\begin{equation}
	v_n (r)= c_{n,\nu} \  e^{- b r^2/2} r^{n-\nu} \ U (\frac 12,n-\nu+1, br^2)\,,
\end{equation}
where $c_{n,\nu}$ is a suitable constant.\\
 Then, we easily get for the eigenvalues of the D-to-N map $\check \Lambda(b,\nu)$:
\begin{equation}\label{explicitvpextAB}
	\check \lambda_n (b, \nu) = \nu -n+b -2b \ \frac{  U' (\frac 12,n - \nu +1, b) }{ U (\frac 12,n-\nu+1, b) }\,,
\end{equation}
or equivalently
\begin{equation}\label{explicitvpext1AB}
	\check \lambda_n (b, \nu) = \nu -n+b +b \ \frac{  U (\frac 32,n - \nu +2, b) }{ U (\frac 12,n-\nu+1, b) }\,.
\end{equation}
When the magnetic field $b$ is negative, thanks to symmetries in  (\ref{polarequationsAB}), we get immediately: 
\begin{equation}\label{symetriesAB}
	\check \lambda_n (b, \nu):=\check \lambda_{-n} (-b, -\nu)\,.
\end{equation}

\begin{figure}
	\begin{center}
		\includegraphics[width=0.48\textwidth]{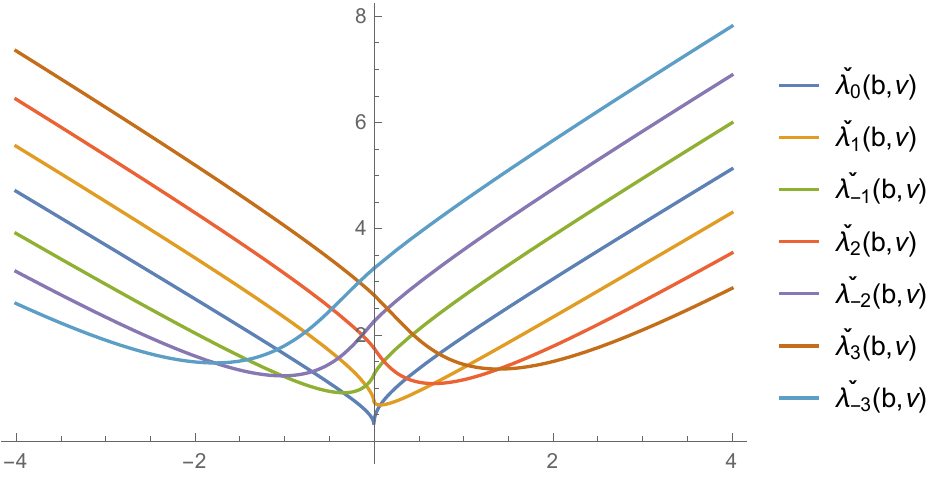}
		\includegraphics[width=0.48\textwidth]{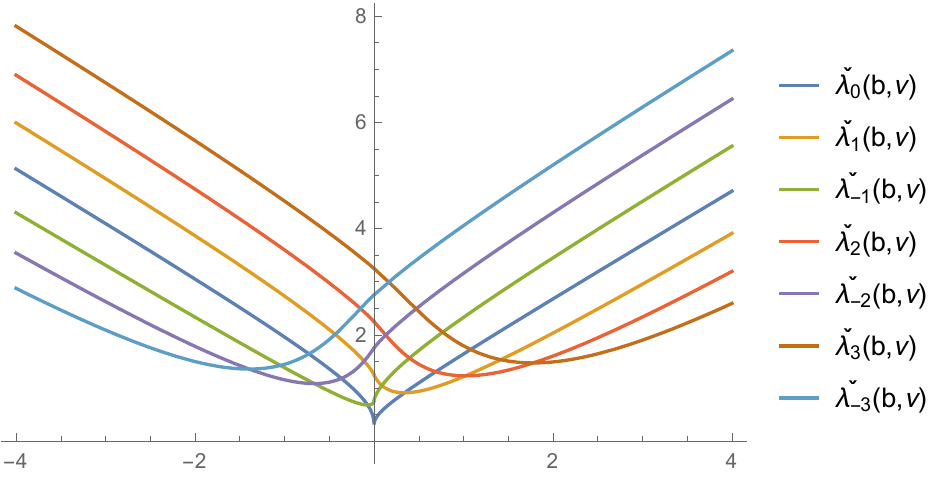} 
	\end{center}
	\caption{The magnetic Steklov eigenvalues $\check \lambda_n(b, \nu)$ for $\nu = \frac14$ (left) and for $\nu=-\frac14$  (right).}
\end{figure}

\subsection{Weak magnetic field limit}
As in Section \ref{s4}, we are interested in the weak-field limit, i.e when the magnetic field  $b \to 0^+$. We shall see that the results are sightly different. The only difference compared to the case studied previously in Section \ref{s4} 
 is that $n-\nu$ is no longer an integer. Therefore, the logarithmic terms appearing in the weak-field asymptotic expansions in Section \ref{s4}  disappear. Indeed, we recall (\cite{MOS1966}, p. 288 or \cite{SoSo}) that the following hypergeometric functions  satisfy the asymptotics as $z \to 0^+$: 
\begin{subequations}\label{asymptotUzeroAB}
\begin{eqnarray}
	U\left(a,c,z\right)&=&\frac{\Gamma(1-c)}{\Gamma\left(a+1-c\right)} + \mathcal O\left(z \right) \quad ,\quad c<0, \\
	U\left(a,c,z\right)&=&\frac{\Gamma(1-c)}{\Gamma\left(a+1-c\right)} + \mathcal O\left(z^{1-c}\right) \quad ,\quad 0<c<1, \\
	U\left(a,c,z\right)  &=& \frac{\Gamma(c-1)}{\Gamma\left(a\right)} z^{1-c}+ \mathcal O\left(1\right) \quad ,\quad 1<c<2, \\    
	U\left(a,c,z\right) &=& \frac{\Gamma(c-1)}{\Gamma\left(a\right)} z^{1-c} + \mathcal O\left(z^{2-c}\right) \quad ,\quad c>2.
\end{eqnarray}
\end{subequations}
\begin{subequations} \label{lecasAB}
Then, for  $\nu \in (-\half, \half] \backslash \{ 0\}$, and thanks to the symmetries (\ref{symetriesAB}), we get the following asymptotics for  the eigenvalues $\check \lambda_n(b, \nu)$ as $b \to 0^+\,$:
\begin{eqnarray}
	\check \lambda_{-1}(b,\nu) &=&  \nu + 1 + \mathcal O(b^{1-|\nu|}) , \\
	\check \lambda_0(b,\nu) &=& |\nu| + \mathcal O(b^{|\nu|}) , \\
	\check \lambda_1(b,\nu) &=& 1-\nu + \mathcal O(b^{1-|\nu|}) , \\
	\check \lambda_n(b,\nu) &=& |n-\nu| + \mathcal O(b) \quad , \quad |n|\geq2.
\end{eqnarray}
\end{subequations}

\vspace{0.2cm}
\noindent
 Figure~5 represents the graphs of the Steklov eigenvalues $\check \lambda_n(b,\nu)$ for $\nu=\frac14$ and $\nu = -\frac14$.
At last, we see that  $\lambda_0(0,\half) = \lambda_1(0,\half)$ is a double eigenvalue, (see Figure 6 below).

\begin{figure}
	\begin{center}
		\includegraphics[width=0.48\textwidth]{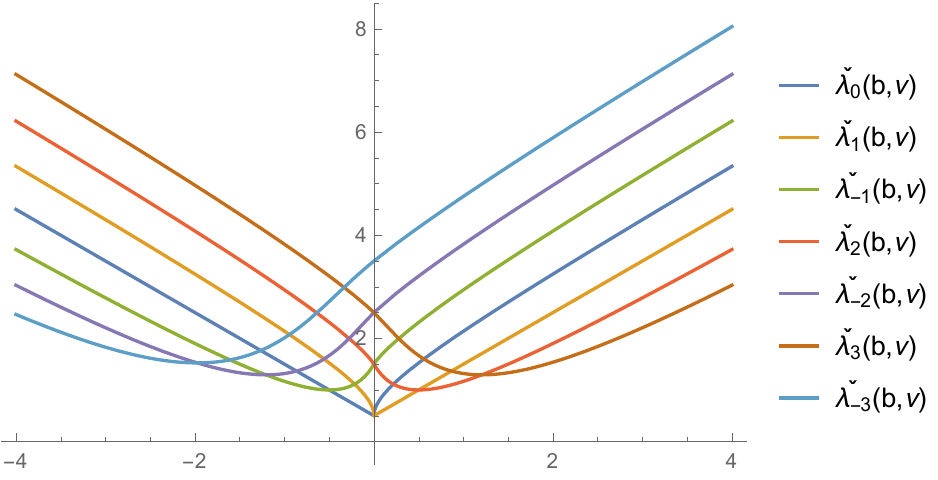}
		
	\end{center}
	\caption{The magnetic Steklov eigenvalues $\check \lambda_n(b, \nu)$ for $\nu=\half$.}
\end{figure}

\newpage
\vspace{0.5cm}
\noindent
By mimicking the proof of Theorem \ref{weaklimi}, we get:

\vspace{0.2cm}

\begin{thm}\label{weaklimitAB}
	For any $b>0$ and any $\nu \in (-\half, \half] \backslash \{0\}$,  $\check \Lambda(b,\nu)-\widehat \Lambda(\nu) \in {\cal{B}}(L^2(S^{1}))$ and we have
	\begin{equation}
		|| \check \Lambda(b,\nu)-\widehat \Lambda(\nu)||_{ {\cal{B}}(L^2(S^{1})) } = \mathcal O(b^{|\nu|}) \mbox{ as } b \to 0^+\,.
	\end{equation}
\end{thm}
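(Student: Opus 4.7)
The plan closely follows the proof of Theorem \ref{weaklimi}, with the parameter $n$ replaced by $n-\nu$ and with the correct leading singularities near $z=0$ read off from (\ref{asymptotUzeroAB}). Since $\check\Lambda(b,\nu)$ and $\widehat\Lambda(\nu)$ are simultaneously diagonalized in the Fourier basis $\{e^{in\theta}\}_{n\in\mathbb Z}$ of $L^2(S^1)$, the operator norm on $L^2(S^1)$ reduces to
\[
\|\check\Lambda(b,\nu)-\widehat\Lambda(\nu)\|_{\mathcal B(L^2(S^1))}=\sup_{n\in\mathbb Z}\bigl|\check\lambda_n(b,\nu)-|n-\nu|\bigr|.
\]
By the gauge invariance (which allows $\nu\in(-\half,\half]$) combined with the symmetry (\ref{symetriesAB}), I would reduce once and for all to $\nu\in(0,\half]$; then I only need to control $\check\lambda_n(b,\nu)-|n-\nu|$ for all $n\in\mathbb Z$.

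For each fixed value of $n\in\{-1,0,1\}$ and each fixed $|n|\geq 2$, the bound is exactly the content of (\ref{lecasAB}), obtained by plugging (\ref{asymptotUzeroAB}) into the explicit formula (\ref{explicitvpext1AB}). Since $|\nu|\in(0,\half]$, one has $b^{1-|\nu|}=o(b^{|\nu|})$ never, but $b^{1-|\nu|}\leq b^{|\nu|}$ as $b\to 0^+$ because $1-|\nu|\geq |\nu|$; hence in all of these finitely many cases, the error is of size $\mathcal O(b^{|\nu|})$ as required.

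The main obstacle is obtaining a uniform-in-$n$ estimate for large $|n|$. I would adapt the uniform Laplace-method asymptotics (\ref{asymtUn}) used in the $\nu=0$ case by replacing $n$ with the non-integer parameter $n-\nu$. The derivation in \cite{Te2} depends smoothly on the second parameter $c=n-\nu+1$ of $U$, since it is based on the integral representation (\ref{eq:integralrepU}) where $\nu$ enters as a mere shift. This yields the analog of (\ref{asymtUn}), uniformly in $z\in(0,1]$ and as $n\to+\infty$,
\[
z^{n-\nu}\,U(\tfrac12,n-\nu+1,z)=\sqrt{2}\,(n-\nu)^{n-\nu-\frac12}\Bigl(1-\tfrac{z}{n-\nu+1}\Bigr)^{-\half}e^{z-n+\nu-1}\bigl(1+\mathcal O(\tfrac1n)\bigr).
\]
Differentiating in $z$ (again justified exactly as in the $\nu=0$ case) and inserting the resulting uniform analog of (\ref{derivlog}) into (\ref{explicitvpextAB}) at $z=b$ gives
\[
\check\lambda_n(b,\nu)-(n-\nu)=\mathcal O(b)
\]
uniformly in large $n$. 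As $|\nu|<1$, we have $b=o(b^{|\nu|})$ as $b\to 0^+$, so this is much stronger than needed. The case $n\to-\infty$ follows from (\ref{symetriesAB}) applied to the previous regime. Combining the three pieces (small positive $n$, small negative $n$, and large $|n|$) yields the desired operator-norm bound.
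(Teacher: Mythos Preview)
Your approach is exactly the one the paper intends: reduce to a uniform bound on $|\check\lambda_n(b,\nu)-|n-\nu||$, treat finitely many small $|n|$ via (\ref{asymptotUzeroAB}) and (\ref{explicitvpext1AB}) as in (\ref{lecasAB}), and handle large $|n|$ by the Laplace-method expansion (\ref{asymtUn}) with $n$ replaced by $n-\nu$. This is precisely what ``mimicking the proof of Theorem \ref{weaklimi}'' means, and your checking that $b^{1-|\nu|}=\mathcal O(b^{|\nu|})$ for $|\nu|\le\tfrac12$ is the right bookkeeping.

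One point deserves care: the symmetry (\ref{symetriesAB}) sends $(n,b,\nu)\mapsto(-n,-b,-\nu)$, so it does \emph{not} reduce the regime $n\to-\infty$ (with $b>0$) to the regime $n\to+\infty$ with the same sign of $b$; it lands you at $-b<0$, for which the explicit formula in terms of $U$ is not the one you have analyzed. The same remark applies to your preliminary reduction to $\nu>0$. This is not a serious gap, because the direct argument works without any symmetry reduction: the formula (\ref{explicitvpext1AB}) is valid for all $n\in\mathbb Z$ when $b>0$, and for $n\to-\infty$ one has $c=n-\nu+1\to-\infty$ in the integral representation (\ref{eq:integralrepU}), so $(1+t)^{c-a-1}$ is concentrated near $t=0$ and a Laplace analysis (with the substitution $t=s/|c|$) gives $U(\tfrac32,c+1,b)/U(\tfrac12,c,b)=\mathcal O(|c|^{-1})$ uniformly for $b\in(0,1]$, whence $\check\lambda_n(b,\nu)-|n-\nu|=\mathcal O(b)$ uniformly. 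The paper's own proof of Theorem \ref{weaklimi} is equally terse on this point (``we restrict ourselves to $n\ge0$''), so your write-up matches the paper's level of detail; just replace the two symmetry invocations by the direct argument above.
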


\subsection{Diamagnetism and strong magnetic field limit}
\vspace{0.2cm}
\noindent
Finally, we can prove the strong diamagnetism and we get the same asymptotic expansions of the ground state $\check \lambda^{DN}(b,\nu)$ of the D-to-N map  $\check \Lambda(b,\nu)$  exactly as before, (i.e when $\nu=0$). Indeed, in the previous sections, we do not use that $n$ was an integer. So writing our previous results with $n-\nu$ instead of $n$, all our propositions and theorems are  the same, except  Proposition \ref{completeasymptU} for the asymptotics of the intersection point denoted $\check z_n (\nu)$. We can easily prove:

\begin{prop}\label{completeasymptUflux}
	For any fixed $\nu \in (-\half, \half]$, $\check z_n (\nu)$ admits the asymptotics as $n\rightarrow +\infty$,
	\begin{equation}\label{eq:twoterms} 
		\check z_n (\nu) \sim (\n) - \alpha  \sqrt{\n}  +    \frac{\alpha^2 +2}{3} + \sum_{j\geq 1}\check  \alpha_j  \ (\n)^{-\frac j2}\,,
	\end{equation}
	where the $\check \alpha_j$ are independent of $\nu$.
\end{prop}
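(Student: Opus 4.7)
The plan is to re-run the argument of Proposition \ref{completeasymptU} verbatim, with the integer $n$ replaced everywhere by the real parameter $m := n-\nu$. Nothing in the derivation of the characterization $U(-\tfrac12, n+1, \check z_n)=0$, nor in the subsequent Laplace-method analysis of the functions $\check \sigma_n$ and $\check \tau_n$, actually used that $n \in \mathbb{Z}$: the Pochhammer symbols, the contiguous relations \eqref{ContiguousU}, the derivative formula \eqref{derivU}, and the integral representation \eqref{eq:integralrepU} are all valid for any positive real value of the relevant parameter. Since the ODE \eqref{polarequationsAB} is obtained from \eqref{polarequationsext} by the substitution $n \mapsto m = n-\nu$, the characterization of the intersection point becomes
\begin{equation*}
U(-\tfrac12, m+1, \check z_n(\nu)) = 0\,,
\end{equation*}
and the eigenvalue formula \eqref{explicitvpextAB} continues to fit the scheme used to derive the $(G)$-formula, so the same reasoning applies.

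First, I would introduce the change of variable
\begin{equation*}
\check \beta := \frac{-z + m + \tfrac12}{\sqrt{m}}\,, \qquad \check \beta_n(\nu) := \frac{-\check z_n(\nu) + m + \tfrac12}{\sqrt{m}}\,,
\end{equation*}
so that, exactly as in \eqref{eq:as3U}, the intersection condition is rewritten as $\check \Psi_m(\check \beta_n(\nu)) = 0$ where
\begin{equation*}
\check \Psi_m(\check\beta) = \check \beta \bigl(1 - m^{-1/2} \check\theta_m(\check \beta)\bigr) + \bigl(1 + \tfrac{1}{2m}\bigr)\check\theta_m(\check \beta)\,,
\end{equation*}
and $\check\theta_m(\check \beta) = -\check \sigma_m(\check\beta)/\check\tau_m(\check\beta)$ with the integrals $\check \sigma_m, \check \tau_m$ defined as in Section \ref{s4} but with $n$ replaced by the real number $m$.

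Second, I would run the same three-zone decomposition of the integrals (from $0$ to $m^{1/4}$, from $m^{1/4}$ to $Cm^{1/2}$, and from $Cm^{1/2}$ to $+\infty$). All three estimates used only the size of $m$, not its integrality: the tail control rests on $x - \log(1+x) \geq \text{const}$ for $x \geq C$, the middle-zone control on the refined estimate $-x+\log(1+x) < -\epsilon$ for $x > \sqrt{2\epsilon}$, and the small-zone part on a Taylor expansion of $\log(1 + s m^{-1/2})$. The resulting formal series for $\check \sigma_m$ and $\check \tau_m$, and hence for $\check \theta_m$ and $\check \Psi_m$, have exactly the same coefficients $\check P_j, \check Q_j$ as in the $\nu=0$ case, because these coefficients are Taylor coefficients of $\log(1+x)$ evaluated independently of $\nu$.

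Third, I would deduce that the limiting equation $\beta + \check \Phi(\beta) = 0$ and its derivative at the root are identical to those of the $\nu=0$ case, so the leading root is again $\alpha$ with $\check\Phi'(\alpha) = -\tfrac12$. Applying the same implicit-function/iteration scheme as in the proof of Proposition \ref{completeasymptU} then yields
\begin{equation*}
\check \beta_n(\nu) \sim \alpha - \frac{2\alpha^2 + 1}{6}\, m^{-1/2} + \sum_{j \geq 1}\hat \alpha_j\, m^{-(j+1)/2}\,,
\end{equation*}
with the same coefficients $\hat \alpha_j$ as before, and translating back to $\check z_n(\nu) = m + \tfrac12 - \sqrt{m}\,\check \beta_n(\nu)$ gives the announced expansion with $\nu$-independent $\check \alpha_j$'s.

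The only mild obstacle, compared to the integer case, is to make sure all the uniformity estimates in the Laplace analysis are genuinely uniform with respect to the real parameter $m$ going to infinity (not just along integer values), and that the condition $m > 0$ used to apply the integral representation \eqref{eq:integralrepU} holds eventually; both points are automatic since $\nu \in (-\tfrac12, \tfrac12]$ is fixed and bounded, so $m = n - \nu \to +\infty$ with $m > 0$ for all $n$ sufficiently large.
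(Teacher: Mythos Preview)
Your proposal is correct and follows exactly the paper's own approach: the authors simply remark that nothing in the derivation of Proposition \ref{completeasymptU} used the integrality of $n$, so one may replace $n$ by $n-\nu$ throughout to obtain the stated expansion with $\nu$-independent coefficients. Your write-up is in fact more detailed than the paper's, which treats the extension as immediate.
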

Of course, this implies 
\begin{equation}\label{eq:twotermsbis} 
		\check z_n (\nu) \sim n  - \alpha  \sqrt{n}  +    \frac{\alpha^2 +2}{3} -\nu + \sum_{j\geq 1}\check  \alpha_j (\nu)   n^{-\frac j2}\,,
	\end{equation}
but this will not be useful for the next computation.
\vspace{0.2cm}
\noindent

\vspace{0.2cm}\noindent
Now, thanks to the (G) formula (with $n$ replaced by $n-\nu$), we first get:
\begin{equation}\label{lambdan}
	\check \lambda_n(\check z_n (\nu), \nu)  =\alpha (\n)^{1/2} + \frac{1-\alpha^2}{3}  
	-   \frac{\check \alpha_1 }{\sqrt{\n}} + \mathcal O ((\n)^{-1}).
\end{equation}
Using Proposition \ref{completeasymptUflux}, a straightforward calculation gives:
\begin{equation}\label{difference}
 \check z_n (\nu)-\check z_{n-1} (\nu)= 1 - \frac{\alpha}{2\sqrt{\n}}  +  \mathcal O ((\n)^{-\frac32})\,. 
\end{equation}
\vspace{0.1cm}\noindent
Using again Proposition \ref{completeasymptUflux}, we get:
\begin{equation}\label{racine}
		\sqrt{\n} =   \sqrt{\check z_n(\nu)} + \frac{\alpha}{2}  
		- \frac{\alpha^2 +8}{24 \sqrt{\check z_n(\nu) }}
		+ \mathcal O ( \frac{1}{\check z_n(\nu)}).
\end{equation}
Then, plugging (\ref{racine}) into (\ref{lambdan}), we get:
\begin{equation}\label{asymptflux}
		\check \lambda_n(\check z_n (\nu), \nu)  = \alpha \sqrt{ \check z_n(\nu)}+ \frac{\alpha^2 +2}{6} -\left(\check \alpha_1  +  \frac{\alpha (\alpha^2+8)}{24} \right) \frac{1}{\sqrt{\check z_n (\nu) }} + \mathcal O (\frac{1}{\check z_n (\nu)}).
\end{equation}

\vspace{0.2cm}\noindent
Finally, like in the proof of the interior case \cite{HN} , we get
\begin{equation}\label{asymptwoterms}
\check \lambda(z,\nu) = \alpha z^{\frac 12} + \frac{\alpha^2+ 2}{6} + \mathcal O (z^{-1/2})\,.
\end{equation}

\section{Prospective and conjectures for general domains}

The  proof given in the Section 5.3, (like an alternative proof in the spirit of \cite{HKN}), does not permit to control the dependence in $\nu$ of the remainder in (\ref{asymptwoterms}). 
Our guess is that the third term of these asymptotics is oscillating as it is the case (see \cite{FH2,FPS0,HeLe}) for the Neumann eigenvalue asymptotics for the disk where we have:

\begin{thm}
With, for $m\in {\mathbb Z}$, $b>0$, $
\delta(m,b) = m - b - \sqrt{2\Theta_0 b}\,,
$
there exist (computable) constants $C_0, \ C_1,\ \delta_0 \in {\mathbb R}$ such that if
\begin{align}
\Delta_b = \inf_{m \in {\mathbb Z}} | \delta(m,b) - \delta_0|\;,
\end{align}
then, as  $b \to + \infty$,  the first magnetic Neumann eigenvalue satisfies 
\begin{align}\label{eq:asympFoHe}
\lambda^{Ne}(b) = 2\Theta_0 b -  C_1 \sqrt{2b} +
3 |C_1| \sqrt{\Theta_0} \big( \Delta_b^2 + C_0\big) + {\mathcal O}(b^{-\frac{1}{2}})\;.
\end{align}
Here, $\Theta_0$ is the so-called De Gennes constant, which is equal  (see \cite{Bo}) approximately  to 
\begin{equation}\label{eq:2.23}
	\Theta_0\approx 0.5901061249 ....
\end{equation}
\end{thm}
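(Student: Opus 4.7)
The plan is to exploit the rotational invariance of the disk via Fourier decomposition and then to carry out a semiclassical analysis of each one-dimensional fiber. Writing $u(r,\theta)=\sum_{m\in\mathbb Z} u_m(r)\,e^{im\theta}$, the Neumann magnetic Laplacian acts in the sector of angular momentum $m$ as
\begin{equation*}
H_m(b)u=-u''-\tfrac{u'}{r}+\Big(br-\tfrac{m}{r}\Big)^{\!2} u
\end{equation*}
on $L^2\big((0,1),\,r\,dr\big)$ with Neumann condition at $r=1$. Denoting by $\lambda^{Ne}_m(b)$ the lowest eigenvalue of $H_m(b)$, we have $\lambda^{Ne}(b)=\inf_{m\in\mathbb Z}\lambda^{Ne}_m(b)$, so it suffices to analyze each $\lambda^{Ne}_m(b)$ uniformly for $m$ in the relevant window around $b$.

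For large $b$, the ground state of $H_m(b)$ localizes in a boundary layer of width $1/\sqrt{b}$ near $r=1$. Introducing $\tau=\sqrt{2b}\,(1-r)$ and $\xi=(b-m)/\sqrt{2b}$, and expanding $(br-m/r)^2/(2b)$ in powers of $1/\sqrt{b}$ yields, to leading order, the De Gennes operator
\begin{equation*}
\mathcal{L}(\xi)=-\partial_\tau^2+(\tau-\xi)^2\quad\text{on }(0,+\infty),\ \text{Neumann at }\tau=0,
\end{equation*}
whose lowest eigenvalue $\Theta(\xi)$ achieves its minimum $\Theta_0$ at $\xi=\sqrt{\Theta_0}$. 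Building quasi-modes as formal series $u_m(r)\sim\sum_{k\geq0}\phi_k(\tau)\,b^{-k/2}$, solving the resulting cascade of transport equations, and matching with an Agmon-type lower bound, one gets
\begin{equation*}
\lambda^{Ne}_m(b)=2b\,\Theta(\xi)-C_1\sqrt{2b}+\mathcal{O}(1),
\end{equation*}
valid uniformly for $\xi$ bounded, where $C_1$ is a curvature-weighted integral of the De Gennes ground state.

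To extract the $\mathcal{O}(1)$ term precisely, I would push the expansion to next order and Taylor expand $\Theta$ around its minimum, writing $\Theta(\xi)=\Theta_0+\tfrac12\Theta''(\sqrt{\Theta_0})(\xi-\sqrt{\Theta_0})^2+\mathcal{O}((\xi-\sqrt{\Theta_0})^4)$. Rewriting $\xi-\sqrt{\Theta_0}$ in terms of $\delta(m,b)=m-b-\sqrt{2\Theta_0 b}$, shifted by a constant $\delta_0$ that absorbs the correction coming from the next-order semiclassical terms, the infimum over $m\in\mathbb Z$ replaces the continuous minimizer by the nearest integer, thus producing the oscillating quantity $\Delta_b$ and a residual constant $C_0$. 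The prefactor $3|C_1|\sqrt{\Theta_0}$ is then identified from the explicit formula for $\Theta''(\sqrt{\Theta_0})$ in terms of the De Gennes data. The main obstacle will be obtaining these expansions \emph{uniformly} in $m$ across a window of size $\sqrt{b}$ around the optimum, which requires sharp Agmon exponential weights with $b$-dependent scale together with uniform resolvent estimates for $\mathcal{L}(\xi)$ capturing the spectral gap above $\Theta(\xi)$ as $\xi$ ranges near $\sqrt{\Theta_0}$, so that the infimum over $m\in\mathbb{Z}$ commutes with the asymptotic expansion.
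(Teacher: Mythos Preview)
The paper does not prove this theorem: it is quoted in Section~6 as a known result from \cite{FH2,FPS0,HeLe} to motivate the conjecture that the third term in the D-to-N asymptotics should likewise oscillate. There is therefore no proof in the paper to compare your attempt against.

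That said, your outline is the right strategy and matches the approach in the cited references: Fourier decomposition into the fibers $H_m(b)$, boundary-layer rescaling leading to the De Gennes operator $\mathcal L(\xi)$, Taylor expansion of $\Theta(\xi)$ about $\xi=\sqrt{\Theta_0}$, and then the discreteness of $m$ producing the oscillating term $\Delta_b$. What you have written, however, is a roadmap rather than a proof. Two concrete gaps: (i) you state but do not establish the uniformity in $m$ over a window of width $\sqrt b$, which is precisely where the work lies---one needs either the special-function analysis of the crossing points (as in \cite{HeLe}) or uniform Agmon estimates with a careful two-scale construction (as in \cite{FH2}); (ii) the identification of the prefactor $3|C_1|\sqrt{\Theta_0}$ requires the explicit relation $\Theta''(\sqrt{\Theta_0})=3|C_1|\sqrt{\Theta_0}/\Theta_0$ (equivalently the Dauge--Helffer formula linking $\Theta''$ to the boundary value of the De Gennes ground state), which you invoke without justification. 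To turn this into a proof you would have to carry out at least one of those two programs in full.
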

Notice that a similar statement is proven for the exterior of the disk in \cite{FPS0} (Theorems 1.10 and 4.1) with,  in  \eqref{eq:asympFoHe},  $\delta_0, C_0,C_1$ and $\Delta_b$ replaced by  some $\hat \delta_0, \hat C_0,\hat C_1,  \hat \Delta_{b,\nu}$, with $$\hat C_1=-C_1\,,\,
 \hat \Delta_{b,\nu}=  \inf_{m \in {\mathbb Z}} | \hat \delta(m-\nu,b) - \hat \delta_0|\,, \mbox{ and } 
\hat{\delta}(m,b) = m-b+\sqrt{2\Theta_0 b}\,.$$

 In \cite{HKN}, we prove with A. Kachmar: 

\begin{thm}
\label{thm:main-2D}
 	Let $\Omega$ be a regular domain in $\mathbb R^2$ and $A$ be a magnetic potential with constant magnetic field with norm $1$.  Then the ground state energy of the D-to-N map  $\Lambda^{DN}_{bA}$ satisfies as $b \rightarrow +\infty$
 \begin{equation*}
\lambda^{\rm DN}(b A,\Omega) =  \hat \alpha b^{\frac 12}  -   \frac{\hat \alpha^2 +1}{3}  \max_{x\in \partial \Omega} \kappa_x \, + o(1)\,,
 \end{equation*}
 where $\kappa_x$ denotes the curvature at $x$ and $\hat \alpha =\alpha/\sqrt{2}$.
\end{thm}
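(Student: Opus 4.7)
The plan is to adapt the semiclassical strategy developed by Fournais and Helffer for the magnetic Neumann Laplacian to the Dirichlet-to-Neumann setting. The natural length scale as $b\to+\infty$ is the magnetic length $b^{-1/2}$, and the working heuristic is that the minimizer of the D-to-N quadratic form
\[
q_b(f)=\int_\Omega |(D-bA)u|^2\,dx,\qquad u=\text{magnetic harmonic extension of }f,
\]
concentrates in a boundary layer of width $b^{-1/2}$, so the problem should reduce to a half-plane model with constant magnetic field of unit strength.

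\textbf{Main steps.} First, I would establish magnetic Agmon-type exponential decay estimates showing that the minimizer $u$ is concentrated near $\partial\Omega$ at scale $b^{-1/2}$, which justifies working in tubular coordinates $(s,t)$ near $\partial \Omega$. After rescaling $t=b^{-1/2}\tau$ (and using an appropriate tangential scale $s=b^{-\rho}\sigma$ with $\rho\in(0,1/2)$), the equation $(D-bA)^2 u=0$ becomes, to leading order, the constant unit-field magnetic harmonic equation on the half-plane $\{\tau>0\}$. For this model, a partial Fourier transform in the tangential direction diagonalizes the D-to-N map fiberwise: on each fiber one solves a Weber equation whose decaying solution is a parabolic cylinder function $D_{-1/2}$. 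Minimizing the resulting eigenvalue $\mu(\xi)$ over the fiber parameter $\xi$ leads exactly to the characterization $D_{1/2}(-\alpha)=0$, producing the leading coefficient $\hat\alpha=\alpha/\sqrt{2}$ (the $\sqrt{2}$ reflecting that $A^b=b(-y,x)$ carries field strength $2b$ rather than $1$, consistent with Theorem \ref{mainHN}).

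\textbf{Curvature correction and matching.} The second-order term is obtained by expanding the tubular metric $d\tau^2+(1-\kappa(s)t)^2 ds^2$ and the Coulomb-gauge magnetic potential to the next order in $\kappa(s)t$. Treating the curvature contribution as a perturbation of the model fiber operator and computing its first-order correction produces the universal coefficient $-(\hat\alpha^2+1)/3$, independent of the chosen boundary point; optimizing over $x_0\in \partial\Omega$ then selects $\max_{x\in\partial\Omega}\kappa_x$. The upper bound follows from a quasi-mode built out of the model ground state multiplied by a cutoff localized at scale $b^{-\rho}$ around a maximizer of $\kappa$, while the lower bound uses an IMS partition of unity on $\partial\Omega$ at the same scale, each localized piece being compared to the model plus its curvature correction via the trace inequality and a min-max argument.

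\textbf{Main obstacle.} In contrast to the magnetic Neumann problem, the spectral quantity lives on $\partial\Omega$ while the quadratic form $q_b$ is a bulk integral. The hard part is therefore to transfer the bulk semiclassical localization to trace space with an $o(1)$ remainder: one must combine sharp control of the magnetic harmonic extension (to propagate Agmon decay from the boundary into the interior) with estimates on how tangential cutoffs on $\partial\Omega$ interact with the normal decay. Controlling these cross-terms uniformly in $b$ is the technical heart of the proof, and the disk computations of Section~\ref{s4} serve as a consistency check that the same constants $\alpha$ and $(\alpha^2+2)/6$ arise, independently of the flux $\nu$.
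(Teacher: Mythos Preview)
The paper does not actually prove Theorem~\ref{thm:main-2D}: it is quoted in Section~6 as a result established in the companion paper \cite{HKN} (``In \cite{HKN}, we prove with A.~Kachmar:~\dots''). There is therefore no proof in this paper against which to compare your proposal.

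That said, your outline is the standard semiclassical scheme one expects \cite{HKN} to follow, and it is internally coherent: boundary localization at scale $b^{-1/2}$, reduction in tubular coordinates to the half-plane model whose fiberwise D-to-N eigenvalue is minimized at the zero of $D_{1/2}$, curvature expansion of the metric factor $(1-\kappa t)$ yielding the $-(\hat\alpha^2+1)/3$ correction, and matching upper/lower bounds via a quasi-mode at a curvature maximizer and an IMS partition. Your consistency check with the disk is correct: with $\hat\alpha=\alpha/\sqrt 2$ one has $(\hat\alpha^2+1)/3=(\alpha^2+2)/6$, and since the interior disk has $\kappa\equiv 1$ while the exterior has $\kappa\equiv -1$, the signs in Theorem~\ref{mainHN} and Proposition~\ref{ecart} match the general formula. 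The point you flag as the main obstacle --- transferring bulk Agmon decay to sharp trace-space estimates with $o(1)$ remainder --- is indeed where the work lies, and your sketch does not yet indicate \emph{how} this is done (e.g.\ which trace/extension inequality, with which $b$-dependence, controls the commutator of tangential cutoffs with the magnetic harmonic extension). As written, the proposal is a plausible roadmap rather than a proof; to complete it you would need to make that step quantitative.
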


The extension to the exterior problem is rather immediate, by a  small variation of the proof given in \cite{HKN}, 

  \begin{thm}\label{prop:lb-2D-B=1}
Let $\Omega$ be a regular domain in $\mathbb R^2$ with compact boundary $\partial \Omega$ and $A$ be a vector potential with a positive magnetic field $B={\rm curl} A$  with uniform lower bound.
    Suppose that $B$  is $C^1$  on $\overline{\Omega}$ and that $B=1$  on a neighborhood of $\partial\Omega$. Then, the ground state energy of the D-to-N map $\Lambda_{b A}$ satisfies
 \begin{equation*}
 \lambda^{\rm DN}(b A,\Omega) =  \hat\alpha\, b^{\frac{1}{2}} -  \frac{\hat \alpha^2+1}{3}\, \max_{x\in\partial\Omega} \kappa_x +\mathcal O(b^{-1/6})\quad , \quad b\to+\infty\,,
 \end{equation*}
\end{thm}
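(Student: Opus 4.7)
The plan is to follow almost verbatim the strategy of Theorem \ref{thm:main-2D} from \cite{HKN}, since the analysis in the strong-field regime is local near $\partial\Omega$: the ground state of the D-to-N map lives in a boundary layer of width $\mathcal O(b^{-1/2})$, so only the geometry of a tubular neighborhood of $\partial\Omega$ (which is compact by assumption) and the value $B\equiv 1$ there enter the leading and subleading terms. The verification of Theorem \ref{prop:lb-2D-B=1} therefore splits into an upper bound by quasimodes and a matching lower bound by localization, with only the treatment at infinity differing from the interior case.

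For the upper bound, I would fix $x_0\in\partial\Omega$ realizing $\max_{x\in\partial\Omega} \kappa_x$ and work in boundary coordinates $(s,t)$ adapted to $\partial\Omega$, with $t$ the signed distance and $t>0$ pointing into $\Omega$ (the only sign change relative to the interior case). In these coordinates, the magnetic Laplacian takes exactly the model form used in \cite{HKN} up to terms of order $t$ and $t^2$, since $B\equiv 1$ near $\partial\Omega$. I would take a trial boundary datum localized at $s=0$ on a scale $b^{-1/4}$ and build the associated harmonic extension as a tensor product of a Gaussian in $s$ with a rescaled parabolic cylinder function $D_{1/2}(\sqrt{2b}\,t - \alpha)$ in the normal variable, multiplied by a gauge factor; the cut-off to a small tubular neighborhood loses only exponentially small terms thanks to the decay \eqref{asymptDnu}. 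Plugging into the Rayleigh quotient \eqref{DtNweak} and expanding the metric and magnetic potential in powers of $t$ reproduces the two-term expansion, exactly as for the disk, via the same identities involving the zero $-\alpha$ of $D_{1/2}$.

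For the lower bound I would introduce a partition of unity $(\chi_j)$ with $\chi_0$ supported in $\{d(x,\partial\Omega)\geq \delta\}$ and $(\chi_j)_{j\geq 1}$ supported in small boundary balls. The IMS localization formula applied inside the quadratic form \eqref{DtNweak} produces error terms $\mathcal O(b^{-\infty})$ on the boundary layer scale. In each boundary piece I apply the same tangential Fourier/pseudo-differential reduction as in \cite{HKN}, yielding a family of one-dimensional operators on $(0,+\infty)$ whose ground state is computed through $D_{1/2}$ and whose subprincipal contribution is the curvature term $-\frac{\hat\alpha^2+1}{3}\kappa_{x_0}$. The interior piece $\chi_0$ is controlled by the positivity of $H_A^D$ far from the boundary: by Persson's lemma together with Hypothesis \ref{eq:H00} and the lower bound on $B$ one gets $\inf\sigma(H_A^D|_{\{d(\cdot,\partial\Omega)\geq\delta\}})\geq c\, b$, so its contribution to the D-to-N quadratic form dominates the $b^{1/2}$ scale and is absorbed.

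The main obstacle, and the only genuine novelty compared to \cite{HKN}, is the unbounded character of $\Omega$: one has to make sure that the harmonic extension used to define $\Lambda_{bA}$ and its quadratic form is well controlled at infinity, and that the localization near $\partial\Omega$ does not interact with the tail of $\Omega$. This is handled by combining the strict positivity $\inf\sigma(H_A^D)>0$ proved in the general lemma at the end of Section 2 with Agmon-type exponential decay estimates for the solution of \eqref{DirichletAB}, which force the minimizer to be concentrated in a neighborhood of $\partial\Omega$ of size $\mathcal O(b^{-1/2})$. Once this a priori decay is established, every remaining step is a transcription of the interior argument with the sign of $t$ reversed, yielding the claimed asymptotic expansion with remainder $\mathcal O(b^{-1/6})$.
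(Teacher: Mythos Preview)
Your proposal is correct and follows exactly the approach the paper indicates: the paper itself offers no detailed proof, stating only that ``the extension to the exterior problem is rather immediate, by a small variation of the proof given in \cite{HKN}'', and your outline is precisely such a variation, with the correct identification that the only new ingredient is controlling the harmonic extension at infinity via the positivity results of Section~2 (Persson's lemma and the essential-spectrum bound under Hypothesis~\ref{eq:H00}) together with Agmon-type decay. Your sketch is in fact more explicit than the paper's one-line justification, and the localization of the analysis to a compact tubular neighborhood of $\partial\Omega$ where $B\equiv 1$ is exactly what makes the interior argument of \cite{HKN} transplant directly.
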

Notice that the two first terms in the expansion depend only on the magnetic field and not on the generating magnetic potential. If $\tilde A$ and $A$ are two vector potentials with same magnetic field and satisfying the Coulomb gauge condition, it would be
 interesting to see, in the constant curvature case, how the remainder depends on the circulation of the tangential component of $A-\tilde A$ along $\partial \Omega$.

Finally the weak-field limit for general unbounded domains   seems completely  open (see nevertheless \cite{KLPS}).

\vspace{0.5cm}

\noindent \footnotesize{
	
	\noindent Laboratoire de Math\'ematiques Jean Leray, UMR CNRS 6629. Nantes Universit\'e  F-44000 Nantes  \\
	\emph{Email adress}: Bernard.Helffer@univ-nantes.fr \\

	\noindent Laboratoire de Math\'ematiques Jean Leray, UMR CNRS 6629. Nantes Universit\'e  F-44000 Nantes \\
	\emph{Email adress}: francois.nicoleau@univ-nantes.fr \\

\end{document}